\def\today{\ifcase\month\or
  January\or February\or March\or April\or May\or June\or
  July\or August\or September\or October\or November\or December\fi
  \space\number\day, \number\year}
 \newtheorem{theorem}{Theorem}[section]
 \newtheorem{lemma}[theorem]{Lemma}
 \newtheorem{proposition}[theorem]{Proposition}
 \theoremstyle{definition}
 \theoremstyle{remark}
 \newcommand{\mc}{\mathcal}
\newcommand{\sumtwo}{\operatorname*{\sum\sum}}
\newcommand{\sumthree}{\operatorname*{\sum\sum\sum}}
\newcommand{\summany}{\operatorname*{\sum...\sum}}
\newcommand{\m}{\frak{m}}
\newcommand{\n}{\frak{n}}
 \newcommand{\Scal}{\mathcal{S}}
 \newcommand{\C}{\mathbb{C}}
 \newcommand{\R}{\mathbb{R}}
 \newcommand{\W}{\mathcal{W}} 
\newcommand{\Imp}{\textrm{Im}}
\newcommand{\Rep}{\textrm{Re}}
\newcommand{\hW}{\widetilde{\W}}
\newcommand{\FW}{  \frac{\W (q/Q)}{ \varphi(q)} }
\newcommand{\emts}{e^{-t^2}}
\newcommand{\sumstar}{\sideset{}{^*}\sum}
\newcommand{\sumsh}{\sideset{}{^\sharp}\sum}
\newcommand{\shortmod}{\ensuremath{\negthickspace \negthickspace
\negthickspace \pmod}}
\newcommand{\smod}{\ensuremath{\negthickspace \negthickspace
\negthickspace \pmod}}
\newcommand{\es}[1]{\begin{equation}\begin{split}#1\end{split}\end{equation}}
\newcommand{\est}[1]{\begin{equation*}\begin{split}#1\end{split}\end{equation*}}
 \newcommand{\pr}[1]{\left( #1\right)}
\newcommand{\vecx}{\boldsymbol{x}}
\newcommand{\vecu}{\boldsymbol{u}}
\newcommand{\vecd}{\boldsymbol{d}}
\begin{document}

\title{$n$-level density of the low-lying zeros of primitive Dirichlet $L$-functions} 

\date{\today}

\author[V. Chandee]{Vorrapan Chandee}
\address{Mathematics Department \\ Kansas State University \\ Manhattan, KS 66503}
\email{chandee@ksu.edu}

\author[Y. Lee]{Yoonbok Lee}
\address{Department of Mathematics \\ Research Institute of Basic Sciences \\ Incheon National University \\ Incheon, 22012 \\ Korea}
\email{leeyb@inu.ac.kr, leeyb131@gmail.com}

\begin{abstract}
Katz and Sarnak conjectured that the statistics of low-lying zeros of various family of $L$-functions matched with the scaling limit of eigenvalues from the random matrix theory. In this paper we confirm this statistic for a family of primitive Dirichlet $L$-functions matches up with corresponding statistic in the random unitary ensemble, in a range that includes the off-diagonal contribution. To estimate the $n$-level density of zeros of the $L$-functions, we use the asymptotic large sieve method developed by Conrey, Iwaniec and Soundararajan. For the random matrix side, a formula from Conrey and Snaith allows us to solve the matchup problem.
\end{abstract}

\allowdisplaybreaks
\numberwithin{equation}{section}

\maketitle

\section{Introduction} \label{sec:intro}
Efforts to understand the location of zeros of the Riemann zeta function have played an important role in the development of analytic number theory. Classically, information about the horizontal distribution of these zeros yielded better understanding about the distribution of prime numbers. Moreover, Montgomery \cite{Montgomery} calculated statistics of the spacings of zeros along the vertical line; more specifically, he examined the so called pair-correlation function, 
which is a quantity roughly of the form
$$ \frac{1}{N(T)} \sum_{\substack{0 < \gamma, \gamma' \leq T}} f\left((\gamma - \gamma') \frac{\log T}{2\pi}\right),$$
where under the Riemann hypothesis (RH), $1/2 + i \gamma$ are non-trivial zeros of the Riemann zeta function, $N(T)$ is the number of zeros such that $0 < \gamma \leq T$, and $f$ is a Schwartz function on $\mathbb R$. Then assuming the Fourier transform $\hat f$ is supported in $(-1, 1)$, he showed that as $T \rightarrow \infty$
\es{ \label{Mont:f}  \frac{1}{N(T)}\sum_{\substack{0 < \gamma, \gamma' \leq T \\ \gamma \neq \gamma'}} f\left((\gamma - \gamma') \frac{\log T}{2\pi}\right)  \rightarrow  \int_{-\infty}^{\infty} f(x) W^{(2)}(x) \> dx,}
where $W^{(2)}(x) =  1- \left(\frac{\sin \pi x}{\pi x}\right)^2$. Equation \eqref{Mont:f} is expected to be true for any Schwartz function, and this is the Pair Correlation conjecture.  Dyson later pointed out to Montgomery that the factor $W^{(2)}(x)$ is the same as the distribution of the spacings of eigenvalues of the Gaussian unitary ensemble (GUE) distribution from random matrix theory, which forshadowed a great deal of work later.  Indeed, the link between the Riemann zeta function and random matrix theory has led to a better understanding of both moments and zeros of $L$-functions (see  for example \cite{KatzSarnak}, \cite{KeatingSnaith} and \cite{RudnickSarnak}).

\"{O}zl\"{u}k \cite{Ozluk} studied a $q$-analogue of Montgomery's pair correlation result under the Generalized Riemann hypothesis (GRH) for Dirichlet $L$-functions. In particular, he considered the pair correlation function of a family of Dirichlet $L$-functions averaging over characters $\chi$ modulo $q$, where $q \in [Q, 2Q]$.
The large size of the family ($\sim Q^2$) compared to the conductor ($\sim Q$) allows for an extension of the support of the Fourier transform of the test function beyond what is readily available.  In this undertaking, \"{O}zl\"{u}k dealt with the contribution of certain off diagonal terms, and he was able to succeed with the extra average over the modulus. 
Recently, the authors in joint work with Liu and Radziwi\l\l  \cite{CLLR} revisited \"Ozl\"uk's pair correlation function but averaging over primitive characters instead, using an asymptotic large sieve introduced by Conrey, Iwaniec and Soundararajan \cite{ConreyIwaniecSoundararajan}. As a result, we improved the proportion of simple zeros of primitive Dirichlet $L$-functions.

The pair correlation conjecture has been extended to $n$-level correlation of the zeros of the Riemann zeta function through random matrix theory, which studies statistics involving $n$-tuples of zeros. In support of the conjecture, Rudnick and Sarnak \cite{RudnickSarnak} proved the result for some special test functions $f$. To describe their results more precisely, assuming RH, let $1/2 + i \gamma_j$ be nontrivial zeros of the Riemann zeta function. 
Rudnick and Sarnak studied the sum of the the form
\est{ R(T; f, h)  = \summany_{ \substack{j_1,..., j_n \\ \textrm{all distinct} }} h \pr{\frac{\gamma_{j_1}}{T}} \cdots  h \pr{\frac{\gamma_{j_n}}{T}} f\pr{\gamma_{j_1} L, \dots ,\gamma_{j_n}L}, }
where $L = \frac{\log T}{2\pi}$ and $h$ is a rapidly decaying cut-off function. We define the $n$-level correlation density for the GUE model as
\es{\label{def:Wn} W^{(n)} (\vecx) := W^{(n)} ( x_1 , \dots, x_n ) := \det ( K_0 ( x_j, x_k ))_{j,k} ,}
 where
$$ K_0 ( x,y) = \frac{ \sin( \pi ( x-y))}{ \pi (x-y)} . $$   
Then assuming the condition 
$$ \mathrm{supp} \hat{f} \subset \{   |\xi_1 | + \cdots + |\xi_n| <2 \} $$
and a couple of other technical conditions omitted for now, Rudnick and Sarnak showed that
$$R(T; f, h) \sim N(T) \left(  \int_{-\infty}^{\infty} h(r)^n  \> dr \right) \int_{\mathbb R^n} f(x) W^{(n)}(\vecx) \delta\pr{\frac{1}{n}(x_1 + \cdots + x_n)} \> d\vecx,
$$
where $\delta$ is the Dirac-delta function.
This result essentially reduces to \eqref{Mont:f} when $n=2$. To deal with the sum over non-trivial zeros appearing in $R(T; f, h)$, they applied the explicit formula, which connects this sum over zeros to a sum over prime powers, basically of the form
$$ \summany_{\substack{n_1,..., n_r, m_1,..., m_s \\ \textrm{all prime powers}}} \frac{c(n_1) \cdots c(n_r) \overline{c(m_1)\cdots c(m_s)}}{\sqrt{n_1 \cdots  n_r m_1 \cdots m_s}} A({\bf n, m}, T),$$
where the factor $A({\bf n, m}, T)$ contains terms involving the Fourier transform of $f$. 
The restriction of the support of the Fourier transfrom of $f$ is required so that the contribution from the off diagonal terms $n_1 \cdots n_r \neq m_1 \cdots m_s$ can be ignored. Although it is not hard to evaluate the diagonal terms $n_1\cdots n_r = m_1 \cdots m_s$, it was still a challenge to verify that their answers agree with the conjecture arising from the random matrix theory. Rudnick and Sarnak went through complicated combinatorial arguments involving random walks. Later, Conrey and Snaith presented a new formula for $n$-correlation from the random matrix theory side in \cite{ConreySnaith} and applied it in \cite{ConreySnaith2} to straightforwardly match results from both sides. Although this formula looks more intricate than the determinant form in \eqref{def:Wn}, it expresses the answer in terms of a test function, where the Fourier transform is supported in any range, and this allows one to naturally match answers from the number theory side.
  
In analogy with the Pair Correlation conjecture, we expect Rudnick and Sarnak's result above to hold without any condition on the support of the Fourier transform of $f$, where the off-diagonal terms also contribute.  It is worth noting that this type of conjecture is quite powerful and appears currently intractable.  In particular, Montgomery's original Pair Correlation conjecture easily implies that there are infinitely many pairs of zeros of $\zeta(s)$ which are far less than the average spacing apart, and this has deep consequences towards Siegel zeros.  Typically, even extending the support of the Fourier transform beyond what is currently available is a challenging problem.

Katz and Sarnak \cite[appendix]{KatzSarnak} computed the $n$-level density of eigenvalues of various random matrix ensembles and conjectured that the statistics of low-lying zeros of various families of $L$-functions is the same as the corresponding one from the random matrix theory. Rubinstein \cite{Rubinstein} studied a family of quadratic Dirichlet $L$-functions and proved that the $n$-level density for the family matched with the one for symplectic unitary ensemble in a certain range. Later Gao \cite{Gao} doubled the allowable range of the support of the Fourier transform of the test function, but he was not able to prove that his answer matched the conjecture from random matrix theory. This was then resolved by Entin, Roditty-Gershon and Rudnick through zeta functions over function fields \cite{ERRudnick}. Recently, Mason and Snaith \cite{MasonSnaith} presented an alternative proof of this result using a new formula for  $n$-level densities of the random symplectic ensemble, analogous to the work of Conrey and Snaith in \cite{ConreySnaith} and \cite{ConreySnaith2}.
 
 While only a symplectic family is considered in \cite{ERRudnick}, \cite{Gao} and  \cite{Rubinstein}, we consider a family of primitive Dirichlet $L$-functions, which is a unitary case.  To be more precise,  let $\chi$ be a primitive Dirichlet character modulo $q>1$, and  a Dirichlet $L$-functions associated to it is defined to be 
$$ L(s, \chi) = \sum_{n = 1}^\infty \frac{\chi(n)}{n^s}$$
for Re$(s) > 1.$ Throughout this paper, we assume GRH for the Dirichlet $L$-function $L(s, \chi)$ and write its nontrivial zeros as $ \tfrac12 + i \gamma^\chi_j$, $ j = \pm 1, \pm 2 , \dots$, where
$$   \cdots \leq \gamma_{-3}^\chi \leq \gamma_{-2}^\chi \leq \gamma_{-1}^\chi < 0 \leq \gamma_{1}^\chi \leq \gamma_{2}^\chi \leq \cdots. $$
We say that a function $ f : \R^n \to \R$ has the C4-Property provided that  
\begin{description}
	\item[P1] There exist even functions $f_i : \R \to \R$ for $ i \leq n $ such that each function $f_i$ has a Fourier transform $\hat{f}_i(u) := \int_\R f_i(x) e^{ 2 \pi i x u} dx $ with a support contained in an interval $[-\eta_i, \eta_i ]$ and  
	$$ f(\vecx) = f(x_1 , \dots, x_n ) = \prod_{i=1}^n f_i (x_i) . $$
	\item[P2]  $ \eta := \sum_{i =1}^n \eta_i < 4$ and $\varepsilon := 4 - \eta>0$.  
\end{description}
We define the $n$-level density function  by
$$\mathcal{L}_0 (f, \W, Q) =     \sum_{q} \frac{\W (q/Q)}{ \varphi(q)} \sumstar_{\chi \smod q} \ \sumsh_{j_1, ..., j_n}  f(\mathcal U \gamma_{j_1}^\chi  ,  \dots, \mathcal U  \gamma_{j_n}^\chi   )   ,$$
where $\W$ is a smooth function with a compact support in $[1,2]$, the $*$-sum is over primitive Dirichlet characters modulo $q$, the $\sharp$-sum is over distinct indices $j_k$  and throughout this paper
\es{ \label{def:U} \mathcal U = \frac{\log Q}{2\pi}.}
 
  If $\eta < 2$, the off-diagonal terms in $\mathcal{L}_0 $ do not contribute to the main term, and the same method as for proving $n$-correlation of the Riemann zeta function can be applied here, and we do not even need the extra average over $q$. For example, previously, Hughes and Rudnick \cite{HR} derived the same result as in Theorem \ref{main thrm} when $n = 1$ and averaging only over primitive characters of a fixed prime modulus.    Otherwise, the off-diagonal terms also contribute to the main term in $\mathcal{L}_0$.  In this paper, we use the asymptotic large sieve technique to deal with the off-diagonal terms and evaluate
$$ \mathcal{L}_1 (f, \W, Q)  := \int_\R   \sum_{q} \frac{\W (q/Q)}{ \varphi(q)} \sumstar_{\chi \smod q} \ \sumsh_{j_1, ..., j_n}  f( \mathcal U( \gamma_{j_1}^\chi  -t ),  \dots, \mathcal U ( \gamma_{j_n}^\chi - t  ) )  \emts  dt .$$
The $t$-average is fairly short due to the rapid decay of $e^{-t^2}$ along the vertical line, and its appearance is to deal with certain unbalanced sums of the prime powers. Thus this average involves points very close to the real axis, and it is expected to have the same asymptotic formula as $\mathcal{L}_0$ up to a constant factor. It would be very interesting to develop techniques to evaluate $\mathcal L_0$ without the additional short average over $t$.  The computation of the sixth \cite{CISsixth} and eighth moment \cite{CL} of Dirichlet $L$-functions,  averaging over the same family of primitive characters, also contains a similar $t$-average for the same reason.

 Our goal is to prove the following theorem.
\begin{theorem}\label{main thrm}
	Assume GRH for all primitive Dirichlet $L$-functions. Let $f$ have the C4-Property as described above. Then
	\begin{equation}\label{eq of main thm}
	\lim_{ Q \to \infty} \frac{ \mathcal{L}_1 ( f, \W, Q)}{D(\W, Q)} =  \int_{ \mathbb{R}^n} f(\vecx ) W^{(n)} (\vecx) d\vecx , 
	\end{equation}
	where   
	\es{\label{def:DWQ} D(\W,Q) &:= \sum_{q}  \frac{\W (q/Q)}{ \varphi(q)} \varphi^*(q)  \int_{-\infty}^{\infty} \emts \> dt ,}
	  $\varphi^*(q)$ is the number of primitive characters mod $q$ and $W^{(n)}  (\vecx)$ is defined in (\ref{def:Wn}).
\end{theorem}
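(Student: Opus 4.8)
The plan is to compute $\mathcal L_1(f,\W,Q)$ via the explicit formula and match the resulting arithmetic sum against the Conrey–Snaith formula for the $n$-level density of the unitary ensemble. First I would reduce to the case $n=1$ on each factor by using P1, which writes $f(\vecx)=\prod_i f_i(x_i)$; the combinatorial structure of the $\sharp$-sum over distinct indices $j_1,\dots,j_n$ then expands, by inclusion–exclusion over set partitions, into a signed sum of products of "one-level" sums $\sum_j \hat f_{S}(\mathcal U\gamma_j^\chi - \mathcal Ut)$ where the indices are allowed to coincide. For each such one-level sum, apply the Riemann–von Mangoldt explicit formula for $L(s,\chi)$ under GRH: this converts $\sum_j \phi(\mathcal U\gamma_j^\chi)$ into a main term of size $\log q$ (producing, after division by $D(\W,Q)$, the $\hat f_i(0)$ contribution) plus a dual sum over prime powers $\sum_{m} \frac{\Lambda(m)}{\sqrt m}\big(\chi(m)+\overline{\chi(m)}\big)\,\widehat{\hat f_i}(\cdot)$, plus negligible contributions from the archimedean factor and the conductor-dependent terms.

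The heart of the argument is then averaging the product of prime-power sums over the family $\sum_q \FW \sumstar_{\chi\smod q}$ and over $t$ against $\emts\,dt$. The diagonal terms, where the product of the $n_i$ equals the product of the $m_j$, are handled by orthogonality of primitive characters (the $*$-sum detects $n\equiv \pm m \smod q$) together with the short $t$-integral, and they reproduce exactly the "diagonal" part of the Conrey–Snaith formula. The off-diagonal terms $\prod n_i\neq \prod m_j$ are where the asymptotic large sieve of Conrey–Iwaniec–Soundararajan enters: one writes the primitive-character sum in terms of all characters via Möbius inversion over the conductor, opens the $\sumstar$ by additive characters / Gauss sums, and applies the large sieve bound together with Poisson summation in the modulus $q$. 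The $t$-average, via $\int e^{-t^2} (n/m)^{it}dt \ll e^{-(\log(n/m))^2/4}$, forces $n$ and $m$ to be genuinely close, which is exactly what makes the unbalanced ranges of prime powers tractable. The condition P2, $\eta=\sum\eta_i<4$, is precisely the threshold up to which the off-diagonal main term can be extracted and everything beyond it absorbed into the error; I expect the bookkeeping of these off-diagonal main terms — showing they assemble into the non-diagonal pieces of the Conrey–Snaith expansion rather than being pure error — to be the main obstacle, as it was in the pair-correlation case of \cite{CLLR}.

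Finally, on the random-matrix side I would invoke the Conrey–Snaith formula \cite{ConreySnaith,ConreySnaith2} for $\int_{\R^n} f(\vecx)W^{(n)}(\vecx)\,d\vecx$, which expresses this integral in terms of $\prod_i \hat f_i$ integrated against explicit kernels with no restriction on the support of the $\hat f_i$. Matching is then a term-by-term identification: the partition/inclusion–exclusion structure on the number-theory side must be shown to coincide with the sum over ways of pairing the $\hat f_i$'s in the Conrey–Snaith expansion. This is where using the Conrey–Snaith form rather than the determinantal form \eqref{def:Wn} pays off — as in \cite{ConreySnaith2} and \cite{MasonSnaith}, the matchup becomes a direct algebraic comparison rather than the combinatorial random-walk argument of Rudnick–Sarnak. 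Dividing through by $D(\W,Q)$ and letting $Q\to\infty$ then yields \eqref{eq of main thm}, with the constant $\int_{-\infty}^\infty \emts\,dt$ cancelling between $\mathcal L_1$ and $D(\W,Q)$.
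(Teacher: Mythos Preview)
Your proposal is correct and follows essentially the same architecture as the paper: combinatorial sieving over set partitions to remove the distinctness constraint, the explicit formula to pass to prime sums, a diagonal/off-diagonal split with the off-diagonal handled by the Conrey--Iwaniec--Soundararajan asymptotic large sieve (with the $t$-average localizing $\m$ near $\n$ and the threshold $\eta<4$ controlling the error), and finally term-by-term matching against the Conrey--Snaith formula rather than the determinantal form. The only imprecision is in your description of the large-sieve step: the paper does not use Gauss sums or Poisson summation in $q$, but instead writes $\sumstar_\chi \chi(\m)\bar\chi(\n)$ as a divisor sum $\sum_{cd=q,\,d\mid \m-\n}\varphi(d)\mu(c)$, splits by the size of $c$, and on the small-$c$ range switches to a sum over divisors of $|\m-\n|$ and separates $\m,\n$ via the beta-integral identity $|\m-\n|^{-s}=\frac{1}{2\pi i}\int \frac{\Gamma(1-s)\Gamma(z)}{\Gamma(1-s+z)}(\m^{z-s}\n^{-z}+\n^{z-s}\m^{-z})\,dz$; a key cancellation then occurs between the principal-character piece of the large-$c$ range and a residue from the small-$c$ range.
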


Note that instead of the C4-Property we may assume in Theorem \ref{main thrm} that $f : \mathbb{R}^n  \rightarrow \mathbb{R}$ is even in all variables and $\hat{f}(\vecu)$ is supported in the region $ \sum_{j=1}^n |u_j | <  4$  by an approximation argument as in Corollary 1.3 of \cite{ERRudnick}.

This is consistent with the $n$-correlation conjecture arising from the GUE model in random matrix theory where we are able to use a test function whose Fourier transform has double the support of the ones appearing in Rudnick and Sarnak's work. This is the first time for unitary ensemble that the conjecture is verified for a wider range for all $n$. 

 We note that stronger estimations for $n=1$ without $t$-average were studied and conjectured. For details, see \cite{FM} and \cite{HR}. 
  
The proof contains a number of technical details, so we outline it here. In Section \ref{sec:setup}, we will apply a combinatorial sieving, which transforms the sum over distinct ordered zeros in $\mathcal L_1$ to the unrestricted sums. By the explicit formula for a primitive Dirichlet $L$-function, we can express the sum over zeros as a sum over primes. Then, essentially we need to understand the sum $\Scal$ in Proposition \ref{prop:als}. 
The diagonal term is easy to  evaluate, but in our case there is an off-diagonal contribution. To deal with this, we apply the asymptotic large sieve technique developed in \cite{ConreyIwaniecSoundararajan}. Certain delicate combinatorial arrangements appear in these terms along this process. This phenomena does not occur in the pair correlation work of \cite{CLLR} because it can be easily reduced to cases when $\m$ and $\n$ are prime numbers. The details will be covered in Section \ref{sec:ALS}. As a result, the asymptotic formula for \eqref{eq of main thm} is given in \eqref{eqn:lhs estimation complete}.

 Finally, we verify that the result agrees with the random matrix conjecture through the new $n$-correlation formula of Conrey and Snaith \cite{ConreySnaith}, \cite{ConreySnaith2}.  The detailed proof will appear in Section \ref{sec:RMT}.

\section{Initial Setup for the proof of Theorem \ref{main thrm}}\label{sec:setup}
In this section, we will explain how the sum over distinct ordered zeros in $\mathcal L_1(f, \W, Q)$ can be deduced from the unrestricted sum by the combinatorial sieving. This sieving is also appeared in  \cite{RudnickSarnak}, but we describe it here for the sake of completeness.

 A set partition $\underline{G} = \{ G_1 , \dots, G_\nu \}  $ of $ [n] = \{ 1, 2, \dots, n \}$ is a decomposition of $[n]$ into disjoint nonempty subsets $ G_1 , \dots, G_\nu $, where $ \nu = \nu(\underline{G})$.   The collection $\Pi_n$ of all set partitions of $[n]$ forms a lattice with the partial ordering given by $\underline{H}  \preceq \underline{G} $ if every set $G_i $ in $\underline{G}$ is a union of sets in $\underline{H}$. For example, $ \{ \{1, 4\}, \{2\}, \{3\} \} \preceq \{  \{1, 4 \}, \{2, 3\} \} $ in $\Pi_4$. Hence the minimal element of $\Pi_n $ is $\underline{O}=  \{  \{ 1 \} , \{ 2  \} , \dots, \{ n \} \} $ and the maximal element is $ \{ [n] \} $.

\begin{lemma}\label{lemma:cs}     
     The M\"{o}bius function of the poset $\Pi_n$ is the unique function $\mu_n ( \underline{H},\underline{G})$ such that for any functions $C, R : \Pi_n \to \R$, satisfying
   $$ C_{\underline{H}} = \sum_{ \underline{H} \preceq \underline{G} } R_{\underline{G}}, $$ 
   we have
   $$ R_{\underline{H}} = \sum_{ \underline{H} \preceq \underline{ G} } \mu_n ( \underline{H}, \underline{G} ) C_{\underline{G}}  . $$
   In particular, 
   $$ \mu_n ( \underline{O}, \underline{G}) = \prod_{j =1}^\nu (-1)^{ |G_j |-1} ( |G_j |-1)! . $$
      \end{lemma}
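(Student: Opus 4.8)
**The plan is to prove this as the standard Möbius inversion on a finite poset, specialized to the partition lattice $\Pi_n$.**

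First I would invoke the general theory of incidence algebras: for any locally finite poset $P$ there is a unique Möbius function $\mu: \{(x,y): x \preceq y\} \to \mathbb{Z}$ satisfying $\mu(x,x) = 1$ and $\sum_{x \preceq z \preceq y} \mu(x,z) = 0$ for $x \prec y$, and this $\mu$ is exactly the inverse, in the incidence algebra, of the zeta function $\zeta(x,y) = 1$. The inversion formula $C_{\underline H} = \sum_{\underline H \preceq \underline G} R_{\underline G} \iff R_{\underline H} = \sum_{\underline H \preceq \underline G} \mu_n(\underline H, \underline G) C_{\underline G}$ is then immediate by the usual computation: substitute and swap the order of summation, using $\sum_{\underline H \preceq \underline K \preceq \underline G} \mu_n(\underline H, \underline K) = [\underline H = \underline G]$ (or its transpose). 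Since $\Pi_n$ is finite this requires no convergence discussion, so this part is genuinely routine and I would state it crisply, perhaps citing a standard reference (Stanley, \emph{Enumerative Combinatorics}, or Rota).

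Second, I would compute $\mu_n(\underline O, \underline G)$ explicitly. The key structural fact is that the interval $[\underline O, \underline G]$ in $\Pi_n$ is isomorphic, as a poset, to the product $\prod_{j=1}^\nu \Pi_{|G_j|}$: a partition $\underline H$ with $\underline O \preceq \underline H \preceq \underline G$ is the same data as a choice, for each block $G_j$, of a partition of $G_j$. Because the Möbius function of a product poset is the product of the Möbius functions of the factors, this reduces the computation to the single identity $\mu_{\Pi_m}(\hat 0, \hat 1) = (-1)^{m-1}(m-1)!$, where $\hat 0 = \underline O$ and $\hat 1 = \{[m]\}$ are the bottom and top of $\Pi_m$. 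That identity I would establish by induction on $m$ using the defining recursion $\sum_{\hat 0 \preceq \underline K \preceq \hat 1} \mu_{\Pi_m}(\hat 0, \underline K) = 0$: group the $\underline K$ by the number of blocks $k$, use that the number of partitions of $[m]$ into $k$ blocks is the Stirling number $S(m,k)$ and that $[\hat 0, \underline K] \cong \prod \Pi_{|K_i|}$ so $\mu_{\Pi_m}(\hat 0, \underline K)$ depends only on the block sizes, and match against the known identity $\sum_k (-1)^{m-k} S(m,k) (\text{lower-order }\mu\text{'s}) $; alternatively one can short-circuit this using the exponential-formula / generating-function identity $\sum_m \mu_{\Pi_m}(\hat 0,\hat 1) x^m/m! = \log(1+x)$ derived from $\sum_m B_m x^m/m! \cdot (\text{inverse}) $. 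Applying the product formula then gives $\mu_n(\underline O, \underline G) = \prod_{j=1}^\nu \mu_{\Pi_{|G_j|}}(\hat 0,\hat 1) = \prod_{j=1}^\nu (-1)^{|G_j|-1}(|G_j|-1)!$, as claimed.

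The only mild obstacle is bookkeeping: one must be careful that the isomorphism $[\underline O, \underline G] \cong \prod_j \Pi_{|G_j|}$ is an isomorphism of \emph{posets} (so that the Möbius multiplicativity theorem applies) and that $\mu_n(\underline O, \underline G)$ really equals $\mu_{[\underline O, \underline G]}(\underline O, \underline G)$ — i.e., that the Möbius function of an interval agrees with the restriction of the ambient Möbius function, which is true and follows directly from the defining recursion since it only involves elements between the two arguments. Everything else is mechanical. If one prefers to avoid citing the product theorem, the cleanest fully self-contained route is to prove the formula $\mu_n(\underline O,\underline G) = \prod_j (-1)^{|G_j|-1}(|G_j|-1)!$ directly by strong induction on $n$, peeling off one block and using the recursion $\sum_{\underline O \preceq \underline K \preceq \underline G}\mu_n(\underline O,\underline K)=0$; this is essentially the same computation repackaged, and I would present whichever version reads most smoothly in context.
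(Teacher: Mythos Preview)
Your proposal is correct and is the standard argument; the paper itself does not prove this lemma at all, treating it as a known fact about the M\"obius function of the partition lattice (cf.\ Rota or Stanley). So your write-up would supply what the paper leaves to the reader.
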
    
      
Given a set partition $\underline{G} =  \{  G_1 , \dots, G_\nu \}  \in \Pi_n$, define an embedding $\iota_{\underline{G}} : \R^\nu \to \R^n$ by $ \iota_{\underline{G}}(x_1 , \dots, x_\nu ) = ( y_1 , \dots, y_n ) $, where $ y_\ell = x_j $ if $ \ell \in G_j $.  For example, when $\underline{G} = \{ \{1, 4 \}, \{2\}, \{3\} \}$, $ \iota_{\underline{G}}(x_1, x_2, x_3) = (x_1, x_2, x_3, x_1 ).$ 
We also define
 $$ R_{1,\underline{G}}   :=    \sum_q \frac{ \W(q/Q)}{\varphi(q)}  \sumstar_{\chi \smod q} \ \sumsh_{  \gamma_{j_1}^\chi , \dots  , \gamma_{j_\nu}^\chi  } g ( \iota_{\underline{G}} ( \gamma_{j_1}^\chi , \dots ,  \gamma_{j_\nu}^\chi ))  $$
   and
   $$ C_{1,\underline{G}}  :=    \sum_q \frac{ \W(q/Q)}{\varphi(q)}  \sumstar_{\chi \smod q} \ \sum_{  \gamma_{j_1}^\chi , \dots  , \gamma_{j_\nu}^\chi  } g ( \iota_{\underline{G}} ( \gamma_{j_1}^\chi , \dots ,  \gamma_{j_\nu}^\chi )) , $$
   where the $*$-sum is over primitive Dirichlet characters modulo $q$, the $\sharp$-sum is over distinct indices $j_k$, and
$$ g(u_1 , \dots, u_n  ) = \int_\R  f( \mathcal U( u_1  -t ),  \dots, \mathcal U ( u_n  - t  ) )  \emts  dt  . $$   
   Then
   $$ C_{1,\underline{H}} = \sum_{ \underline{H} \preceq \underline{G} } R_{1,\underline{G}}, $$
  and by Lemma \ref{lemma:cs} 
 \begin{equation}\label{eqn:rc rel}
 \mathcal{L}_1 (f, \W, Q)  = R_{1,\underline{O}}   = \sum_{   \underline{ G} \in \Pi_n  } \mu_n ( \underline{O}, \underline{G}) C_{1,\underline{G}}  .
  \end{equation}      
    
We focus on computing $C_{1,\underline{G}} .$   Let
\es{\label{def:Fell} F_\ell (x) = \prod_{i \in G_\ell} f_i (x)   }
for $ \underline{G} = \{G_1 , \dots, G_{\nu } \} \in \Pi_n $. Then by Claim 1 of \cite{Rubinstein} the Fourier transform $\widehat F_\ell (u)$ is supported in $[-\kappa_\ell, \kappa_\ell]$ with 
\es{ \label{kappa ell def}
\kappa_\ell :=\sum_{i \in G_\ell} \eta_i }
 and the function $\prod_{\ell \leq \nu} F_\ell (x_\ell) $ has the C4-Property defined in Section \ref{sec:intro}.  Thus, we see that
\es{ \label{eqn:C1Fginitial}
 C_{1,\underline{G}} & = \int_\R    \sum_q \frac{ \W(q/Q)}{\varphi(q)}  \sumstar_{\chi \smod q} \ \sum_{  \gamma_{j_1}^\chi , \dots  , \gamma_{j_\nu }^\chi  } \prod_{\ell \leq \nu} F_\ell   \big( \, \mathcal U(\gamma_{j_\ell }^\chi -t) \big)   \emts dt \\
 & = \int_\R    \sum_q \frac{ \W(q/Q)}{\varphi(q)}  \sumstar_{\chi \smod q} \  \prod_{\ell \leq \nu} \bigg(  \sum_{  \gamma^\chi   } F_\ell   \big( \, \mathcal U(\gamma^\chi -t) \big) \bigg)  \emts dt .
}
Applying the explicit formula in Lemma \ref{lemma:explicit formula}, we find that
\begin{align*}
 C_{1,\underline{G}} 
 & = \sum_{  S_1 + \cdots + S_4=  [ \nu ]  }   \int_\R    \sum_q \frac{ \W(q/Q)}{\varphi(q)}  \sumstar_{\chi \smod q} \  \prod_{\ell \in S_1 } D_\ell(t)  \prod_{\ell \in S_2 } \overline{D_\ell(t)} \prod_{\ell \in S_3 } \widehat{F}_\ell (0) \prod_{\ell \in S_4 } E_\ell (t)    \emts dt ,
\end{align*}
where  
$$ D_\ell(t) = - \frac{1}{ \log Q}   \sum_{m=1}^\infty \frac{ \Lambda(m) \chi(m)}{m^{ 1/2 + it}} \widehat{F}_\ell  \left( - \frac{\log m}{\log Q} \right) ,  $$
 \begin{align*}
E_\ell ( t) & : = E_{F_\ell} (t)  =  O \bigg(  \frac{ \log (2+|t|)}{ \log Q}  \bigg) 
\end{align*}
and $E_{F_\ell}(t)$ is defined in \eqref{EFt}. Let $A_1$,..., $A_k$ be a collection of disjoint sets of integers, and let $B$ be a set of integers. Here and throughout this paper, $A_1 + \cdots + A_k = B$ means $B$ is a disjoint union of $A_1,..., A_k$.
Next we write
 $$ \prod_{ \ell \in S_1}  D_\ell (t) = \frac{ (-1)^{|S_1|}}{( \log Q)^{ |S_1|}}   \sum_{\m=1}^\infty \frac{a_\m(S_1) \chi(\m) }{\m^{1/2+it}} $$
 and
 $$   \prod_{ \ell \in S_2} \overline{ D_\ell (t) }  = \frac{  (-1)^{|S_2|}}{(\log Q)^{ |S_2|}}  \sum_{\n=1}^\infty \frac{b_\n(S_2) \bar{\chi}(\n) }{\n^{1/2-it}}, $$
 where $|S_i|$ is the number of elements in $S_i,$
$$   a_\m (S_1) = \sum_{ \prod_{\ell \in S_1} m_\ell = \m } \bigg(  \prod_{ \ell \in S_1 }   \Lambda( m_\ell )   \widehat{F}_\ell \bigg( - \frac{ \log m_\ell }{ \log Q} \bigg)  \bigg)  ,$$
and
$$   b_\n (S_2) =  \sum_{ \prod_{\ell \in S_2} n_\ell = \n }  \bigg( \prod_{ \ell \in S_2 }   \Lambda( n_\ell )  \widehat{F}_\ell \bigg(  \frac{ \log n_\ell }{ \log Q} \bigg)  \bigg)  . $$
Then
\es{ \label{def:C1Fg}
 C_{1,\underline{G}} 
 & = \sum_{  S_1 + \cdots + S_4=  [ \nu ]  }   \bigg( \prod_{\ell \in S_3 } \widehat{F}_\ell (0) \bigg) \frac{  (-1)^{|S_1|+ |S_2|}}{(\log Q)^{|S_1|+ |S_2|}}  \\
 &  \times \int_\R    \sum_q \frac{ \W(q/Q)}{\varphi(q)}  \sumstar_{\chi \smod q} \ \bigg(   \sum_{\m=1}^\infty \frac{a_\m(S_1) \chi(\m) }{\m^{1/2+it}} \bigg) \bigg(   \sum_{\n=1}^\infty \frac{b_\n(S_2) \bar{\chi}(\n) }{\n^{1/2-it}}  \bigg)  \prod_{\ell \in S_4 } E_\ell (t)    \emts dt .
}

We estimate $C_{1, \underline{G}}$ in Sections \ref{sec:refineterms}--\ref{sec:ALS}. In Section \ref{sec:refineterms} we first prove that 
the main contribution to $C_{1, \underline{G}}$ comes from the cases $S_4 = \emptyset$ and squarefree $\m, \n $. As mentioned in the introduction, the main contribution is categorized into two types -- diagonal terms ($\m = \n$), calculated in Section \ref{sec:refineterms}, and off-diagonal terms ($\m \neq \n$), estimated in Section \ref{sec:ALS}.


\section{Preliminary lemmas} \label{sec:lemma}
In this section, we present lemmas required in the proof of Theorem \ref{main thrm}.  Let $F   $ be a smooth and rapidly decreasing function on $\mathbb{R}$ with 
$$ \mathrm{supp} ~\hat{F} \in [-\kappa_0 , \kappa_0 ]. $$ 
Then $F$ has an extension to the complex plane that is entire with 
$$ |F(z)| \leq A e^{2 \pi \kappa_0  |z|}$$
for some $ A>0$. See Theorem 3.3 in \cite[p.122]{SS} for a proof. Moreover, for any integer $A_1 \geq 0$ and any real number $A_2 >0$, one can show by partial integrations that
\begin{equation}\label{eqn F bound}
  F( v+ iy  ) = \int_{\mathbb{R}} \hat{F}(w) e^{2 \pi wy}  e^{-2 \pi i w v }     dw \ll_{A_1 ,A_2  ,\kappa_0}   \frac{1}{ 1+|v|^{A_1}} 
\end{equation}
for $v \in \R  $ and $y \in [-A_2 , A_2 ] $.
\begin{lemma}[Explicit Formula]\label{lemma:explicit formula}
Let $ \chi$ be a primitive Dirichlet character modulo $ q > 1$,  $Q \leq q \leq 2Q$  and $F : \R \to \R $ be a smooth and rapidly decreasing function with   its Fourier transform $\hat{F}$ compactly supported. Assume GRH for $L(s,\chi)$. Define
$$ \kappa = \kappa_ \chi  = \begin{cases} 
0 & \mathrm{if} \  \chi(-1) = 1, \\
1 & \mathrm{if} \ \chi(-1) = -1.
  \end{cases}
  $$
 Then we have
\begin{align*}
 \sum_\gamma F\big(\, \mathcal U ( \gamma - t ) \big)    = &   - \frac{1}{ \log Q}  \sum_{m=1}^\infty   \frac{ \Lambda(m) \chi(m)}{ m^{1/2+it } }   \widehat{  F} \bigg( - \frac{ \log m }{ \log Q} \bigg)     -  \frac{1}{ \log Q}  \sum_{m=1}^\infty   \frac{ \Lambda(m) \bar{\chi}(m)}{ m^{1/2- it } }   \widehat{  F} \bigg(   \frac{ \log m }{ \log Q} \bigg)    \\
 & +  \widehat{F}(0)  + E_F(  t),
\end{align*}
where $\mathcal U = (\log Q)/(2\pi) $ and
\begin{align} \label{EFt}
E_F  (  t) &  :=  \frac{1}{ 2 \pi    }  \int_{-\infty}^\infty  F\big(\, \mathcal U ( u -t)\big)  \Rep \bigg[ \frac{\Gamma'}{\Gamma} \bigg( \frac12 \bigg( \frac12+iu +\kappa \bigg) \bigg)   \bigg] du  =  O \bigg(  \frac{ \log (2+|t|)}{ \log Q}  \bigg) .
\end{align}

\end{lemma}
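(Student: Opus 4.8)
The claim is the classical Riemann--von Mangoldt (Weil) explicit formula for $L(s,\chi)$, and the plan is to derive it by the standard contour integral applied to the entire function $\Phi(s):=F\pr{\mathcal U\big((s-\tfrac12)/i-t\big)}$, whose restriction to the line $\Rep(s)=\tfrac12$ is $F\big(\mathcal U(u-t)\big)$ at $s=\tfrac12+iu$. The analytic inputs are that, for $\chi$ primitive modulo $q>1$ (hence non-principal), the completed function $\Lambda(s,\chi):=(q/\pi)^{(s+\kappa)/2}\Gamma\pr{\tfrac{s+\kappa}{2}}L(s,\chi)$ is entire of order $1$ with no poles, its zeros are exactly the nontrivial zeros $\tfrac12+i\gamma$ of $L(s,\chi)$, it satisfies the functional equation $\Lambda(s,\chi)=\epsilon(\chi)\Lambda(1-s,\bar\chi)$ with $|\epsilon(\chi)|=1$, and for $\Rep(s)>1$ one has the logarithmic derivative
\[ \frac{\Lambda'}{\Lambda}(s,\chi)=\frac12\log\frac q\pi+\frac12\frac{\Gamma'}{\Gamma}\pr{\frac{s+\kappa}{2}}+\frac{L'}{L}(s,\chi),\qquad \frac{L'}{L}(s,\chi)=-\sum_{m\geq1}\frac{\Lambda(m)\chi(m)}{m^{s}} . \]
Since $F$ is smooth and its Fourier transform has compact support, $F$ is (the restriction of) an entire function of exponential type that is rapidly decreasing on $\R$; hence $\Phi$ is entire, decays rapidly along horizontal lines uniformly in the strip $-\delta\le\Rep(s)\le1+\delta$, and every $m$-sum below is in fact finite, truncated at $m\leq Q^{O(1)}$ by the support of $\widehat F$.

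First I apply the residue theorem to $\frac1{2\pi i}\oint\frac{\Lambda'}{\Lambda}(s,\chi)\Phi(s)\,ds$ over the positively oriented rectangle with vertical sides $\Rep(s)=1+\delta$ and $\Rep(s)=-\delta$ and horizontal sides at height $\pm T$, $T\to\infty$ through values avoiding ordinates of zeros. The horizontal sides contribute $o(1)$ by the decay of $\Phi$ and standard growth bounds for $\Lambda'/\Lambda$, so the integral equals $\sum_\gamma\Phi\pr{\tfrac12+i\gamma}=\sum_\gamma F\big(\mathcal U(\gamma-t)\big)$. On the right edge I split $\Lambda'/\Lambda$ as above; inserting the (absolutely convergent) Dirichlet series of $L'/L$ and integrating term by term, a shift of contour to $\Rep(s)=\tfrac12$ together with recognition of the Fourier transform gives
\[ \frac1{2\pi i}\int_{(1+\delta)}m^{-s}\Phi(s)\,ds=\frac1{\log Q}\cdot\frac1{m^{1/2+it}}\,\widehat F\pr{-\frac{\log m}{\log Q}} \]
(using $2\pi\mathcal U=\log Q$), which yields the first prime sum $-\tfrac1{\log Q}\sum_m\tfrac{\Lambda(m)\chi(m)}{m^{1/2+it}}\widehat F(-\tfrac{\log m}{\log Q})$. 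On the left edge I use $\frac{\Lambda'}{\Lambda}(s,\chi)=-\frac{\Lambda'}{\Lambda}(1-s,\bar\chi)$ and substitute $s\mapsto1-s$, which turns the integral into a right-edge integral for $\bar\chi$ with $\Phi(s)$ replaced by $\Phi(1-s)$; a further shift to $\Rep(s)=\tfrac12$ and the change of variable $u\mapsto-u$ run the analogous Mellin computation and produce the second prime sum $-\tfrac1{\log Q}\sum_m\tfrac{\Lambda(m)\bar\chi(m)}{m^{1/2-it}}\widehat F(\tfrac{\log m}{\log Q})$.

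It remains to collect the $\tfrac12\log\tfrac q\pi+\tfrac12\tfrac{\Gamma'}{\Gamma}\pr{\tfrac{s+\kappa}{2}}$ parts from the two edges. Each is holomorphic for $\Rep(s)\ge\tfrac12$ (its only poles have $\Rep(s)\le0$), so I move both to $\Rep(s)=\tfrac12$; after the substitution $u\mapsto-u$ applied to the transformed left-edge piece, both are integrals in the single variable $u$ against $F\big(\mathcal U(u-t)\big)$, and because $F$ is real-valued their $\Gamma'/\Gamma$ contributions combine as $\tfrac12\tfrac{\Gamma'}{\Gamma}(\tfrac12(\tfrac12+iu+\kappa))+\tfrac12\overline{\tfrac{\Gamma'}{\Gamma}(\tfrac12(\tfrac12+iu+\kappa))}=\Rep\big[\tfrac{\Gamma'}{\Gamma}(\tfrac12(\tfrac12+iu+\kappa))\big]$, producing exactly $E_F(t)$. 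The two $\log\tfrac q\pi$ pieces together give $\tfrac{\log(q/\pi)}{\log Q}\widehat F(0)$, since $\int_\R F\big(\mathcal U(u-t)\big)\,du=\widehat F(0)/\mathcal U$; as $\W$ is supported in $[1,2]$ we have $q\asymp Q$, so $\log(q/\pi)/\log Q=1+O(1/\log Q)$ and this coincides with $\widehat F(0)$ up to an error that is harmlessly absorbed into $E_F(t)$. Assembling the four contributions gives the displayed identity, and the estimate $E_F(t)\ll\log(2+|t|)/\log Q$ follows from Stirling's formula $\Rep\big[\tfrac{\Gamma'}{\Gamma}(\tfrac12(\tfrac12+iu+\kappa))\big]\ll\log(2+|u|)$ combined with $\int_\R|F(\mathcal U(u-t))|\,du\ll1/\log Q$ and the concentration of $F(\mathcal U(u-t))$ near $u=t$ with rapidly decaying tails.

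The proof has no serious obstacle; the only points needing care are the routine analytic justifications --- admissibility of the contour shifts and vanishing of the horizontal segments, resting on classical bounds for $\Lambda'/\Lambda(s,\chi)$ on horizontal lines (via the Hadamard factorization) and the Paley--Wiener decay of $\Phi$ in the strip --- and keeping track of the $\log q$ versus $\log Q$ normalization of the $\widehat F(0)$-term, which is inconsequential because $q\asymp Q$ on the support of $\W$.
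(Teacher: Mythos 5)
Your proposal is correct and follows essentially the same route as the paper: a contour integral of $\xi'/\xi(s,\chi)$ against the test function, the functional equation to convert the left edge into a right-edge integral for $\bar\chi$, term-by-term integration of the Dirichlet series with a shift to the critical line to recognize $\widehat F$, and Stirling's formula for the bound on the archimedean term. Your observation that the $\log(q/\pi)/\log Q$ factor on the $\widehat F(0)$-term differs from $1$ by $O(1/\log Q)$ and is absorbed into the error is exactly the (implicit) step in the paper's own argument.
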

\begin{proof}
Define
$$ \xi(s, \chi) = L(s , \chi) \Gamma \bigg( \frac{ s+\kappa}{2} \bigg) \bigg( \frac{q}{\pi} \bigg)^{(s+\kappa)/2} . $$
Then $\xi(s, \chi)$ is an entire function and its zeros are exactly the nontrivial zeros of $L(s, \chi)$. By Cauchy's integral formula
\begin{align*}
 \sum_\gamma F\big(\, \mathcal U( \gamma - t ) \big)   = &  \frac{1}{ 2 \pi i }  \int_{(1)}  F\big(\, \mathcal U( -iw-t)\big) \frac{ \xi'}{\xi} \bigg( \frac12 + w , \chi\bigg) dw \\
  & -   \frac{1}{ 2 \pi i }  \int_{(-1)}  F\big(\, \mathcal U( -iw-t)\big) \frac{ \xi'}{\xi} \bigg( \frac12 + w , \chi\bigg) dw \\
  := & I_1 + I_2 .
\end{align*}
Note that the contribution from the horizontal lines vanishes by \eqref{eqn F bound}. We shall estimate $I_1$ first.
\begin{align*}
I_1 &=   \frac{1}{ 2 \pi i }  \int_{(1)}  F\big(\, \mathcal U( -iw-t)\big) \bigg(  \frac{ L'}{L} \bigg( \frac12 + w , \chi\bigg)  + \frac12 \frac{\Gamma'}{\Gamma} \bigg( \frac12 \bigg( \frac12+w+\kappa \bigg) \bigg) +\frac12 \log \frac{q}{\pi}  \bigg)dw  \\
   &=: I_{11} + I_{12} + I_{13} .
\end{align*}
Writing out $ L'/L (s)$ in term of its Dirichlet series and shifting the contour integration to Re$(w) = 0$, we have 
\begin{align*}
 I_{11}  & = 
    -\sum_{m=1}^\infty   \frac{ \Lambda(m) \chi(m)}{ m^{1/2+it } }   \frac{1}{ 2 \pi \mathcal U    }  \int_{-\infty}^\infty  F(u)   m^{- i u/ \mathcal U     }   du  = - \frac{1}{ \log Q}  \sum_{m=1}^\infty   \frac{ \Lambda(m) \chi(m)}{ m^{1/2+it } }   \widehat{  F} \bigg( - \frac{ \log m }{ \log Q} \bigg);
 \end{align*}

 \begin{align*}
I_{12}     =    \frac{1}{ 4 \pi    }  \int_{-\infty}^\infty  F\big(\, \mathcal U( u -t)\big)   \frac{\Gamma'}{\Gamma} \bigg( \frac12 \bigg( \frac12+iu +\kappa \bigg) \bigg)   du;
  \end{align*}
and
$$ I_{13} =    \frac{\log  {q}/{\pi} }{ 2\log Q} \widehat{F}(0).  $$
Next we consider $I_2 $. By the functional equation of $\xi(s,\chi)$, 
$$ \frac{ \xi'}{\xi} (s, \chi) = - \frac{\xi'}{\xi} ( 1-s , \bar{\chi}).$$
(See Section 10.1 of \cite{MV} for the detail.)
Thus,
\begin{align*}
I_2 &=          \frac{1}{ 2 \pi i }  \int_{(1)}  F\big(\, \mathcal U( iw-t)\big) \frac{ \xi'}{\xi} \bigg( \frac12 +  w , \bar{\chi}\bigg) dw \\
   &=   \frac{1}{ 2 \pi i }  \int_{(1)}  F\big(\, \mathcal U( iw-t)\big) \bigg(  \frac{ L'}{L} \bigg( \frac12 + w , \bar{\chi} \bigg)  + \frac12 \frac{\Gamma'}{\Gamma} \bigg( \frac12 \bigg( \frac12+w+\kappa \bigg) \bigg) +\frac12 \log \frac{q}{\pi}  \bigg)dw  \\
    & =: I_{21} + I_{22} + I_{23} .
\end{align*}
By the same argument as $I_1$, we obtain that
\begin{align*}
 I_{21}  & =  
  - \frac{1}{ \log Q}  \sum_{m=1}^\infty   \frac{ \Lambda(m) \bar{\chi}(m)}{ m^{1/2- it } }   \widehat{  F} \bigg(   \frac{ \log m }{ \log Q} \bigg)    ,
 \end{align*}
 \begin{align*}
I_{22} 
  & =    \frac{1}{ 4 \pi    }  \int_{-\infty}^\infty  F\big(\, \mathcal U( u -t)\big)   \frac{\Gamma'}{\Gamma} \bigg( \frac12 \bigg( \frac12 - iu +\kappa \bigg) \bigg)   du, 
  \end{align*}
 and
$$ I_{23} =      \frac{\log  {q}/{\pi} }{ 2\log Q} \widehat{F}(0) =  I_{13}.  $$
Hence,
\begin{align*}
 \sum_\gamma F\big(\, \mathcal U ( \gamma - t ) \big)   = &   
   - \frac{1}{ \log Q}  \sum_{m=1}^\infty   \frac{ \Lambda(m) \chi(m)}{ m^{1/2+it } }   \widehat{  F} \bigg( - \frac{ \log m }{ \log Q} \bigg)     -  \frac{1}{ \log Q}  \sum_{m=1}^\infty   \frac{ \Lambda(m) \bar{\chi}(m)}{ m^{1/2- it } }   \widehat{  F} \bigg(   \frac{ \log m }{ \log Q} \bigg)    \\
 & +\frac{\log  {q}/{\pi} }{ \log Q} \widehat{F}(0)  +  \frac{1}{ 2 \pi    }  \int_{-\infty}^\infty  F\big(\, \mathcal U( u -t)\big)  \Rep \bigg[ \frac{\Gamma'}{\Gamma} \bigg( \frac12 \bigg( \frac12+iu +\kappa \bigg) \bigg)   \bigg] du .
\end{align*}
 Since $Q \leq q \leq 2Q$, $\frac{\log q/\pi}{\log Q} = 1 + O\left( \frac{1}{\log Q} \right)$. Moreover, by Stirling's formula, the integration above is bounded by
\begin{align*}
& \ll  \int_{-10}^{10} |F \big(\, \mathcal U(u-t)\big)| du +  \int_{|u|>10}  | F  \big(\, \mathcal U( u -t)\big) | \log |u|  du   
 \ll  \frac{ \log (2+|t|)}{ \log Q},
\end{align*}
and we then obtain (\ref{EFt}).

 \end{proof}

\begin{lemma}[Large sieve inequality] \label{lemma:lsi}
For any complex numbers $a_m$ with $ M< m  \leq M+N$, where $N$ is a positive integer, we have
$$ \sum_{ Q < q \leq 2Q}  \frac{1}{ \varphi(q)} \sumstar_{\chi \shortmod q} \bigg|  \sum_{ M < m \leq M+N }  a_m \chi(m)  \bigg|^2  \ll   \bigg(  Q + \frac{N}{Q} \bigg) \sum_{  M < m \leq M+N } |a_m|^2 . $$ 
\end{lemma}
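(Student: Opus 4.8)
The statement to prove is the large sieve inequality for primitive characters (Lemma \ref{lemma:lsi}). Let me sketch a proof plan.

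The plan is to deduce the primitive-character version from the classical (Montgomery–Vaughan / Davenport) large sieve inequality for \emph{all} characters modulo $q$, namely
\[
\sum_{q \leq Q} \frac{q}{\varphi(q)} \sideset{}{^*}\sum_{\chi \shortmod q} \Bigl| \sum_{M < m \leq M+N} a_m \chi(m) \Bigr|^2 \ll (Q^2 + N) \sum_{M < m \leq M+N} |a_m|^2,
\]
which itself follows from the analytic large sieve $\sum_{q\le Q}\sum_{a(q)}^{*}|\sum_m a_m e(ma/q)|^2\ll (Q^2+N)\sum|a_m|^2$ after expressing primitive-character sums via additive characters (Gauss sums). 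First I would recall that for a primitive character $\chi$ mod $q$ one has $|\tau(\chi)|^2 = q$ and $\chi(m)\tau(\bar\chi) = \sum_{a \shortmod q} \bar\chi(a) e(ma/q)$, so that $\sum_m a_m \chi(m)$ has the same $L^2$-norm (up to the factor $q$) as a sum of additive-character sums; summing over primitive $\chi$ mod $q$ and then over $q$ and invoking orthogonality and the analytic large sieve yields the displayed bound.

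Next I would restrict the outer sum to the dyadic range $Q < q \leq 2Q$ rather than $q \leq 2Q$; on this range $q \asymp Q$, so $q/\varphi(q) \asymp 1$ trivially can be dropped in the \emph{wrong} direction — rather, since we want $\sum \frac{1}{\varphi(q)}$ and the standard inequality produces $\sum \frac{q}{\varphi(q)}$, I divide through: for $Q < q \le 2Q$ we have $\frac1{\varphi(q)} = \frac{1}{q}\cdot\frac{q}{\varphi(q)} \le \frac{1}{Q}\cdot\frac{q}{\varphi(q)}$. Hence
\[
\sum_{Q < q \leq 2Q} \frac{1}{\varphi(q)} \sideset{}{^*}\sum_{\chi \shortmod q} \Bigl| \sum_{M < m \leq M+N} a_m \chi(m) \Bigr|^2 \leq \frac{1}{Q}\sum_{q \leq 2Q} \frac{q}{\varphi(q)} \sideset{}{^*}\sum_{\chi \shortmod q} \Bigl| \sum_{M < m \leq M+N} a_m \chi(m) \Bigr|^2 \ll \frac{Q^2 + N}{Q} \sum_{M < m \leq M+N} |a_m|^2,
\]
and $\frac{Q^2+N}{Q} = Q + \frac{N}{Q}$, which is exactly the claimed bound.

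The main (and really only) obstacle is bookkeeping: making sure the Gauss-sum manipulation is valid — this needs $\chi$ \emph{primitive} so that $\tau(\bar\chi)\ne 0$, which is precisely why the $*$-sum appears — and tracking that the classical large sieve is being quoted in the form with the $q/\varphi(q)$ weight (equivalently, one may quote it directly in the primitive form, e.g.\ as in Montgomery's \emph{Topics in Multiplicative Number Theory} or Iwaniec–Kowalski, where it appears with exactly these constants). There is no genuine analytic difficulty; the content is entirely in citing the standard inequality and performing the trivial dyadic normalization $1/\varphi(q) \le (1/Q)(q/\varphi(q))$. I would therefore present this as a short lemma whose proof is essentially a reference plus the one-line normalization above.
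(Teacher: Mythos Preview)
Your proposal is correct and follows the standard derivation. The paper itself simply cites this as a consequence of Theorem 7.13 in Iwaniec--Kowalski, so your sketch (Gauss-sum reduction to the additive large sieve, followed by the dyadic normalization $1/\varphi(q)\le (1/Q)\cdot q/\varphi(q)$ for $Q<q\le 2Q$) is exactly the content behind that citation.
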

This is a consequence of Theorem 7.13 in \cite{IK}.

\begin{lemma}\label{lemma:D asymp} Let $D(\W, Q)$ be defined as in (\ref{def:DWQ}). Then
$$ D(\W,Q)  =  \sqrt \pi  \hW(1) Q \prod_p \left(1 - \frac 1{p^2} - \frac{1}{p^3} \right) + O\big( \sqrt{Q}  \big),
  $$
  where $\widetilde{\W}(s) = \int_0^\infty  \W(x) x^{s-1} dx  $ is the Mellin transform of $\W$ and the product is over the prime numbers. 
\end{lemma}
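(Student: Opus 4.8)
The plan is to first carry out the $t$-integral, which is independent of $q$ and equals $\int_{-\infty}^{\infty} e^{-t^2}\,dt = \sqrt{\pi}$, so that
\[
D(\W,Q) = \sqrt{\pi}\,\sum_{q}\frac{\W(q/Q)}{\varphi(q)}\,\varphi^*(q).
\]
The substance of the lemma is therefore the asymptotic evaluation of $\sum_q \W(q/Q)\varphi^*(q)/\varphi(q)$. The key input is the classical formula for the number of primitive characters: $\varphi^*(q) = \sum_{d\mid q}\mu(d)\varphi(q/d)$. I would insert this and interchange the order of summation, writing $q = d m$, so that
\[
\sum_{q}\frac{\W(q/Q)}{\varphi(q)}\,\varphi^*(q)
= \sum_{d}\mu(d)\sum_{m}\frac{\varphi(m)}{\varphi(dm)}\,\W(dm/Q).
\]
To separate the dependence of $\varphi(dm)$ on the common factors of $d$ and $m$, I would use the standard identity $\varphi(m)/\varphi(dm) = \varphi(\gcd(d,m))/(\varphi(d)\,\gcd(d,m))$ (equivalently, expand via the multiplicativity of $n\mapsto n/\varphi(n)$), reducing the inner sum to a sum of $\W(dm/Q)$ twisted by a multiplicative function of $\gcd(d,m)$.

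Next I would evaluate the sum over $m$ for fixed $d$ by Mellin inversion: writing $\W(dm/Q) = \frac{1}{2\pi i}\int_{(c)}\widetilde{\W}(s)(Q/(dm))^s\,ds$ and summing the resulting Dirichlet series in $m$ against the multiplicative weight. That Dirichlet series factors as $\zeta(s)$ times an Euler product that is absolutely convergent for $\mathrm{Re}(s) > 0$; pushing the contour to the left past the simple pole of $\zeta$ at $s=1$ picks up a main term of size $Q/\varphi(d)$ times a local factor at the primes dividing $d$, with the shifted contour contributing $O(\sqrt{Q/d}\cdot d^{\,\epsilon})$ after using rapid decay of $\widetilde{\W}$ on vertical lines. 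Summing the main terms over $d$ against $\mu(d)$ and the local factors reassembles into a single Euler product over all primes; a short local computation at a prime $p$ (combining the $d = 1$, $d = p$ contributions with the factor from $\zeta$) yields exactly $1 - p^{-2} - p^{-3}$, and the leading constant $\widetilde{\W}(1)$ comes from the residue of $\zeta(s)\widetilde{\W}(s)$ at $s = 1$. The error terms sum to $O(\sqrt{Q}\sum_d d^{-1/2+\epsilon}\cdot(\text{something summable}))$, which I would arrange to be $O(\sqrt{Q})$ by truncating the $d$-sum at $d\le Q$ (since $\W$ is supported in $[1,2]$, only $d\le 2Q$ contribute anyway) and noting the tail beyond, say, $d > Q^{1/2}$ is negligible because $\W(dm/Q)$ forces $m \asymp Q/d$ to be bounded.

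The main obstacle I anticipate is purely bookkeeping: correctly tracking the local factors at primes dividing $d$ through the Mellin/Euler-product manipulation and then verifying that, after summing over $d$ with the Möbius weight, everything collapses to the clean product $\prod_p(1 - p^{-2} - p^{-3})$. There is no deep analytic difficulty — only $\zeta(s)$, its pole, and standard convexity/rapid-decay bounds on $\widetilde{\W}$ are needed — but the combinatorics of the Euler factors must be done carefully, and one must make sure the error term is genuinely $O(\sqrt{Q})$ and not $O(\sqrt{Q}\log Q)$, which requires the Euler product in the error estimate to converge rather than just be bounded by $d^\epsilon$; this is where the $-p^{-3}$ (rather than something larger) ultimately matters for the shape of the statement, though not for the order of the error.
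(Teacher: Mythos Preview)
Your proposal is correct and follows essentially the same route as the paper: Mellin inversion, identification of the relevant Dirichlet series, and a contour shift to $\Rep(s)=1/2$ picking up the pole of $\zeta(s)$ at $s=1$. The only organizational difference is that the paper computes the full Dirichlet series $\sum_{q\ge 1}\varphi^*(q)/(\varphi(q)q^s)=\zeta(s)\zeta(s+1)^{-1}\mathcal G(s)$ in one step via its Euler product and then shifts once, whereas you first open $\varphi^*(q)=\sum_{d\mid q}\mu(d)\varphi(q/d)$, swap sums, and perform the Mellin step for each $d$ before resumming; the paper's ordering avoids the extra bookkeeping you flag (tracking local factors at $p\mid d$ and checking the $d$-sum of errors converges), but both arrive at the same Euler product $\prod_p(1-p^{-2}-p^{-3})$ and the same $O(\sqrt Q)$ error. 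Your closing remark that the $-p^{-3}$ ``matters for the shape of the statement'' is a slight misdiagnosis --- the error bound comes purely from the behaviour on $\Rep(s)=1/2$ and the convergence of $\sum_d |\mu(d)|/(\varphi(d)\sqrt d)$, not from the specific constant in the main term --- but this does not affect the correctness of your argument.
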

\begin{proof}
By the Mellin inversion formula and 
\es{\label{int:et2} \int_{-\infty}^{\infty} e^{-t^2} \> dt = \sqrt{\pi},} 
we have
\est{D(\W,Q) & = \sqrt \pi \sum_q \frac{1}{2\pi i} \int_{(2)} \frac{Q^s}{ \varphi(q) q^s} \hW(s) \varphi^*(q) \> ds.}
 Since $ \varphi^*(q)  = \sum_{cd=q} \varphi(c) \mu(d)$ and $ \varphi( q) = \sum_{ cd=q} c\mu(d)$, we obtain that
$$ \sum_{ q=1}^\infty \frac{ \varphi^*(q)}{ \varphi(q) q^s} = \frac{ \zeta(s)}{\zeta(s+1)} \mathcal G(s),$$
where $$\mathcal G(s) =  \prod_p  \left( 1 - \frac{1}{p^{s+1}}\right)^{-1}\left(  1- \frac{1}{(p -1)p^s} + \frac{1}{(p-1)p^{2s} } - \frac{1}{p^{2s + 1}}\right),$$ and it is absolutely convergent when $\Rep (s) > 0.$ Therefore 
\est{D(\W,Q) & = \sqrt \pi   \frac{1}{2\pi i} \int_{(2)}  Q^s  \hW(s)  \frac{ \zeta(s)}{\zeta(s+1)} \mathcal G(s) \> ds. }
 Moving the contour integration to the line $\Rep (s) = 1/2$, we pick up a simple pole at $s = 1$.  Since the Mellin transform of the compactly supported $\W$ decays faster than any power of $\Imp(s)$, we derive the lemma.

\end{proof}

\begin{lemma}\label{lemma:B sum} Let $m$ be a positive integer. Then  for $ \Rep (s) > 0 $ 
$$ \sum_{ \substack{d \\ (d, m)=1}} \frac{1}{ \varphi(cd) d^s } = \frac{1}{\varphi(c)} \zeta(1+s) B(s) B_1( s,m) B_2 (s , c)   ,$$
where
\begin{equation*}\begin{split}
B(s) & = \prod_p \bigg( 1+ \frac{1}{ (p-1)p^{s+1}}\bigg) \\
B_1(s , m) & = \prod_{p |m} \bigg( 1- \frac{1}{p^{s+1}}\bigg)\bigg( 1+ \frac{1}{ (p-1)p^{s+1}} \bigg)^{-1}   \\
B_2(s, c) & = \prod_{p|c} \bigg( 1+ \frac{1}{ (p-1)p^{s+1}} \bigg)^{-1} .
\end{split}\end{equation*}
 For $ \Rep(s)=-1+\varepsilon > -1 $ we have
$ |B(s) | \ll_\varepsilon 1 $,  $|B_1 (s,m)  |   \ll_\varepsilon  2^{\omega(m)}$ and $ |B_2 ( s, c) | \ll_\varepsilon 1 $, where $ \omega (m)$ is the number of distinct prime factors of $m$.
\end{lemma}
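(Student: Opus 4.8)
The plan is to evaluate the Dirichlet series $\sum_{(d,m)=1} 1/(\varphi(cd)d^s)$ by a multiplicative/Euler-product computation, reducing it to an Euler product over primes and then matching that product against $\zeta(1+s)$ times the three explicit correction factors $B$, $B_1$, $B_2$. The key observation is that the summand is not quite multiplicative in $d$ because of the coupling between $c$ and $d$ inside $\varphi(cd)$; so the first step is to factor $\varphi(cd)$ cleanly in terms of $\varphi(c)$, $\varphi(d)$, and the primes dividing both. Concretely, write $d = d_1 d_2$ where $d_1$ is supported on primes dividing $c$ and $d_2$ is supported on primes not dividing $c$ (and, by the coprimality condition, not dividing $m$ either). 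Then $\varphi(cd) = \varphi(c d_1)\varphi(d_2)$, and for a prime power $p^a \| c d_1$ with $p \mid c$ one has $\varphi(p^{a}) = p^{a}(1-1/p)$, which lets one pull out $\varphi(c)$ and be left with a sum over $d_1$ that is a product over $p \mid c$ of a geometric-type series in $p^{-s}$.

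**Next I would** carry out the three local computations. For primes $p \nmid cm$: $\sum_{a\ge 0} 1/(\varphi(p^a) p^{as}) = 1 + \sum_{a\ge 1} 1/(p^{a-1}(p-1)p^{as}) = 1 + \frac{1}{(p-1)p^{s}}\sum_{a\ge 1} p^{-(a-1)(s+1)} = 1 + \frac{1}{(p-1)p^{s}(1-p^{-(s+1)})}$. One checks algebraically that this equals $(1-p^{-(s+1)})^{-1}\bigl(1 + \frac{1}{(p-1)p^{s+1}}\bigr)$, i.e. the local factor of $\zeta(1+s)$ times the local factor of $B(s)$. For primes $p \mid m$ (hence $p \nmid d$, so only the $a=0$ term survives): the local sum is $1$, which accounts for the factor $B_1(s,m)$ — it exactly removes the $p\mid m$ Euler factors of $\zeta(1+s)B(s)$ and replaces them by $1$. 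For primes $p \mid c$ (and $p\nmid m$): here $\varphi(cd)$ contributes $\varphi(c)$ times the corresponding power of $p$ without the $(1-1/p)$ loss, so the local sum over the $p$-part of $d$ is $\sum_{a\ge 0} p^{-a}p^{-as} = (1-p^{-(s+1)})^{-1}$, which is precisely the local factor of $\zeta(1+s)$ with no $B(s)$-correction; dividing by the $B(s)$-factor that $\zeta(1+s)B(s)$ would otherwise supply gives exactly $B_2(s,c)$. Assembling: $\sum_{(d,m)=1} 1/(\varphi(cd)d^s) = \frac{1}{\varphi(c)}\,\zeta(1+s)\,B(s)\,B_1(s,m)\,B_2(s,c)$, with convergence for $\operatorname{Re}(s)>0$ since all three $B$-factors are given by Euler products converging there.

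**The main obstacle** — really the only subtle point — is bookkeeping the factor $\varphi(cd)$ correctly when $c$ and $d$ share prime factors, i.e. being careful that $\varphi(cd) = \varphi(c)\varphi(d)\cdot \frac{\gcd(c,d)}{\varphi(\gcd(c,d))}$ style corrections are handled prime by prime rather than via a global formula, and making sure the three classes of primes (dividing $m$, dividing $c$ but not $m$, dividing neither) are exhaustive and disjoint given the constraint $(d,m)=1$. Once the primes are split this way the computation is purely local and each identity is an elementary rational-function manipulation; I would present one representative local identity in full and state the other two as analogous. No genuine analytic input is needed beyond recognizing $\prod_p (1-p^{-(s+1)})^{-1} = \zeta(1+s)$ and noting the $B_i$ are finite Euler products (so entire and nonvanishing near $\operatorname{Re}(s)>0$), which gives the claimed region of absolute convergence.
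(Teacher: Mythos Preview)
Your proposal is correct and follows the standard Euler-product route; the paper itself does not give a proof but simply cites Lemma~6 of \cite{CLLR}, so there is no ``paper's proof'' to compare against in detail, but your computation is exactly the natural one and almost certainly matches that reference. One small remark: your three-way partition of primes (those dividing $m$; those dividing $c$ but not $m$; those dividing neither) and the resulting match with $B_1$ and $B_2$ implicitly uses $(c,m)=1$. The lemma as stated in the paper does not record this hypothesis, but it holds in every application (e.g.\ in \S5.1--5.2 one always has $(c,\m\n P)=1$ or the analogous coprimality), and indeed the identity fails without it --- for $p\mid c$ and $p\mid m$ the left-hand local factor is $1$ while the right-hand local factor is $\bigl(1+\tfrac{1}{(p-1)p^{s+1}}\bigr)^{-1}$. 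You might note this explicitly.
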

 
\begin{proof}
One can prove the first identity in the lemma by changing the sum to its Euler product. The proof is quite standard and we omit it. To prove the inequalities we see that for $ \Rep(s)=-1+\varepsilon >-1$
\es{ \label{Bsbounds} |B(s) |  & \leq \prod_p \bigg( 1 + \frac{1}{ (p-1)p^\varepsilon} \bigg),  \\
 |B_1 ( s,m) | & \leq  \prod_{p |m} \bigg( 1 +  \frac{1}{p^{\varepsilon }}\bigg) 
 \prod_p \bigg( 1- \frac{1}{ (p-1)p^{\varepsilon }} \bigg)^{-1} \ll_\varepsilon  \prod_{p|m} 2  = 2^{\omega(m)}\\
 |B_2 ( s,c) |  &   \leq     \prod_p  \bigg( 1- \frac{1}{ (p-1)p^{\varepsilon}} \bigg)^{-1} .  }
 Note that the infinite products in \eqref{Bsbounds} are convergent.
\end{proof}

    \begin{lemma}\label{lemma:primecharsum1}
 Let  $\Psi $ be a nonprincipal Dirichlet character modulo $  d > 1$. Suppose that $ c,d  \leq Q^4$. Assume GRH for $L(s, \Psi)$. Define
 \est{a_\m =\mu^2 (\m)  \sum_{  p_1 \cdots p_k = \m  }\bigg(  \prod_{j=1}^k \log p_j   \widehat{F}_j \left( -\frac{\log p_j }{\log Q} \right)  \bigg)  }
and 
\est{b_\n = \mu^2(\n)  \sum_{  p_{k+1} \cdots p_{k+r} = \n  }    \bigg( \prod_{j=k+1}^{k+r}   \log p_j   \widehat{F}_j \left( \frac{\log p_j }{\log Q} \right) \bigg) ,}
where $F_j$ is defined in \eqref{def:Fell}.  Let $ \alpha , \beta \in \C$ with $\Rep (\alpha), \Rep (\beta) \in \left(\frac 12 -\frac{10}{\log Q} , \frac 12 +\frac{10}{\log Q} \right) $.   Then
   $$ \sumtwo_{\substack{\m,\n   \\ (\m,\n)=1 \\ (\m\n, c)=1  }}     \frac{a_\m \Psi (\m) b_n\overline{\Psi }(\n) }{\m^\alpha \n^\beta }  \ll \bigg(\log \big(Q( 2+|\Imp (\alpha) |)\big)\bigg)^{2k}  \bigg(\log \big( Q(2 + |\Imp (\beta) |\big) \bigg)^{2r},$$
   where the implied constant depends on $k$ and $ r$.

 \end{lemma}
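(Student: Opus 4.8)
The plan is to turn the double sum into a bounded number of one‑dimensional prime sums and to control the nontrivial ones using GRH. Since $a_\m$ is supported on squarefree integers with exactly $k$ prime factors and $b_\n$ on squarefree integers with exactly $r$ prime factors, and since $(\m,\n)=1$, expanding $a_\m$ and $b_\n$ as ordered products of primes shows that the sum in the lemma equals
\[
\sideset{}{^\flat}\sum_{p_1,\dots,p_{k+r}}\ \prod_{j=1}^{k}\frac{(\log p_j)\,\widehat{F}_j\!\left(-\tfrac{\log p_j}{\log Q}\right)\Psi(p_j)}{p_j^{\alpha}}\ \prod_{j=k+1}^{k+r}\frac{(\log p_j)\,\widehat{F}_j\!\left(\tfrac{\log p_j}{\log Q}\right)\overline{\Psi}(p_j)}{p_j^{\beta}},
\]
where $\sideset{}{^\flat}\sum$ runs over $(k+r)$‑tuples of pairwise distinct primes none of which divides $c$: squarefreeness of $\m,\n$ forces the $p_j$ to be distinct inside each block, $(\m,\n)=1$ forces the two blocks to be disjoint, and $(\m\n,c)=1$ is precisely the condition $p_j\nmid c$ for all $j$.

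I would then remove the distinctness constraint by inclusion--exclusion over the partition lattice $\Pi_{k+r}$, exactly as in Lemma~\ref{lemma:cs}: writing $\sideset{}{^\flat}\sum=\sum_{\pi\in\Pi_{k+r}}\mu_{k+r}(\underline{O},\pi)\,T_\pi$, where $T_\pi$ is the same sum but with the $p_j$ only required to be constant on the blocks of $\pi$ and coprime to $c$. As the summand factors over the coordinates, $T_\pi$ is a product over the blocks $\mathcal B$ of $\pi$ of single prime sums
\[
S_{\mathcal B}:=\sum_{\substack{p\ \mathrm{prime}\\ p\,\nmid\, c}}(\log p)^{|\mathcal B|}\Bigl(\prod_{j\in\mathcal B}\widehat{F}_j\!\left(\mp\tfrac{\log p}{\log Q}\right)\Bigr)\frac{\Psi(p)^{a}\,\overline{\Psi}(p)^{b}}{p^{a\alpha+b\beta}},\qquad a=|\mathcal B\cap[k]|,\ b=|\mathcal B|-a
\]
(the term vanishes if $p\mid d$, since then $\Psi(p)=0$). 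Because $|\Pi_{k+r}|=O_{k,r}(1)$ and $|\mu_{k+r}(\underline{O},\pi)|\le(k+r)!$, it is enough to bound each $S_{\mathcal B}$. If $|\mathcal B|\ge2$ (in particular for every mixed block), then $a\,\Rep\alpha+b\,\Rep\beta\ge|\mathcal B|\bigl(\tfrac12-\tfrac{10}{\log Q}\bigr)\ge1-O(1/\log Q)$ while $\widehat{F}_j$ confines $p$ to $p\le Q^{\kappa_j}=Q^{O(1)}$; hence, estimating the character by $1$ and applying Mertens, $S_{\mathcal B}\ll_{k,r}(\log Q)^{|\mathcal B|}\le(\log Q)^{a}(\log Q)^{b}$.

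The crux is a pure singleton, say $\mathcal B=\{j\}$ with $j\le k$ (the case $j>k$ is symmetric with $\beta$ in place of $\alpha$), where
\[
S_{\{j\}}=\sum_{\substack{p\ \mathrm{prime}\\ p\,\nmid\, c}}\frac{\Psi(p)\log p}{p^{\alpha}}\,\widehat{F}_j\!\left(-\frac{\log p}{\log Q}\right)
\]
and I claim $S_{\{j\}}\ll\log^{2}\!\bigl(Q(2+|\Imp\alpha|)\bigr)$. The primes with $p\mid c$ number $\omega(c)\ll\log c/\log\log c\ll\log Q$ (as $c\le Q^{4}$), each of size $\ll\log p\ll\log Q$, so altogether $\ll\log^{2}Q$; adjoining the higher prime powers costs a further $O(\log Q)$ (the exponent there is $\ge1-O(1/\log Q)$). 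It remains to estimate $\sum_{n}\Lambda(n)\Psi(n)n^{-\alpha}\widehat{F}_j(-\log n/\log Q)$. Writing $\widehat{F}_j(-\log n/\log Q)=\frac{1}{2\pi i}\int_{(1)}\widetilde{G}(w)n^{-w}\,dw$, where $\widetilde{G}$ is the Mellin transform of $t\mapsto\widehat{F}_j(-\log t/\log Q)$, this sum equals $-\frac{1}{2\pi i}\int_{(1)}\widetilde{G}(w)\,\frac{L'}{L}(\alpha+w,\Psi)\,dw$. Now shift the contour to $\Rep(\alpha+w)=\tfrac12+\tfrac{1}{\log Q}$: since $\Psi$ is nonprincipal and GRH holds for $L(s,\Psi)$, $\frac{L'}{L}(\cdot,\Psi)$ is holomorphic in $\Rep s>\tfrac12$, so no residue is crossed, and on the new line one has the standard GRH bound $\bigl|\frac{L'}{L}(\tfrac12+\tfrac{1}{\log Q}+iu,\Psi)\bigr|\ll\log^{2}\!\bigl(Q(2+|u|)\bigr)$ (using $d\le Q^{4}$). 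On the other hand, the substitution $t=Q^{v}$ gives $\widetilde{G}(w)=\log Q\int_{-\kappa_j}^{\kappa_j}\widehat{F}_j(-v)Q^{vw}\,dv$, which is entire and, since $\widehat{F}_j$ is smooth and compactly supported (so all its derivatives vanish at $\pm\kappa_j$), repeated integration by parts with no boundary terms yields, on our line, $|\widetilde{G}(w)|\ll\log Q$ always and $|\widetilde{G}(w)|\ll_{A}(\log Q)^{1-A}|\Imp w|^{-A}$ once $|\Imp w|\ge1/\log Q$, for every $A$; in particular $\int|\widetilde{G}(w)|\,|dw|\ll1$ and $\int|\widetilde{G}(w)|\log^{2}(2+|\Imp w|)\,|dw|\ll1$. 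Putting $u=\Imp\alpha+\Imp w$ and $\log^{2}(Q(2+|u|))\ll\log^{2}(Q(2+|\Imp\alpha|))+\log^{2}(2+|\Imp w|)$, the integral is $\ll\log^{2}\!\bigl(Q(2+|\Imp\alpha|)\bigr)$, proving the claim.

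To assemble the pieces, fix $\pi\in\Pi_{k+r}$ with $s$ pure $\alpha$‑singletons. Each block $\mathcal B$ with $\mathcal B\cap[k]\ne\emptyset$ contributes a power of $\mathcal{L}_\alpha:=\log(Q(2+|\Imp\alpha|))$ equal to $2$ if $\mathcal B$ is a pure $\alpha$‑singleton and to $|\mathcal B\cap[k]|$ otherwise, while in all cases it accounts for exactly $|\mathcal B\cap[k]|$ elements of $[k]$; hence the total power of $\mathcal{L}_\alpha$ is $2s+(k-s)=k+s\le2k$, and symmetrically the power of $\mathcal{L}_\beta:=\log(Q(2+|\Imp\beta|))$ is at most $2r$. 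Since $\mathcal{L}_\alpha,\mathcal{L}_\beta\ge1$, this yields $T_\pi\ll_{k,r}\mathcal{L}_\alpha^{2k}\mathcal{L}_\beta^{2r}$, and summing over the $O_{k,r}(1)$ partitions gives the lemma; the dominant contribution is the finest partition, matching the heuristic $\bigl(-\tfrac{L'}{L}(\alpha,\Psi)\bigr)^{k}\bigl(-\tfrac{L'}{L}(\beta,\overline{\Psi})\bigr)^{r}$. I expect the main obstacle to be exactly the pure‑singleton estimate: one must beat the trivial size $Q^{\kappa_j/2}$ of $\sum_{p\le Q^{\kappa_j}}\Psi(p)(\log p)p^{-\alpha}$ while keeping the dependence on $\Imp\alpha$ down to a power of $\log(Q(2+|\Imp\alpha|))$, and crude partial summation from the GRH bound $\sum_{p\le x}\Psi(p)\log p\ll\sqrt{x}\,\log^{2}(dx)$ loses a factor $|\Imp\alpha|$ by ignoring the oscillation of $n^{-i\Imp\alpha}$, which is precisely why one must route the estimate through $\frac{L'}{L}(\cdot,\Psi)$ and exploit the rapid decay of the smooth cutoff's Mellin transform.
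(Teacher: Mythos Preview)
Your proposal is correct and follows essentially the same approach as the paper: both unfold the distinctness constraint via M\"obius inversion on $\Pi_{k+r}$, factor the resulting sums over blocks, bound blocks of size $\ge 2$ trivially by a power of $\log Q$, and handle the singleton sums by passing to $-\tfrac{L'}{L}(\cdot,\Psi)$ and invoking the GRH bound $\tfrac{L'}{L}(s,\Psi)\ll\log^2\!\bigl(Q(2+|\Imp s|)\bigr)$ on $\Rep s=\tfrac12+O(1/\log Q)$. The only cosmetic difference is that you phrase the singleton estimate via the Mellin transform of $t\mapsto\widehat F_j(-\log t/\log Q)$, whereas the paper writes the same integral directly as $-\mathcal U\int_{\R}F_\ell\bigl(\mathcal U(v-\tfrac{20i}{\log Q})\bigr)\tfrac{L'}{L}(\alpha+\tfrac{20}{\log Q}+iv,\Psi)\,dv$ using the decay $F_\ell(v+iy)\ll(1+|v|)^{-A}$; these are the same computation in different coordinates, and your more explicit exponent bookkeeping at the end recovers the paper's stated bound.
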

  \begin{proof}
  Define 
  \begin{align*}
   g( p_1, \dots, p_{k+r} )  := &   \prod_{j=1}^k \bigg( \frac{\Psi(p_j)  \log p_j}{p_j^\alpha}\widehat{F}_j \bigg( - \frac{ \log p_j}{\log Q} \bigg) \bigg)  \prod_{j=k+1}^{k+r} \bigg( \frac{\overline{\Psi} (p_j)  \log p_j}{p_j^\beta}\widehat{F}_j \bigg(   \frac{ \log p_j}{\log Q} \bigg) \bigg)
   \end{align*}
  for $ ( p_1 \cdots p_{k+r}, c ) = 1 $, and otherwise,  $ g( p_1, \dots, p_{k+r} )=0 $. Furthermore, we define
 $$ R_{0,\underline{G}}   :=  \sumsh_{ p_1 , \dots, p_{\nu} } g ( \iota_{\underline{G}} ( p_1 , \dots, p_{\nu} ))  $$
   and
   $$ C_{0,\underline{G}} :=   \sum_{ p_1 , \dots, p_{\nu} } g ( \iota_{\underline{G}} ( p_1 , \dots, p_{\nu} )), $$
where $\underline{G} = \{ G_1,...,G_\nu \}  \in \Pi_{ k+r}$, $\iota_{\underline{G}}$, and $\sumsh$ are defined in Section \ref{sec:setup}.  It is clear that
   $$ C_{0,\underline{H}}  = \sum_{ \underline{H} \preceq \underline{G} } R_{0,\underline{G}},$$
  and by Lemma \ref{lemma:cs}, we have
   \es{\label{eqn:usecsambn} \sum_{\substack{\m,\n   \\ (\m,\n)=1 \\ (\m\n, c)=1  }}     \frac{a_\m \Psi (\m) b_\n\overline{\Psi }(\n) }{\m^\alpha \n^\beta } =  R_{0,\underline{O}}  = \sum_{   \underline{G} \in \Pi_{k+r}  } \mu_{k+r} ( \underline{O}, \underline{G}) C_{0,\underline{G}}  .}
For each $\underline{G} =  \{ G_1,...,G_\nu \}  \in \Pi_{ k+r}$, we have
\begin{align*}
C_{0, \underline{G}}  = \prod_{j=1}^\nu \bigg[   \sum_{ (p,c)=1}    \bigg(  \prod_{\substack{ \ell \leq k \\ \ell \in G_j }}     \frac{\Psi(p )  \log p }{p^\alpha}\widehat{F}_\ell  \bigg( - \frac{ \log p }{\log Q} \bigg) \bigg) \bigg(  \prod_{\substack{ k < \ell \leq k+r \\  \ell \in G_j }}   \frac{\overline{\Psi} (p )  \log p }{p^\beta}\widehat{F}_\ell \bigg(   \frac{ \log p }{\log Q} \bigg) \bigg)  \bigg] .
 \end{align*} 
If $|G_j | \geq 3 $, then
$$  \sum_{ (p,c)=1}  \bigg(   \prod_{\substack{ \ell \leq k \\ \ell \in G_j }}    \frac{\Psi(p )  \log p }{p^\alpha}\widehat{F}_\ell  \bigg( - \frac{ \log p }{\log Q} \bigg) \bigg)  \bigg(  \prod_{\substack{ k < \ell \leq k+r \\  \ell \in G_j }}  \frac{\overline{\Psi} (p )  \log p }{p^\beta}\widehat{F}_\ell \bigg(   \frac{ \log p }{\log Q} \bigg) \bigg) =O(1) .$$
When $|G_j| =2$, $\widehat F_\ell$ is compactly supported in $[-\kappa_\ell, \kappa_\ell ],$ where $\sum_\ell \kappa_\ell < 4,$  and it follows that
$$  \sum_{ (p,c)=1}   \bigg(   \prod_{\substack{ \ell \leq k \\ \ell \in G_j }}     \frac{\Psi(p )  \log p }{p^\alpha}\widehat{F}_\ell  \bigg( - \frac{ \log p }{\log Q} \bigg) \bigg) \bigg(   \prod_{\substack{ k < \ell \leq k+r \\  \ell \in G_j }}   \frac{\overline{\Psi} (p )  \log p }{p^\beta}\widehat{F}_\ell \bigg(   \frac{ \log p }{\log Q} \bigg) \bigg) = O\left( (\log Q)^2 \right) .$$
Hence
\begin{align*}
C_{0, \underline{G}}  \ll   \prod_{\substack{  G_j= \{\ell\}  \\ \ell \leq k   } }  \bigg| \sum_{ (p,c)=1}         \frac{\Psi(p )  \log p }{p^\alpha}\widehat{F}_\ell  \bigg( - \frac{ \log p }{\log Q} \bigg) \bigg|
 \prod_{\substack{  G_j= \{\ell\}  \\ k< \ell \leq k+r   } }  \bigg| \sum_{ (p,c)=1}         \frac{\overline{\Psi}(p )  \log p }{p^\beta}\widehat{F}_\ell  \bigg(   \frac{ \log p }{\log Q} \bigg) \bigg| (\log Q)^{2 g} ,
 \end{align*} 
 where $g$ is the number of $j$ such that $|G_j| = 2.$
 When Re$(s) \geq \frac{1}{2} + \frac{1}{\log Q}$, it is known that under GRH,
 $$ \frac{L'}{L} ( s, \Psi) = O\bigg( \log^2 \big(Q ( 2+ |\Imp (s)| )\big)\bigg)$$
(e.g. Chapter 19 in \cite{Da}). By Fourier inversion formula, the fact that $ \widehat{F}_\ell $ is supported in $[- \kappa_\ell , \kappa_\ell ] $, and the integration by parts, we have for $|y| \leq 1,000$,
 $$ F_\ell ( v+iy)   = \int_\R \widehat{F}_\ell (w) e^{- 2 \pi i wv} e^{  2 \pi  w y} dw \ll  \frac{ 1 }{ 1 + |v|^A } $$
 for any nonnegative integer $A$. 
 Because $\Psi$ is a non-principal character and using the bound above, we have
 \begin{equation}\label{eqn:pr char sum}\begin{split}
\sum_{ (p,c)=1}   &       \frac{\Psi(p )  \log p }{p^\alpha}\widehat{F}_\ell  \bigg( - \frac{ \log p }{\log Q} \bigg) \\
& = \sum_n \frac{\Psi(n )  \Lambda(n) }{n^\alpha}\widehat{F}_\ell  \bigg( - \frac{ \log n }{\log Q} \bigg) + O\left( \log Q\right) \\
&=-\mathcal U \int_\R F_\ell \left( \, \mathcal U \bigg( v-\tfrac{20i}{\log Q} \bigg)\right)  \frac{L'}{L} \left( \tfrac{20}{\log Q} + \alpha + iv  , \Psi \right)  dv  + O( \log Q)\\
& \ll 
\mathcal U \int_\R  \frac{ 1 }{1 + \mathcal U^A |v|^A}  \log^2\big( Q ( 2 + |\Imp (\alpha) | + |v|  )\big) dv \\
&\ll \log^2\big(Q(2+ |\Imp \alpha |) \big) . 
\end{split}\end{equation}
 Therefore,
$$C_{0, \underline{G}}  \ll \bigg(\log \big(Q( 2+|\Imp (\alpha) |)\big)\bigg)^{2k}  \bigg(\log \big( Q(2 + |\Imp (\beta) |\big) \bigg)^{2r}, $$
and the lemma follows from the above and Equation (\ref{eqn:usecsambn}).

  \end{proof}

   \begin{lemma}\label{lemma:prime sum}
    Assume RH and that $ F: \R \to \R $ is smooth and rapidly decreasing with $\hat{F} $ compactly supported.   
    Define 
$$ R_\pm (\alpha, F ) =    \sum_p  \frac{ \log p }{ p^{\alpha }} \widehat{F} \bigg( \pm   \frac{ \log p }{ \log Q} \bigg) - F( \pm  i \mathcal{U} ( 1-\alpha  )) \log Q.$$
Then
\begin{align*}
R_\pm ( \alpha, F) = 
 -\log Q \int_{-\infty}^0   \hat{F}( \pm w) Q^{(1-\alpha)w}dw  + O(1) + O \bigg(  \bigg( \frac{1}{ \log Q} + |\alpha| \bigg)  ( \Rep (\alpha) - 1/2)^{-3}    \bigg)   
\end{align*}
for $\Rep (\alpha) \geq 1/2 + 10/\log Q $, and
\begin{align*}
R_\pm ( \alpha, F) =  O( ( \log Q)^2 )
\end{align*}
for $ | \Rep ( \alpha) - 1/2| \leq 10/\log Q$.
                        \end{lemma}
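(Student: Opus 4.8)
The plan is a partial summation argument built on the prime number theorem in the strong form available under RH: $\theta(t):=\sum_{p\le t}\log p=t+E(t)$ with $E(t)\ll\sqrt t\,(\log(t+2))^2$. Writing
$$\sum_p\frac{\log p}{p^\alpha}\widehat F\Big(\pm\frac{\log p}{\log Q}\Big)=\int_{1^-}^{\infty}t^{-\alpha}\widehat F\Big(\pm\frac{\log t}{\log Q}\Big)\,d\theta(t),$$
and observing that, since $\widehat F$ is supported in $[-\kappa,\kappa]$, every integrand here vanishes outside $1\le t\le Q^\kappa$, the ``smooth'' piece becomes, after $t=e^u$ and then $u=w\log Q$,
$$\int_1^\infty t^{-\alpha}\widehat F\Big(\pm\tfrac{\log t}{\log Q}\Big)\,dt=\log Q\int_0^\kappa\widehat F(\pm w)\,Q^{(1-\alpha)w}\,dw .$$

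The crucial bookkeeping is to identify the subtracted term as the complementary half of this integral. As $\widehat F$ is compactly supported, $F$ extends to an entire function with $F(z)=\int_{\R}\widehat F(w)e^{-2\pi i w z}\,dw$; evaluating at $z=\pm i\,\mathcal{U}(1-\alpha)$ and using $2\pi\mathcal{U}=\log Q$ gives $F\big(\pm i\,\mathcal{U}(1-\alpha)\big)\log Q=\log Q\int_{-\kappa}^{\kappa}\widehat F(\pm w)\,Q^{(1-\alpha)w}\,dw$. Subtracting this from the smooth piece, the ``$[0,\kappa]$'' parts cancel and what remains is $-\log Q\int_{-\kappa}^{0}\widehat F(\pm w)Q^{(1-\alpha)w}\,dw=-\log Q\int_{-\infty}^{0}\widehat F(\pm w)Q^{(1-\alpha)w}\,dw$, exactly the asserted main term. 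Hence, up to an $O(1)$ from the range $t\in[1,2)$,
$$R_\pm(\alpha,F)=-\log Q\int_{-\infty}^{0}\widehat F(\pm w)Q^{(1-\alpha)w}\,dw+O(1)+\int_{2^-}^{Q^\kappa}t^{-\alpha}\widehat F\Big(\pm\tfrac{\log t}{\log Q}\Big)\,dE(t),$$
and everything reduces to bounding the error integral.

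For $\Rep(\alpha)\ge\tfrac12+\tfrac{10}{\log Q}$ I would integrate by parts. The boundary terms are $O(1)$ (using $E(1^-)=-1$ and that $\widehat F$ vanishes at the endpoints of its support, as in the applications), and since $\frac{d}{dt}\big(t^{-\alpha}\widehat F(\pm\tfrac{\log t}{\log Q})\big)\ll\big(|\alpha|+\tfrac1{\log Q}\big)t^{-\Rep(\alpha)-1}$, the remaining integral is
$$\ll\Big(|\alpha|+\tfrac1{\log Q}\Big)\int_1^{\infty}(\log t)^2\,t^{-\Rep(\alpha)-1/2}\,dt\ll\Big(|\alpha|+\tfrac1{\log Q}\Big)\big(\Rep(\alpha)-\tfrac12\big)^{-3},$$
which gives the first assertion. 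For $|\Rep(\alpha)-\tfrac12|\le\tfrac{10}{\log Q}$ this bound is useless, so there I would instead pass to the Mellin/contour form: by Fourier inversion $\sum_n\Lambda(n)n^{-\alpha}\widehat F(\pm\tfrac{\log n}{\log Q})$ — which differs from the prime sum by $O(\log Q)$ — equals an integral of $F$ against $-\zeta'/\zeta$ on a vertical line where the Dirichlet series converges absolutely; shifting that line to $\Rep(s)=\tfrac12+\tfrac{11}{\log Q}$ one crosses only the simple pole of $\zeta$ at $s=1$, whose residue reproduces precisely $F\big(\pm i\,\mathcal{U}(1-\alpha)\big)\log Q$, and the shifted integral is bounded, using the RH estimate $\zeta'/\zeta(s)\ll(\log Q)\log(2+|\Imp(s)|)$ on that line together with the rapid decay of $F$ on horizontal lines, by $O((\log Q)^2)$.

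The step I expect to be the main obstacle is this error estimate near the critical line: the $\big(\Rep(\alpha)-\tfrac12\big)^{-3}$ bound genuinely diverges as $\Rep(\alpha)\to\tfrac12$, which is what forces the passage to the $\zeta'/\zeta$-contour argument and the use of the RH bound for $\zeta'/\zeta$ arbitrarily close to the critical line. The other ingredients — verifying that the residue at $s=1$ of the contour integral is exactly $F\big(\pm i\,\mathcal{U}(1-\alpha)\big)\log Q$, and that the Fourier identity for the subtracted term carries the correct normalisation — are bookkeeping and should be routine.
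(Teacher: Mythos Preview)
Your proposal is correct and follows essentially the same route as the paper: for $\Rep(\alpha)\ge 1/2+10/\log Q$ the paper also writes the sum as $\int_1^\infty v^{-\alpha}\widehat F(-\tfrac{\log v}{\log Q})\,d\vartheta(v)$, uses $\vartheta(v)=v+O(\sqrt v(\log v)^2)$ under RH, and integrates the error by parts to get the $(\Rep(\alpha)-1/2)^{-3}$ bound, while for $|\Rep(\alpha)-1/2|\le 10/\log Q$ it simply refers back to the $\zeta'/\zeta$--contour argument of \eqref{eqn:pr char sum}, which is exactly your Mellin/shift argument with the pole at $s=1$ producing the subtracted $F(\pm i\,\mathcal U(1-\alpha))\log Q$. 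One minor point: the RH bound you quote should read $\zeta'/\zeta(s)\ll\big(\log(Q(2+|\Imp s|))\big)^2$ on the line $\Rep(s)=1/2+O(1/\log Q)$ rather than $(\log Q)\log(2+|\Imp s|)$, but this does not affect the $O((\log Q)^2)$ conclusion.
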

\begin{proof}
  Since $ F : \R \to \R $, we have $ \overline{\hat{F}(w)} = \hat{F}( -w)$ for $ w \in \R$. Therefore it is enough to consider only the negative case.
  

  When $ | \Rep (\alpha) - 1/2 | \leq 10 / \log Q$,  by similar arguments to  \eqref{eqn:pr char sum}, we obtain that
 $$    \sum_p  \frac{ \log p }{ p^{\alpha }} \widehat{F} \bigg(  - \frac{ \log p }{ \log Q} \bigg) = F( i \mathcal{U} (\alpha-1)) \log Q +O( ( \log Q)^2 )  .$$

 Now we prove the first assertion. Assume that  $ \Rep( \alpha) \geq 1/2 + 10/\log Q $. By the prime number theorem of the form 
 $$ \vartheta(x) := \sum_{ p \leq  x } \log p = x + O( \sqrt{x} ( \log x)^2 )   $$
under RH, we have
 \begin{align*}
    \sum_p & \frac{ \log p }{ p^{\alpha }} \widehat{F} \bigg(  - \frac{ \log p }{ \log Q} \bigg) \\
     & = \int_1^\infty      \frac{ 1 }{ v^{\alpha }} \widehat{F} \bigg(  - \frac{ \log v }{ \log Q} \bigg) dv + \int_1^\infty      \frac{ 1 }{ v^{\alpha }} \widehat{F} \bigg(  - \frac{ \log v }{ \log Q} \bigg) d( \vartheta(v) - v) \\
    & = \log Q \int_{-\infty}^0 \hat{F}(w) Q^{(\alpha-1)w}dw  + O(1)+O \bigg(  \bigg( \frac{1}{ \log Q} + |\alpha| \bigg)  \int_1^{\infty} v^{- \alpha - 1/2 } ( \log v )^2 dv      \bigg) \\
    & = F( i \mathcal{U} (\alpha -1) ) \log Q -\log Q \int_{-\infty}^0   \hat{F}( - w) Q^{(1-\alpha)w}dw   \\
& \quad  + O(1) + O \bigg(  \bigg( \frac{1}{ \log Q} + |\alpha| \bigg)  ( \Rep (\alpha) - 1/2)^{-3}    \bigg)  .
 \end{align*}

\end{proof}

\begin{lemma}\label{lemma:Fizw1w2}
Let $ w_1 , w_2 $ be complex numbers with $\Rep (w_1) = \delta_1  < \Rep (w_2) = \delta_2$.  Let $F : \R \to \R$ be a smooth and rapidly decreasing function with compactly supported $\hat{F}$. Then 
\est{ 
  \frac{1}{2\pi i } &  \int_{ (\delta  )} F (i z  ) \bigg(  \frac{1}{z  - w_1        }  -    \frac{1}{  z - w_2     }  \bigg) dz \\ 
  & =  \left\{\begin{array}{ll}
  	\int_0^\infty   \hat{F}(-u) e^{-2 \pi u w_2} du - \int_0^\infty   \hat{F}(-u) e^{-2 \pi u w_1}  \> du  & \ \ \ \mathrm{if}~~ \delta  < \delta_1 \\
  	& \\
  	 \int_{-\infty}^0    \hat{F}(-u) e^{-2 \pi u w_1} du +  \int_0^\infty   \hat{F}(-u) e^{-2 \pi u w_2} \> du  & \ \ \  \mathrm{if}~~ \delta_1 < \delta  < \delta_2 \\
  	 &  \\
  	 \int_{-\infty}^0    \hat{F}(-u) e^{-2 \pi u w_1} du - \int_{-\infty}^0    \hat{F}(-u) e^{-2 \pi u w_2} \> du  & \ \ \ \mathrm{if}~~ \delta_2  < \delta.
  \end{array} \right.  
}
\end{lemma}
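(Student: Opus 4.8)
The statement to prove is Lemma~\ref{lemma:Fizw1w2}, a contour-shifting identity. Here is how I would organize the proof.

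\textbf{Plan of proof.}
The plan is to compute the contour integral $\frac{1}{2\pi i}\int_{(\delta)} F(iz)\left(\frac{1}{z-w_1} - \frac{1}{z-w_2}\right)dz$ by two complementary moves: (i) expressing $F(iz)$ via its Fourier expansion so that, on a vertical line, the integrand becomes something whose $z$-integral can be done explicitly, and (ii) accounting for the residues picked up when the line of integration $\Rep(z)=\delta$ sits on one side or the other of the poles $z=w_1$ and $z=w_2$. First I would record the elementary fact that for a single simple pole, with $g(z) := \frac{1}{z-w}$ and the line $\Rep(z) = \delta$, one has the clean evaluation
\es{ \label{eqn:proofplan single pole}
\frac{1}{2\pi i}\int_{(\delta)} F(iz)\,\frac{dz}{z-w} = \begin{cases} \displaystyle\int_0^\infty \hat F(-u)\, e^{-2\pi u w}\, du & \text{if } \delta < \Rep(w), \\[1ex] \displaystyle -\int_{-\infty}^0 \hat F(-u)\, e^{-2\pi u w}\, du & \text{if } \delta > \Rep(w). \end{cases}
}
Granting \eqref{eqn:proofplan single pole}, the lemma follows immediately by linearity: subtract the two evaluations according to whether $\delta$ lies below both $\delta_1,\delta_2$, between them, or above both, and collect terms. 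The three displayed cases in the statement are exactly these three regimes.

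\textbf{Proof of the single-pole formula \eqref{eqn:proofplan single pole}.}
For this I would substitute $z = \delta + it$, $t\in\R$, write $F(iz) = F(i\delta - t)$, and use Fourier inversion $F(x) = \int_\R \hat F(w) e^{-2\pi i w x}\,dw$ — valid since $\hat F$ is compactly supported and $F$ is smooth and rapidly decreasing. After interchanging the (absolutely convergent) integrals, the inner $t$-integral is of the form $\int_\R \frac{e^{-2\pi i w(i\delta - t)}}{(\delta - \Rep(w)) + i(t - \Imp(w))}\,dt$, a standard integral that evaluates, by closing the contour in the upper or lower half-plane according to the sign of $w$ and the sign of $\delta - \Rep(w)$, to a constant times $e^{-2\pi u w}\mathbf 1[\pm u > 0]$ with the appropriate sign. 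Equivalently — and this is probably the cleaner exposition — I would avoid Fourier inversion entirely and argue by shifting the contour: if $\delta < \Rep(w)$, push the line $\Rep(z) = \delta$ to $\Rep(z) = +\infty$; the integrand $F(iz)/(z-w)$ decays rapidly in the right half-plane (since $F(iz) = F(i\Rep(z) - \Imp(z))$ decays as $|\Imp(z)|\to\infty$, and one needs the a priori bound $F(v + iy) \ll (1+|v|)^{-A}$ for bounded $|y|$, which follows from compact support of $\hat F$ as already used in the proof of Lemma~\ref{lemma:primecharsum1}), and one picks up $-\Res_{z=w}$, then re-expands $1/(z-w)$ near nothing — actually the cleanest route back to the stated closed form is precisely the Fourier one, so I would present the $t$-integral computation carefully and identify the answer with $\int_0^\infty \hat F(-u)e^{-2\pi u w}du$ by the change of variables matching $\hat F(w)\mapsto \hat F(-u)$.

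\textbf{Assembling the three cases and the main obstacle.}
Once \eqref{eqn:proofplan single pole} is in hand, the three cases are pure bookkeeping: for $\delta < \delta_1 < \delta_2$ both poles are to the right, giving $\int_0^\infty \hat F(-u)(e^{-2\pi u w_2} - e^{-2\pi u w_1})\,du$; for $\delta_1 < \delta < \delta_2$ the $w_1$-pole is to the left and the $w_2$-pole to the right, giving $\int_{-\infty}^0 \hat F(-u) e^{-2\pi u w_1}\,du + \int_0^\infty \hat F(-u) e^{-2\pi u w_2}\,du$; and for $\delta_2 < \delta$ both are to the left. I expect the only real subtlety to be the justification of all the contour manipulations and interchanges of integration — specifically, verifying that $F(iz)/(z-w)$ has enough decay as $|\Imp(z)|\to\infty$ uniformly on vertical strips to permit shifting the line of integration and to permit Fubini. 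This is exactly the kind of decay estimate ($F(v+iy) \ll_A (1+|v|)^{-A}$ for $|y|$ bounded) that the paper has already established and used in \eqref{eqn:pr char sum}, so it transfers directly; beyond that the proof is routine residue calculus.
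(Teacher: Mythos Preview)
Your overall strategy --- write $F(iz)=\int_\R \hat F(-u)e^{-2\pi uz}\,du$, swap integrals, and shift the $z$-contour to $\pm\infty$ according to the sign of $u$ --- is exactly the paper's approach. The difference is that you first split off a single-pole identity \eqref{eqn:proofplan single pole} and then add by linearity, whereas the paper keeps the combination $\frac{1}{z-w_1}-\frac{1}{z-w_2}$ intact throughout. That organizational choice is not cosmetic here: it is precisely what makes the interchange of integrals legitimate.

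Concretely, your claim that the double integral for the single pole is ``absolutely convergent'' is false. On $\Rep(z)=\delta$ one has $|e^{-2\pi uz}|=e^{-2\pi u\delta}$, and $\int_{(\delta)}\frac{|dz|}{|z-w|}$ diverges logarithmically, so Fubini does not apply to $\int_\R\hat F(-u)\int_{(\delta)}\frac{e^{-2\pi uz}}{z-w}\,dz\,du$. Your fallback --- push the $F(iz)/(z-w)$ contour to $\Rep(z)=+\infty$ --- also fails: since $\hat F$ has compact support in $[-\kappa,\kappa]$, $F$ is entire of exponential type and $|F(iz)|$ can grow like $e^{2\pi\kappa|\Rep(z)|}$, so the integral on far-right vertical lines need not tend to zero. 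By contrast, $\frac{1}{z-w_1}-\frac{1}{z-w_2}=\frac{w_1-w_2}{(z-w_1)(z-w_2)}=O(|z|^{-2})$, so with the difference kept together the double integral is absolutely convergent and Fubini is immediate; after swapping, for each fixed $u$ the inner $z$-integrand is $O(|z|^{-2})$ and one can shift to $\pm\infty$ as the paper does. Your argument is easily repaired by abandoning the single-pole decomposition and running your Fourier-inversion step on the difference directly; the residue bookkeeping you describe for the three cases then goes through unchanged.
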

\begin{proof}
Applying the Fourier  inversion formula  
 and then changing the order of integrals, we see that
\est{ 
  \frac{1}{2\pi i } \int_{ (\delta  )} F (i z  ) &\bigg(  \frac{1}{z  - w_1        }  -    \frac{1}{  z - w_2     }  \bigg) dz \\
  =  &     \frac{1}{2\pi i } \int_{ (\delta  )}  \int_{-\infty}^\infty \hat{F}(-u) e^{- 2 \pi uz} du    \frac{w_1 - w_2 }{( z  - w_1 )(  z - w_2 )    }   dz   \\
  =&       \int_{-\infty}^\infty     \frac{1}{2\pi i } \int_{ (\delta  )} \hat{F}(-u) e^{- 2 \pi uz}      \frac{w_1 - w_2 }{( z  - w_1 )(  z - w_2 )    }     dz du       \\
  = &        \int_{-\infty}^0  \hat{F}(-u)  \frac{1}{2\pi i } \int_{ (\delta  )}    \bigg(  \frac{e^{- 2 \pi uz} }{z  - w_1        }  -    \frac{e^{- 2 \pi uz}}{  z - w_2     }  \bigg) dz   du \\ 
 & +        \int_0^\infty   \hat{F}(-u)  \frac{1}{2\pi i } \int_{ (\delta  )}    \bigg(  \frac{e^{- 2 \pi uz} }{z  - w_1        }  -    \frac{e^{- 2 \pi uz}}{  z - w_2     }  \bigg) dz   du .} 
For $u \leq 0$ we shift the $z$-integral to $-\infty$; otherwise, we shift the $z$-integral to $\infty$. By picking up residues properly, we can conclude the proof of the lemma. 
\end{proof}

\section{ Extracting the main contribution of  $C_{1, \underline  {G}}$} \label{sec:refineterms}

We recall from Equation (\ref{def:C1Fg}) that
 \begin{align*}
 C_{1,\underline{G}}  =
 &  \sum_{  S_1 + \cdots + S_4=  [ \nu ]  }   \bigg( \prod_{\ell \in S_3 } \widehat{F}_\ell (0) \bigg) \frac{  (-1)^{|S_1|+ |S_2|}}{(\log Q)^{|S_1|+ |S_2|}}  \\
 &     \times  \sum_q \frac{ \W(q/Q)}{\varphi(q)}  \sumstar_{\chi \smod q} \  \sumtwo_{\m,\n }  \frac{a_\m(S_1) \chi(\m) }{\sqrt{\m }}  \frac{b_\n(S_2) \bar{\chi}(\n) }{ \sqrt{\n }}  \int_\R   \bigg( \frac{ \n}{\m} \bigg)^{it} \prod_{\ell \in S_4 } E_\ell (t)    \emts dt .
\end{align*}
 In this section, we will extract the main contribution of $C_{1, \underline  {G}}$, and the first steps are to show that the main contribution comes from the following terms.
\begin{itemize}
	\item  $\m, \n \leq Q^2$; 
    \item $S_4 = \emptyset$ ;
    \item $\m, \n$ are squarefree. 
\end{itemize}

 This length of $\m$  and $\n$ is optimal for the large sieve inequality in Lemma \ref{lemma:lsi} as the size of the family of $L$-functions is $Q^2$. We will show how to truncate the sums over $\m, \n$ in Lemma \ref{lem:truncatetoQsq}. The integration over $t$ is an important ingredient to balance the size of $\m, \n$. Moreover, we will use the Cauchy-Schwarz inequality and the large sieve inequality to show that the contribution from other terms is small. 

 Next we extract the main ``diagonal" terms:
\begin{itemize}
	\item $S_1 = S_2 = \emptyset$.  (Equation \eqref{eqn:S1S2empty})
	\item $S_1$ and $S_2$ are not empty sets, and $\m = \n$. (Lemma \ref{lem:diagonalCD}).  These terms are easy to evaluate from the prime number theorem and partial summation. 
\end{itemize}
We will also describe how the main contribution from ``off-diagonal" terms looks like, and it will be calculated in the next section.

\begin{lemma} \label{lem:truncatetoQsq}Let all notations be defined as in Section \ref{sec:setup}. Then
\begin{align*}
 C_{1,\underline{G}}  =
 &   \sum_{  S_1 + \cdots + S_4=  [ \nu ]  }   \bigg( \prod_{\ell \in S_3 } \widehat{F}_\ell (0) \bigg) \frac{  (-1)^{|S_1|+ |S_2|}}{(\log Q)^{|S_1|+ |S_2|}}  \\
 &     \times  \sum_q \frac{ \W(q/Q)}{\varphi(q)}  \sumstar_{\chi \smod q} \  \sumtwo_{\m,\n \leq Q^2 }  \frac{a_\m(S_1) \chi(\m) }{\sqrt{\m }}  \frac{b_\n(S_2) \bar{\chi}(\n) }{ \sqrt{ \n }}  \int_\R   \bigg( \frac{ \n}{\m} \bigg)^{it} \prod_{\ell \in S_4 } E_\ell (t)    \emts dt   \\
 & + O( e^{- \tfrac{\varepsilon^2}{6} ( \log Q)^2 }),
\end{align*}
 where $ E_\ell(t) := E_{F_\ell} (t)$ is defined in \eqref{EFt}. 
\end{lemma}
\begin{proof}
As previously mentioned, each $\widehat{F}_\ell ( u_\ell )$ is supported in   $ |u_\ell | \leq \kappa_\ell :=\sum_{i \in G_\ell} \eta_i$. Thus,  $m_\ell  \leq Q^{\kappa_\ell}$  for $ \ell \in S_1 $,  $n_\ell  \leq Q^{\kappa_\ell}$  for $ \ell \in S_2 $ and
$$ \m =   \prod_{ \ell \in S_1} m_\ell  \leq Q^{\kappa(S_1) }, \quad   \n =   \prod_{ \ell \in S_2} n_\ell \leq Q^{\kappa(S_2) },$$
  where $ \kappa(S_1) := \sum_{ \ell \in S_1 } \kappa_\ell $ and $ \kappa(S_2) := \sum_{ \ell \in S_2 } \kappa_\ell $.  Note that $ \sum_{\ell=1}^\nu  \kappa_\ell = \sum_{i=1}^n \eta_i = \eta \leq 4-\varepsilon $. 

 The bound in \eqref{EFt} is not sufficient to prove the lemma, so we shall use the Fourier inversion formula.  The Fourier transform of  $F_\ell (\mathcal U( u -t)) (1+u^2) $ is 
\begin{align*}
\int_\R F_\ell \big(\, \mathcal U ( u -t)\big) (1+u^2)  e^{2 \pi i uv } du 
& =e^{ 2 \pi i tv}  \int_\R F_\ell \big(\, \mathcal U  u \big) (1+t^2 + 2tu + u^2 )  e^{2 \pi i uv }du  \\
& = e^{ 2 \pi i tv} \bigg(  \frac{1+t^2}{\mathcal U} \widehat{F}_\ell \left(\frac{v}{\mathcal U}\right) + \frac{  t }{ \pi i \, \mathcal U^2 }   \widehat{F}_\ell' \left(\frac{v}{\mathcal U}\right) - \frac{1}{4 \pi^2\, \mathcal U^3 }   \widehat{F}_\ell''\left(\frac{v}{\mathcal U}\right) \bigg),
\end{align*}
so for each $\ell \in S_4$, we have
  \es{ \label{Eellt}
E_\ell ( t) &  =  \frac{1}{ 2 \pi    }  \int_\R  F_\ell \big( \, \mathcal U( u -t)\big) (1+u^2)   \Rep \bigg[ \frac{\Gamma'}{\Gamma} \bigg( \frac12 \bigg( \frac12+iu +\kappa \bigg) \bigg)   \bigg] \frac{du}{1+u^2} \\
&  = \frac{1}{ 2 \pi    }  \int_\R \bigg( \int_\R   e^{ 2 \pi i tv} \bigg(  \frac{1+t^2}{\mathcal U} \widehat{F}_\ell \left(\frac{v}{\mathcal U}\right) + \frac{  t }{ \pi i \, \mathcal U^2 }   \widehat{F}_\ell' \left(\frac{v}{\mathcal U}\right) - \frac{1}{4 \pi^2\, \mathcal U^3 }   \widehat{F}_\ell''\left(\frac{v}{\mathcal U}\right) \bigg) e^{- 2 \pi i vu} dv  \bigg) \\
&\hskip 1in \times  \Rep \bigg[ \frac{\Gamma'}{\Gamma} \bigg( \frac12 \bigg( \frac12+iu +\kappa \bigg) \bigg)   \bigg] \frac{du}{1+u^2} \\
&  = \frac{1}{ 2 \pi    }  \int_\R \bigg( \int_{-\kappa_\ell}^{\kappa_\ell}    e^{ 2 \pi i tv_\ell  \, \mathcal U } \bigg(  (1+t^2 ) \widehat{F}_\ell (v_\ell  ) + \frac{  t }{ \pi i \, \mathcal U }   \widehat{F}_\ell' (v_\ell ) - \frac{1}{4 \pi^2 \, \mathcal U^2 }   \widehat{F}_\ell''(v_\ell  ) \bigg) e^{- 2 \pi i v_\ell u\, \mathcal U} dv_\ell   \bigg) \\
&\hskip 1 in \times \Rep \bigg[ \frac{\Gamma'}{\Gamma} \bigg( \frac12 \bigg( \frac12+iu +\kappa \bigg) \bigg)   \bigg] \frac{du}{1+u^2}.
}
Hence the $t$-integral 
$$ \int_\R   \bigg( \frac{ \n}{\m} \bigg)^{it} \prod_{\ell \in S_4 } E_\ell (t)    \emts dt$$ 
in $C_{1, \underline{G}} $ is a combination of 
$$ \int_\R \bigg( \frac{ \n}{\m} \bigg)^{it}  t^{A_1} e^{  i t  v(S_4 ) \log Q}   \emts dt $$
with a nonnegative integer $A_1 $, where 
$$ v(S_4 ) :=  \sum_{ \ell \in S_4} v_\ell $$
satisfying 
 $$ |v(S_4)| \leq \kappa(S_4 ) .$$ 
It is known that 
\es{\label{eqn:Fourierexsq} \int_{-\infty}^{\infty} e^{-x^2} e^{i\xi x}\> dx = \sqrt{\pi} e^{-\xi^2/4}.}
Taking $j^{th}$ derivative with respect to $\xi$ on both sides, we obtain that 
$$ \int_{-\infty}^{\infty} (ix)^j e^{-x^2} e^{i\xi x}\> dx =  e^{-\xi^2/4}  {P}_j(\xi),$$
where  ${P}_j(\xi)$ is an $j$-degree polynomial function. Therefore
$$  \int_\R \bigg( \frac{ \n}{\m} \bigg)^{it}  t^{A_1} e^{  i t  v(S_4 ) \log Q}   \emts dt = i^{-A_1} e^{ - \tfrac14 \big(   v(S_4 ) \log Q + \log \tfrac {\n}{\m}  \big)^2 } {P}_{A_1} \bigg( v(S_4 ) \log Q + \log \frac{\n}{\m} \bigg) .$$ 
If $ a_\m (S_1 ) \neq 0 $ for $ \m= \prod_{ \ell \in S_1 } m_\ell > Q^2 $, then $\kappa(S_1) > 2 $. Since $ \kappa(S_1 ) + \kappa (S_2 ) + \kappa (S_4) \leq 4 - \varepsilon$, it follows that
$$ \kappa (S_2 ) + \kappa (S_4) < 2 - \varepsilon, $$
and so
$$ \left| v(S_4) \log Q + \log \frac \n\m \right| =  \log \m - \log ( \n Q^{v(S_4)}) \geq \log Q^2 - \log Q^{\kappa(S_2 ) + \kappa(S_4 )} \geq \varepsilon \log Q . $$ 
Hence,
$$  \int_\R \bigg( \frac{ \n}{\m} \bigg)^{it}  t^{A_1} e^{  i t  v(S_4 ) \log Q}   \emts dt \ll (\log Q)^{ A_1 }  e^{ - \tfrac{ \varepsilon^2}{ 4} ( \log Q)^2 } \ll e^{ - \tfrac{ \varepsilon^2}{ 5} ( \log Q)^2 } .$$
Inserting the above bound in (\ref{Eellt}) and (\ref{def:C1Fg}), we obtain that the contribution from the terms $ \m > Q^2 $ is 
$$ \ll  Q^{A_2} e^{ - \tfrac{ \varepsilon^2}{ 5} ( \log Q)^2 } \ll e^{ - \tfrac{ \varepsilon^2}{ 6} ( \log Q)^2 }$$
for some constant $A_2 >0$.  The similar arguments can be applied to the terms $\n > Q^2 $, and this concludes the proof of the lemma.

\end{proof}

Next, we will show that the main contribution of $C_{1, \underline {G}} $ comes from terms with $S_4 = \emptyset$. 
\begin{lemma} \label{lem:S4empty} Let all notations be defined as in Section  \ref{sec:setup}.   Then
\begin{align*}
C_{1,\underline{G}} 
=& \sum_{  S_1 + S_2 + S_3=  [ \nu ]  }   \bigg( \prod_{\ell \in S_3 } \widehat{F}_\ell (0) \bigg) \frac{  (-1)^{|S_1|+ |S_2|}}{(\log Q)^{|S_1|+ |S_2|}}  \\
&       \sum_q \frac{ \W(q/Q)}{\varphi(q)}  \sumstar_{\chi \smod q} \  \sumtwo_{\m,\n \leq Q^2 }  \frac{a_\m(S_1) \chi(\m) }{\sqrt{\m }}  \frac{b_\n(S_2) \bar{\chi}(\n) }{ \sqrt{ \n }}  \int_\R   \bigg( \frac{ \n}{\m} \bigg)^{it}     \emts dt    + O \left(\frac Q{\log Q}\right).
\end{align*}
\end{lemma}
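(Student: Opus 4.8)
The plan is to start from Lemma~\ref{lem:truncatetoQsq}, whose conclusion already restricts $\m,\n \le Q^2$ up to an error $O(e^{-\frac{\varepsilon^2}{6}(\log Q)^2})$, which is negligible compared with $Q/\log Q$. In the resulting sum over $[\nu] = S_1 + S_2 + S_3 + S_4$, the terms with $S_4 = \emptyset$ are precisely the asserted main term, so it is enough to bound each term with $S_4 \ne \emptyset$ by $O(Q/\log Q)$; there are only $O_\nu(1)$ of them, so summing is harmless.

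Fix such a term and fix $t \in \R$. First I would use \eqref{EFt} to get $|E_\ell(t)| \ll \log(2+|t|)/\log Q$ uniformly in $\chi$ (the character enters $E_\ell$ only through the parity $\kappa_\chi$, which does not affect the bound), so that $\prod_{\ell\in S_4}|E_\ell(t)| \ll (\log(2+|t|)/\log Q)^{|S_4|}$ may be pulled out of the sum over $q$ and $\chi$. For the remaining character sum I would apply the Cauchy--Schwarz inequality in $\chi$ (with weights $|\W(q/Q)|/\varphi(q)$) to separate the $\m$- and $\n$-sums, and then the large sieve inequality of Lemma~\ref{lemma:lsi} with $M = 0$, $N = \lceil Q^2\rceil$, so that the factor $Q + N/Q$ is $\ll Q$ (using $\supp \W \subseteq [1,2]$). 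This yields
\[
\sum_q \frac{|\W(q/Q)|}{\varphi(q)} \sumstar_{\chi \smod q}\Big|\sum_{\m \le Q^2}\frac{a_\m(S_1)\chi(\m)}{\m^{1/2+it}}\Big|^2 \ll Q \sum_{\m \le Q^2}\frac{|a_\m(S_1)|^2}{\m},
\]
and likewise for $b_\n(S_2)$. To finish this step I would bound $\sum_{\m \le Q^2}|a_\m(S_1)|^2/\m \ll (\log Q)^{2|S_1|}$: since $|a_\m(S_1)| \ll \Lambda^*_{|S_1|}(\m)$, the $|S_1|$-fold Dirichlet convolution of $\Lambda$ with itself, this follows from the standard mean value bound $\sum_{\m \le x}\Lambda^*_k(\m)^2/\m \ll_k (\log x)^{2k}$, whose dominant contribution comes from $\m$ a product of $k$ distinct primes; similarly $\sum_{\n \le Q^2}|b_\n(S_2)|^2/\n \ll (\log Q)^{2|S_2|}$.

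Combining, the inner sum over $q,\chi,\m,\n$ for fixed $t$ is $\ll (\log(2+|t|)/\log Q)^{|S_4|}\, Q\,(\log Q)^{|S_1|+|S_2|}$. I would then integrate against $\emts$, using $\int_\R (\log(2+|t|))^{|S_4|}\emts\,dt = O_\nu(1)$, and multiply by the normalizing factor $\big(\prod_{\ell\in S_3}|\widehat F_\ell(0)|\big)(\log Q)^{-|S_1|-|S_2|} \ll (\log Q)^{-|S_1|-|S_2|}$; the term is then $\ll Q\,(\log Q)^{-|S_4|} \le Q/\log Q$ since $|S_4| \ge 1$. Summing over the finitely many choices of $(S_1,S_2,S_3,S_4)$ with $S_4 \ne \emptyset$ and adding back the error from Lemma~\ref{lem:truncatetoQsq} gives the claim.

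No single step is deep here; the part that requires care is the accounting of powers of $\log Q$. One must check that the normalization $(\log Q)^{-|S_1|-|S_2|}$ exactly cancels the growth $(\log Q)^{|S_1|+|S_2|}$ produced by the large sieve together with the second-moment bounds for $a_\m(S_1)$ and $b_\n(S_2)$, leaving only the genuine saving $(\log Q)^{-|S_4|}$ coming from the factors $E_\ell(t)$. This is why it is important to have the sharp estimate $\sum_{\m \le x}\Lambda^*_k(\m)^2/\m \ll_k (\log x)^{2k}$ rather than a cruder divisor-function bound, and also why the truncation to $\m,\n \le Q^2$ carried out in Lemma~\ref{lem:truncatetoQsq} is needed, so that the large sieve applies with the acceptable factor $Q + Q^2/Q \ll Q$.
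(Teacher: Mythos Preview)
Your proposal is correct and follows essentially the same approach as the paper: both start from Lemma~\ref{lem:truncatetoQsq}, pull out the bound $|E_\ell(t)|\ll \log(2+|t|)/\log Q$, apply Cauchy--Schwarz over $q,\chi$ followed by the large sieve (Lemma~\ref{lemma:lsi}) with length $Q^2$, and use $\sum_{\m\le Q^2}|a_\m(S_1)|^2/\m \ll (\log Q)^{2|S_1|}$ (and similarly for $b_\n$) to obtain the bound $Q(\log Q)^{-|S_4|}$ for each term with $S_4\ne\emptyset$. Your justification of the second-moment bound via the $k$-fold convolution $\Lambda^*_k$ is slightly more explicit than the paper, which leaves that step implicit.
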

\begin{proof}
By the bound of $E_\ell (t)$ in Lemma \ref{lemma:explicit formula}, we obtain that each component of the main term of $C_{1, \underline G} $ in Lemma \ref{lem:truncatetoQsq} is bounded above by 
\begin{align*}
 &  \int_\mathbb R
        \sum_q \frac{ \W(q/Q)}{\varphi(q)}  \sumstar_{\chi \smod q} \bigg|  \sum_{\m  \leq Q^2 }  \frac{a_\m(S_1) \chi(\m) }{\m^{1/2 + it}}\bigg| \bigg| \sum_{\n   \leq Q^2 }  \frac{b_\n(S_2) \bar{\chi}(\n) }{\n^{1/2 - it}} \bigg|\frac{ \big(\log ( 2 + |t| ) \big)^{|S_4|} }{(\log Q)^{|S_1|+ |S_2| + |S_4|}}  e^{-t^2} \> dt
\end{align*}
Next, we apply the Cauchy-Schwarz inequality and the large sieve inequality (Lemma \ref{lemma:lsi}) and have that the above is bounded by
\begin{align*} 
        &   \int_\mathbb R  \frac{ \big(\log ( 2 + |t| ) \big)^{|S_4|} }{(\log Q)^{|S_1|+ |S_2| + |S_4|}}  e^{-t^2} \\
       & \times \bigg( \sum_q \frac{ \W(q/Q)}{\varphi(q)}  \sumstar_{\chi \smod q} \bigg|  \sum_{\m  \leq Q^2 }  \frac{a_\m(S_1) \chi(\m) }{\m^{1/2 + it}}\bigg|^2 \bigg)^{1/2}   \bigg(\sum_q \frac{ \W(q/Q)}{\varphi(q)}  \sumstar_{\chi \smod q}  \bigg| \sum_{\n   \leq Q^2 }  \frac{b_\n(S_2) \bar{\chi}(\n) }{\n^{1/2 - it}} \bigg|^2 \bigg)^{1/2}  \> dt \\
        &\ll  Q  \int_\mathbb R  \frac{ \big(\log ( 2 + |t| ) \big)^{|S_4|} }{(\log Q)^{|S_1|+ |S_2| + |S_4|}}    e^{-t^2} \bigg(  \sum_{ \m \leq Q^2 }  \frac{ |a_\m(S_1)|^2}{\m }   \sum_{ \n \leq Q^2 }  \frac{ |b_\n (S_2)|^2}{\n }  \bigg)^{1/2} \> dt \\
        &\ll  Q (\log Q)^{- |S_4|} . 
 \end{align*}
 Hence the contribution from $S_4 \neq \emptyset$ is at most $O(Q/\log Q) $.
 \end{proof}

Now we focus on the main term of Lemma \ref{lem:S4empty}. It is clear that the contribution of the case $ S_1 = S_2 = \emptyset$ is 
\begin{equation} \label{eqn:S1S2empty}
   D(\W, Q) \prod_{\ell=1 }^\nu  \widehat{F}_\ell (0).
   \end{equation}
If $S_2 = \emptyset$ but $S_1 \neq \emptyset$, then by (\ref{eqn:Fourierexsq}) the contribution from these terms is bounded by 
\es{ \label{bound:oneSempty} \ll \frac{Q}{\log Q}  \sum_{\m\leq Q^2 }  \frac{ |a_\m(S_1) | }{\sqrt{\m }} \left|  \int_\R   \bigg( \frac{ 1}{\m} \bigg)^{it}     \emts dt  \right| \ll \frac{Q}{\log Q}  \sum_{\m\leq Q^2 }  \frac{|a_\m(S_1)| e^{-(\log \m)^2/4}}{\sqrt{\m }} \ll \frac Q{\log Q}. }
The same holds for the case $ S_1 = \emptyset$ and $S_2 \neq \emptyset $ . Thus we can now consider the case $ S_1 , S_2 \neq \emptyset$. 

By repeated uses of the Cauchy-Schwarz inequality and Lemma \ref{lemma:lsi}, we can add the conditions such as $ \m, \n $ are squarefree with an error $O(Q/\log Q)$. For instance, the contribution of non-squarefree $\m$ is bounded by
\begin{align*} 
&  \frac{ 1}{(\log Q)^{|S_1|+ |S_2| }}  \int_\mathbb R
        \sum_q \frac{ \W(q/Q)}{\varphi(q)}  \sumstar_{\chi \smod q} \bigg|  \sum_{ \substack{ \m  \leq Q^2 \\  \mathrm{non-squarefree}}}  \frac{a_\m(S_1) \chi(\m) }{\m^{1/2 + it}}\bigg| \bigg| \sum_{\n   \leq Q^2 }  \frac{b_\n(S_2) \bar{\chi}(\n) }{\n^{1/2 - it}} \bigg|     e^{-t^2} \> dt\\ 
        &  \leq             \frac{1 }{(\log Q)^{|S_1|+ |S_2|  }}  \int_\mathbb R   e^{-t^2} \bigg( \sum_q \frac{ \W(q/Q)}{\varphi(q)}  \sumstar_{\chi \smod q} \bigg|  \sum_{ \substack{ \m  \leq Q^2 \\  \mathrm{non-squarefree}}}    \frac{a_\m(S_1) \chi(\m) }{\m^{1/2 + it}}\bigg|^2 \bigg)^{1/2}  \\
        & \hfill \cdot \bigg(\sum_q \frac{ \W(q/Q)}{\varphi(q)}  \sumstar_{\chi \smod q}  \bigg| \sum_{\n   \leq Q^2 }  \frac{b_\n(S_2) \bar{\chi}(\n) }{\n^{1/2 - it}} \bigg|^2 \bigg)^{1/2}  \> dt \\
        &\ll  Q   \frac{1 }{(\log Q)^{|S_1|+ |S_2| }} \int_\mathbb R    e^{-t^2} \bigg(  \sum_{ \substack{ \m  \leq Q^2 \\  \mathrm{non-squarefree}}}   \frac{ |a_\m(S_1)|^2}{\m }   \sum_{ \n \leq Q^2 }  \frac{ |b_\n (S_2)|^2}{\n }  \bigg)^{1/2} \> dt .
 \end{align*}
 Then we see that 
$$   \sum_{ \substack{ \m  \leq Q^2 \\  \mathrm{non-squarefree}}}   \frac{ |a_\m(S_1)|^2}{\m }  \leq \sum_{  \substack{ m_1 \cdots m_{|S_1|} \leq Q^2  \\   \mathrm{non-squarefree}} }  \frac{ \Lambda(m_1 )^2 \cdots \Lambda(m_{|S_1|})^2 }{ m_1 \cdots m_{|S_1|}} .$$
Each $m_\ell $ is supported in prime powers. If $m_1\cdots m_{|S_1|}$ is not squarefree, then at least one of the $m_\ell$'s is not squarefree, or all the $m_\ell$'s are squarefree but there are at least two of the $ m_\ell$'s having a common prime factor. Thus, we see that the above sum is 
\begin{align*}
  &  \leq   |S_1| \sum_{  \substack{ m_1 \cdots m_{|S_1|} \leq Q^2  \\  m_1  :   \mathrm{non-squarefree}} }  \frac{ \Lambda(m_1 )^2 \cdots \Lambda(m_{|S_1|})^2 }{ m_1 \cdots m_{|S_1|}} + \frac{ |S_1|(|S_1|-1)}{2}  \sum_{  \substack{ p_1 \cdots p_{|S_1|} \leq Q^2  \\  p_1 = p_2 } }  \frac{ ( \log p_1 )^2 \cdots (\log p_{|S_1|})^2 }{ p_1 \cdots p_{|S_1|}} \\ 
  & \ll ( \log Q)^{2 ( |S_1|-1)}.
  \end{align*} 
  We also see that 
  $$   \sum_{ \n \leq Q^2 }  \frac{ |b_\n (S_2)|^2}{\n } \ll ( \log Q)^{2|S_2|}. $$
  Hence, the contribution of the non-squarefree $\m$ is $O( Q / \log Q)$.
  
 Therefore,  $\m$  and $\n$ can be written as products of distinct primes as the following:
$$ \m = \prod_{ \ell \in S_1 } p_\ell , \qquad \n = \prod_{\ell \in S_2 } p_\ell . $$
  However, $\m, \n$ might have a common prime divisor. Let $(\m,\n) = \prod_{ \ell \in S_{11}} p_\ell = \prod_{\ell \in S_{21}} p_\ell $ for some $ S_{11} \subseteq S_1 $ and $ S_{21} \subseteq S_2 $. Then there is a unique bijection $\sigma : S_{11} \to S_{21}$ such that $  p_\ell = p_{\sigma(\ell)} $ for all $\ell \in S_{11}$. Moreover, since $ \widehat{F}_j$ is compactly supported, by the similar arguments to the proof of Lemma \ref{lem:truncatetoQsq},  we can remove the conditions $\m, \n \leq Q^2$  with error term of size $O\big(e^{-\tfrac {\varepsilon^2}{6}  (\log Q)^2}\big)$. Hence,
  \begin{equation}\label{eqn:C1GCG}
  	C_{1,\underline{G}} 
  	=   D(\mathcal W, Q)   \prod_{\ell =1 }^\nu  \widehat{F}_\ell (0) + \widetilde{C}_{ \underline{G}} + O\left(\frac{Q}{ \log Q }\right) , 
  	\end{equation}
  	where 
  	\est{\label{def:CtildeFg}
  	\widetilde{C}_{\underline{G}}&= \sum_{ \substack{  S_1 + S_2 + S_3=  [ \nu ] \\ S_1 , S_2 \neq \emptyset}  }   \bigg( \prod_{\ell \in S_3 } \widehat{F}_\ell (0) \bigg) \frac{  (-1)^{|S_1|+ |S_2|}}{(\log Q)^{|S_1|+ |S_2|}} \int_\R  \emts  \sum_{ \substack{S_{11}+S_{12} =  S_1 \\   S_{21}+ S_{22} = S_2  }} 
  	\sum_{ \substack{ \sigma : S_{11} \to S_{21}   \\   bijection} }   \\
  		& \times \sum_q \frac{ \W(q/Q)}{\varphi(q)}  \sumstar_{\chi \smod q}  \sum_{ \substack{P  } }\mu^2 (P)  \bigg( \prod_{\ell \in S_{11}}  \frac{   |\chi(p_\ell)|^2 ( \log p_\ell)^2 }{ p_\ell }  \widehat{F}_\ell  \bigg( - \frac{   \log p_\ell }{ \log Q} \bigg)  \widehat{F}_{ \sigma (\ell)}  \bigg(   \frac{   \log p_\ell }{ \log Q} \bigg) \bigg)      \\
  	&   \times          \sumtwo_{\substack{\m,\n  \\ (\m,\n)=1 \\(\m\n, P) = 1 } }  \frac{\mu^2 (\m) a_\m(S_{12}) \chi(\m) }{ \m^{1/2+it }}  \frac{\mu^2 (\n) b_\n(S_{22}) \bar{\chi}(\n) }{   \n^{1/2-it }} \> dt     ,
  }
and  $ P= \prod_{ \ell \in S_{11}}p_\ell.$ Note that the sum over $\m$ is 1 if $S_{12} = \emptyset$ and the sum over $\n$ is 1 if $S_{22} = \emptyset$ and the sum over $ \sigma $ is 1 if $ S_{11} = S_{21} = \emptyset$. When $S_{12} \neq \emptyset$ and $S_{22} = \emptyset$, 
  one can show that
$$ \int_{\mathbb R} e^{-t^2}\sum_{\substack{ \m  \\(\m, P) = 1 } }  \frac{\mu^2 (\m) a_m(S_{12}) \chi(\m) }{ \m^{1/2+it }}   \> dt  = O(1)  $$
by the same method as in (\ref{bound:oneSempty}) and its contribution to $\widetilde{C}_{\underline{G}}$ is $O( Q/ \log Q)$. The same holds for the case $S_{12} = \emptyset$ and $S_{22} \neq \emptyset$. Let $D_{\underline{G}}$ be the above sum with the additional conditions $S_{12}, S_{22} = \emptyset$ and $ N_{\underline{G}}$ be the above sum with the additional conditions $ S_{12}, S_{22} \neq \emptyset$. Then we see that
\begin{equation}\label{eqn:CGDGNG}
 \widetilde{C}_{ \underline{G}} = D_{\underline{G}} + N_{\underline{G}} + O(Q/\log Q). 
\end{equation}

The term $D_{\underline{G}}$ is so-called ``diagonal terms" and the term $N_{\underline{G}}$ is ``off-diagonal terms''.  $D_{\underline{G}}$ has a relatively simple representation as
  	\est{   	D_{\underline{G}}&= \sqrt{\pi} \sum_{ \substack{  S_1 + S_2 + S_3=  [ \nu ] \\ S_1 , S_2 \neq \emptyset}  }   \bigg( \prod_{\ell \in S_3 } \widehat{F}_\ell (0) \bigg) \frac{  (-1)^{|S_1|+ |S_2|}}{(\log Q)^{|S_1|+ |S_2|}} 
  	\sum_{ \substack{ \sigma : S_1 \to S_2   \\   bijection} }   \\
  		& \times \sum_q \frac{ \W(q/Q)}{\varphi(q)}  \sumstar_{\chi \smod q}  \sum_{ \substack{P  } }\mu^2 (P)  \bigg( \prod_{\ell \in S_1}  \frac{   |\chi(p_\ell)|^2 ( \log p_\ell)^2 }{ p_\ell }  \widehat{F}_\ell  \bigg( - \frac{   \log p_\ell }{ \log Q} \bigg)  \widehat{F}_{ \sigma (\ell)}  \bigg(   \frac{   \log p_\ell }{ \log Q} \bigg) \bigg)    ,
  }
where $ P = \prod_{ \ell \in S_1 } p_\ell$. Then we can obtain
\es{ \label{eqn:DG}  D_{\underline{G}} &= D(\mathcal W, Q) \sum_{ \substack{  S_1 + S_2 + S_3=  [ \nu ] \\ S_1 , S_2 \neq \emptyset \\ |S_1| = |S_2| } }   \bigg( \prod_{\ell \in S_3 } \widehat{F}_\ell (0) \bigg)  
	\sum_{ \substack{ \sigma : S_{1} \to S_{2}   \\   bijection} }  \bigg(\prod_{\ell \in S_1} \int_0^\infty v \widehat{F}_\ell (-v) \widehat{F}_{\sigma(\ell)} (v) dv \bigg)   + O\left(\frac{Q}{\log Q}\right) }
	by the following lemma. 
\begin{lemma}  \label{lem:diagonalCD}
Let $\chi$ be a primitive Dirichlet character mod $q \in [Q, 2Q]$ and $ P= \prod_{ \ell \in S_{1}}p_\ell$. Then 
\begin{align*}
	 & \sum_P \mu^2 (P)  \bigg( \prod_{\ell \in S_{1}}  \frac{   |\chi(p_\ell)|^2 ( \log p_\ell)^2 }{ p_\ell }  \widehat{F}_\ell  \bigg( - \frac{   \log p_\ell }{ \log Q} \bigg)  \widehat{F}_{ \sigma (\ell)}  \bigg(   \frac{   \log p_\ell }{ \log Q} \bigg) \bigg)\\
	 & =  ( \log Q)^{2 |S_{1}|} \prod_{\ell \in S_{1}}  \int_0^\infty v \widehat{F}_\ell (-v) \widehat{F}_{\sigma(\ell)} (v) dv  +  O(  ( \log Q)^{2 |S_{1}|-1} ).
	 \end{align*}
	 \end{lemma}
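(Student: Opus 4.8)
The sum in Lemma~\ref{lem:diagonalCD} ranges over tuples of \emph{distinct} primes $(p_\ell)_{\ell\in S_1}$ each \emph{coprime to $q$} (that is exactly what $\mu^2(P)$ and the factors $|\chi(p_\ell)|^2$ encode, since $|\chi(p)|^2=1$ if $p\nmid q$ and $=0$ otherwise). The plan is to first remove both of these constraints at the cost of an admissible error, so that the sum factors as an unrestricted product over $\ell\in S_1$ of one–dimensional prime sums, and then to evaluate each one–dimensional sum by partial summation.

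First I would drop the distinctness (squarefreeness of $P$) condition. Since each $\widehat F_\ell$ is bounded with support in $[-\kappa_\ell,\kappa_\ell]$, and since (by the next step) every single–variable sum $\sum_p \frac{(\log p)^2}{p}\widehat F_\ell\!\left(-\frac{\log p}{\log Q}\right)\widehat F_{\sigma(\ell)}\!\left(\frac{\log p}{\log Q}\right)$ is $O((\log Q)^2)$, the difference coming from tuples in which two indices $\ell_1,\ell_2$ are forced to share a prime $p$ replaces two such factors by the convergent sum $\sum_p (\log p)^4/p^2=O(1)$, hence is $O((\log Q)^{2|S_1|-4})$; several coincidences give even smaller terms. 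Next I would drop the condition $p_\ell\nmid q$. For a fixed index the extra contribution is controlled by $\sum_{p\mid q}\frac{(\log p)^2}{p}$ times $O((\log Q)^{2(|S_1|-1)})$, so I need $\sum_{p\mid q}\frac{(\log p)^2}{p}\ll\log q$. This follows by splitting the primes dividing $q$ at $\log q$: those $p\le\log q$ contribute $\le\sum_{p\le\log q}\frac{(\log p)^2}{p}\ll(\log\log q)^2$ by Mertens, while for the primes $p\mid q$ with $p>\log q$ one has $\frac1p<\frac1{\log q}$, so $\sum_{p\mid q,\,p>\log q}\frac{(\log p)^2}{p}\le\frac{1}{\log q}\big(\max_{p\mid q}\log p\big)\sum_{p\mid q}\log p\le\frac{1}{\log q}(\log q)(\log q)=\log q$. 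Hence $\sum_{p\mid q}\frac{(\log p)^2}{p}\ll\log q\ll\log Q$, and the total error from removing the coprimality condition is $O((\log Q)^{2|S_1|-1})$, which is acceptable. After these reductions the sum becomes $\prod_{\ell\in S_1}\Big(\sum_p \frac{(\log p)^2}{p}\widehat F_\ell\!\left(-\frac{\log p}{\log Q}\right)\widehat F_{\sigma(\ell)}\!\left(\frac{\log p}{\log Q}\right)\Big)$.

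For a single factor, write $g(v):=\widehat F_\ell(-v)\widehat F_{\sigma(\ell)}(v)$, which is smooth and supported in $|v|\le\min(\kappa_\ell,\kappa_{\sigma(\ell)})$, so the prime sum runs over $p\le Q^{O(1)}$. By Riemann--Stieltjes integration against $A(x):=\sum_{p\le x}\frac{\log p}{p}=\log x+O(1)$ (Mertens' theorem), $\sum_p \frac{(\log p)^2}{p}\,g\!\left(\tfrac{\log p}{\log Q}\right)=\int_{1^-}^{\infty}\log x\,g\!\left(\tfrac{\log x}{\log Q}\right)dA(x)$. The main part $\int_1^\infty \log x\,g\!\left(\tfrac{\log x}{\log Q}\right)\frac{dx}{x}$ equals, after $u=\log x$ and then $v=u/\log Q$, exactly $(\log Q)^2\int_0^\infty v\,g(v)\,dv=(\log Q)^2\int_0^\infty v\,\widehat F_\ell(-v)\widehat F_{\sigma(\ell)}(v)\,dv$. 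The remainder $\int_1^\infty \log x\,g\!\left(\tfrac{\log x}{\log Q}\right)d\big(A(x)-\log x\big)$, integrated by parts (boundary terms vanish because $g$ has compact support so $\log x\,g(\tfrac{\log x}{\log Q})$ vanishes at $x=1$ and for large $x$), is $\ll\int_1^{Q^{O(1)}}\big|\tfrac{d}{dx}\big(\log x\,g(\tfrac{\log x}{\log Q})\big)\big|\,dx\ll\int_1^{Q^{O(1)}}\frac{dx}{x}\ll\log Q$, since on the support the bracketed derivative is $O(1/x)$. Therefore each factor equals $(\log Q)^2\int_0^\infty v\,\widehat F_\ell(-v)\widehat F_{\sigma(\ell)}(v)\,dv+O(\log Q)$.

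Finally, multiplying out $\prod_{\ell\in S_1}\big((\log Q)^2 I_\ell+O(\log Q)\big)$ with $I_\ell:=\int_0^\infty v\,\widehat F_\ell(-v)\widehat F_{\sigma(\ell)}(v)\,dv$, the leading term is $(\log Q)^{2|S_1|}\prod_{\ell\in S_1}I_\ell$ and every other term loses at least one power $(\log Q)^2$ in exchange for $O(\log Q)$, hence is $O((\log Q)^{2|S_1|-1})$; combined with the two reduction errors above this yields the lemma. The whole argument is routine partial summation; the only point requiring a moment's care is the bound $\sum_{p\mid q}(\log p)^2/p\ll\log q$ used to discard the constraint from $|\chi(p_\ell)|^2$, since the crude estimate $\sum_{p\mid q}(\log p)^2/p\le\tfrac12(\log q)^2$ would only give an error of size $(\log Q)^{2|S_1|}$, i.e.\ the same order as the main term.
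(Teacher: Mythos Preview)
Your proof is correct and follows essentially the same route as the paper: remove the distinctness and coprimality constraints, factor the sum, and evaluate each single-prime sum by partial summation. Two small remarks. First, your careful splitting argument for $\sum_{p\mid q}(\log p)^2/p\ll\log q$ works, but the paper's version is shorter: since $(\log p)^2/p\le 4/e^2=O(1)$ for all primes $p$, one has $\sum_{p\mid q}(\log p)^2/p\ll\omega(q)\ll\log q$ directly. Second, the paper invokes the prime number theorem rather than Mertens in the single-factor step and obtains $O(1)$ instead of your $O(\log Q)$ per factor; either bound is admissible for the final error $O((\log Q)^{2|S_1|-1})$.
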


\begin{proof}
 By the inclusion-exclusion principle, the prime number theorem and the fact that 
	$ \sum_{p} (\log p)^r p^{-\alpha}$ is uniformly convergent  and bounded for $\alpha \geq 2$ and $r \leq 2|S_1|$, we have that 
	\begin{align*}
 &  \sum_{ \substack{P   \\ (P, q)=1} }\mu^2 (P)  \bigg( \prod_{\ell \in S_{1}}  \frac{   |\chi(p_\ell)|^2 ( \log p_\ell)^2 }{ p_\ell }  \widehat{F}_\ell  \bigg( - \frac{   \log p_\ell }{ \log Q} \bigg)  \widehat{F}_{ \sigma (\ell)}  \bigg(   \frac{   \log p_\ell }{ \log Q} \bigg) \bigg) \\
	& = \prod_{\ell \in S_{1}} \bigg(  \sum_{ (p, q)=1}  \frac{   ( \log p)^2 }{ p }  \widehat{F}_\ell  \bigg( - \frac{   \log p }{ \log Q} \bigg)  \widehat{F}_{ \sigma (\ell)}  \bigg(   \frac{   \log p }{ \log Q} \bigg)    \bigg) + O( (\log Q)^{ 2 |S_{1}| - 2}). 
	\end{align*}
	Since the number of primes diving $q$ is $O(\log q)$, the above is
	\begin{align*}
	& = \prod_{\ell \in S_{1}} \bigg(  \sum_p  \frac{   ( \log p)^2 }{ p }  \widehat{F}_\ell  \bigg( - \frac{   \log p }{ \log Q} \bigg)  \widehat{F}_{ \sigma (\ell)}  \bigg(   \frac{   \log p }{ \log Q} \bigg)  + O( \log Q )     \bigg) + O( (\log Q)^{ 2 |S_{1}| - 2}).
	\end{align*}
	By the prime number theorem and the partial summation, we obtain that 
	 $$ \sum_p  \frac{   ( \log p)^2 }{ p }  \widehat{F}_\ell  \bigg( - \frac{   \log p }{ \log Q} \bigg)  \widehat{F}_{ \sigma (\ell)}  \bigg(   \frac{   \log p }{ \log Q} \bigg)  = (\log Q)^2\int_0^\infty v \widehat{F}_\ell (-v) \widehat{F}_{\sigma(\ell)} (v) dv + O(1) . $$
Thus the lemma holds.	
	
\end{proof}
 
Therefore, by \eqref{eqn:C1GCG}, \eqref{eqn:CGDGNG} and \eqref{eqn:DG} we have
\es{ \label{eqn:C1GNG}	C_{1,\underline{G}}  =&    D(\mathcal W, Q) \sum_{    S_1 + S_2 + S_3=  [ \nu ]  }   \bigg( \prod_{\ell \in S_3 } \widehat{F}_\ell (0) \bigg)  
	\sum_{ \substack{ \sigma : S_{1} \to S_{2}   \\   bijection} } \bigg(\prod_{\ell \in S_1 }
 \int_0^\infty v \widehat{F}_\ell (-v) \widehat{F}_{\sigma(\ell)} (v) dv \bigg) \\
	& + N_{\underline{G}}     +  O\left(\frac{Q}{ \log Q }\right)} .

\section{Calculation of $N_{\underline{G}} $} \label{sec:ALS}
In this section we will calculate $N_{\underline{G}} $ defined in a line ahead of  \eqref{eqn:CGDGNG} using the asymptotic large sieve method. By the definition of $N_{\underline{G}} $ and switching summations, it can be written as
\es{ \label{eqn:CNg}
N_{\underline{G}}  &= \sum_{    S_1 + S_2 + S_3=  [ \nu ]    }   \bigg( \prod_{\ell \in S_3 } \widehat{F}_\ell (0) \bigg) \frac{  (-1)^{|S_1|+ |S_2|}}{(\log Q)^{|S_1|+ |S_2|}}  \sum_{ \substack{S_{11}+S_{12} =  S_1 \\   S_{21}+ S_{22} = S_2 \\ |S_{11}| = |S_{21}| \\ S_{12}, S_{22} \neq \emptyset }}
\sum_{ \substack{ \sigma : S_{11} \to S_{21}   \\   bijection} }   \\
& \times  \sum_{ \substack{P  } }\mu^2 (P)  \bigg( \prod_{\ell \in S_{11}}  \frac{   ( \log p_\ell)^2 }{ p_\ell }  \widehat{F}_\ell  \bigg( - \frac{   \log p_\ell }{ \log Q} \bigg)  \widehat{F}_{ \sigma (\ell)}  \bigg(   \frac{   \log p_\ell }{ \log Q} \bigg) \bigg) \Scal(P; S_{12}, S_{22}), 
}
where $ P = \prod_{ \ell \in S_{11}} p_\ell$ and $\Scal(P; S_{12}, S_{22})$ denotes
\es{ \label{Spartition}    \int_\R  \emts  \sum_{\substack{q \\ (q,P) = 1}} \frac{ \W(q/Q)}{\varphi(q)}  \sumstar_{\chi \smod q}        \sumtwo_{\substack{ \m,\n  \\ (\m,\n)=1 \\(\m\n, P) = 1 } }  \frac{\mu^2 (\m) a_\m(S_{12}) \chi(\m) }{ \m^{1/2+it }}  \frac{\mu^2 (\n) b_\n(S_{22}) \bar{\chi}(\n) }{   \n^{1/2-it }} \> dt     .
}
Note that $n$ is the positive integer introduced in Section \ref{sec:intro} and let $k$ and $r$ be positive integers with $k + r \leq n$. Due to the factor $\mu^2 (P)$, $P$ is supported in  squarefree positive integers and the number of prime divisors of $P$ is less than or equal to $n$. We start by estimating $\Scal(P; \{1,... , k\}, \{k+1,..., k+r\})$, which is a special case of $\Scal(P;S_{12}, S_{22} )$. It will be apparent that our treatment of $\Scal(P; \{1,... , k\}, \{k+1,..., k+r\})$ can be generalized to deal with $\Scal(P;S_{12}, S_{22} )$.

      \begin{proposition}\label{prop:als} 
Define
\begin{align*}
   \Scal  :=&  \ \Scal(P; \{1,... , k\}, \{k+1,..., k+r\})  \\
   = &  \int_{-\infty}^{\infty} e^{-t^2} \sum_{\substack{q \\ (q, P) = 1}} \FW \sumstar_{ \chi  \shortmod q }    \sumtwo_{ \substack{  \m, \n   \\ (\m,\n)=1 \\ (\m\n, P) = 1}    } \frac{a_\m b_\n \chi(\m) \bar{\chi}(\n)}{\sqrt{\m\n}} \pr{\frac \n\m}^{it} \> dt,
   \end{align*}
 where
\es{ \label{def:ambn} a_\m & =\mu^2 (\m)  \sum_{  p_1 \cdots p_k = \m  } \bigg(  \prod_{j=1}^k \log p_j   \widehat{F}_j \left( -\frac{\log p_j }{\log Q} \right) \bigg)   \\  
b_\n &= \mu^2(\n)  \sum_{  p_{k+1} \cdots p_{k+r} = \n  }    \bigg( \prod_{j=k+1}^{k+r}   \log p_j   \widehat{F}_j \left( \frac{\log p_j }{\log Q} \right)\bigg)    .} 
 Suppose that $ \widehat{F}_i  ( u)$ is supported in $|u| \leq \kappa_i$ for $ 1 \leq i \leq k + r$. Also for fixed $\varepsilon > 0,$ we assume that $ \kappa' + \kappa'' \leq 4 - \varepsilon $, where $ \kappa' = \sum_{i=1}^k \kappa_i $ and $ \kappa'' = \sum_{j=1}^r \kappa_{k+j} $. Then 
 \begin{align*}
& \Scal(P; \{1,... , k\}, \{k+1,..., k+r\}) \\
& \ \ \ \  =  Q (\log Q)^{k+r}    \sqrt{\pi}      \widetilde{\W}(1)  \prod_{p \, \nmid P} \bigg( 1 - \frac{1}{p^2}- \frac{1}{ p^3} \bigg)  \prod_{p | P} \pr{1 - \frac 1p} \mc {I}(k,r)  \\
 & \hskip 1in + O\left( Q(\log Q)^{k + r - 1}  \log \log Q  \right),
 \end{align*} 
  where 
  \est{   \mc {I}(k,r)   
 &:= \sum_{ \substack{ 1 \leq j_1 \leq k \\  k+1 \leq j_2 \leq k+r } }       \summany_{\substack{T_1,W_1, T_2, W_2, T_3, W_3  \subseteq [k+r] \\ T_1 + W_1 = \{1,. ..,j_1 -1\} \cup \{k + 1,...,  j_2 -1\} \\ T_2 + W_2 = \{ j_1 + 1,..., k\} \\ T_3 + W_3 = \{j_2  + 1, ..., k+r\} }} (-1)^{j_1  + r+ |W_2 |+| W_3|}    \\ &        \int_{ \substack{ \mc D_{k+r }(\vec{T}, \vec{W})  \\ u_{ j_1 } + u(\vec{T} )   > 1 }  } \left( \prod_{j=1}^{k+r}  \widehat F_j(-u_j) \right)       (1- u_{ j_1 } - u(\vec{T} )  ) \delta( u ([k+r]) )   \> d\vecu  ,}
  \est{ \mc D_{k+r }(\vec{T}, \vec{W}) &:= \mc D_{k+r}(T_1, T_2, T_3, W_1, W_2, W_3) \\
 	& := \bigg\{ \vecu \in \mathbb R^{k + r} : u_j < 0  \ \textrm{for} \ j \in T_1 \cup T_3 \cup W_3, \ \textrm{and} \  u_j > 0 \ \textrm{for} \ j \in T_2  \cup W_1  \cup W_2 \bigg\} ,}
$\delta(x)$ is the Dirac delta function, $ \vecu = ( u_1 , \dots, u_{k+r})$,  $ d\vecu = du_1 \cdots du_{k+r}$, $u(S) := \sum_{ j \in S } u_j $ for $S \subseteq [k+r]= \{ 1, \dots, k+r\}$ and $ u(\vec{T} ) := u( T_1 ) +  u(T_2) + u(T_3)  $.
\end{proposition} 
 In later application, $\kappa_i = \sum_{ j \in G_i } \eta_j $ when $ F_i (x) = \prod_{ j \in G_i } f_j ( x ) $, as in \eqref{def:Fell} and \eqref{kappa ell def}. 
We need new notations to extend Proposition \ref{prop:als} to general cases, so we will postpone it and complete the estimation of $N_{\underline{G}}$ in Section \ref{complete NG}.

\subsection*{Outline of the proof of Propotion \ref{prop:als}}
 Since there are various techical details in this section, we outline the key points of the proof here. 
 Roughly speaking, after orthogonality relation of Dirichlet characters, we shall study the sum of the form
$$ S \approx \sum_{q \asymp Q} \frac{1}{\varphi(q)} \sum_{\substack{d|q \\ d | \m - \n}} \varphi(d) \mu \left( \frac{q}{d}\right)\sumtwo_{  \m, \n \ll Q^{2-\epsilon}} \frac{a_\m b_\n}{\sqrt{\m\n}} ,$$
where $a_\m, b_\n$ are supported in squarefree numbers defined in \eqref{def:ambn}. Then we can divide the sum into two cases: small $d$ and big $d$. Thus $\mathcal S = \mathcal S_U + \mathcal S_L$.  

  For $\mathcal S_U$, the contribution from small $d$, we express the condition $d| \m-\n$ as the character sum $\sum_{\Psi \mod d } \Psi(\m) \overline{\Psi}(\n)$. The contribution from the principal character, say $\mathcal M_U$, is large, and the corresponding terms to non-pricipal characters are negligible. These will be proved in Lemma \ref{lem:SU}. Though $\mathcal M_U$ is large, it will cancel with one of the main terms from $\mathcal S_L$. 

 For $\mathcal S_L$, the key ingredient is the complementary divisor trick, which is $\m - \n = d g$ (ignoring gcd conditions). We then express the condition $d| \m-\n$ as the character sum $\sum_{\Psi \mod g } \Psi(\m) \overline{\Psi}(\n)$ instead. Note that since $d$ is large, and $\m, \n \ll Q^{2 - \epsilon}$, $g$ is small. In the critical range, which is $\m, \n \asymp Q^{2 - \epsilon}$ and $d \asymp Q$, we obtain that $g \asymp Q^{1 - \epsilon}$. The smaller conductor $g$ allows us to bound error terms. Note that without integration over $t$, the size of $\m, \n$ could go up to $Q^{4- \epsilon},$ and the size of $g$ would be too large to obtain negligible error terms.  

 Again the main term of $\mathcal S_{L}$ , say $\mathcal S_{L,0}$, is from the principal character (see Section \ref{proof of prop part 2}). Then we apply the Mellin inversion to write $S_{L, 0}$ in terms of contour integration of over $s$ and $z$ as in Lemma \ref{lemma 5.4}. The main terms come from the residue at $s = 0$  (from the factor $\zeta (1-s)$) and $s = 1$ (from the factors $\Gamma(1-s)$ and $B(-s)$). The contribution from the residue at $s = 0$ is cancelled out with $\mathcal M_U$. Then technical manipulation, e.g. inclusion-exclusion and Fourier transform, in Lemma \ref{lemma 5.6} - \ref{lemma 5.8}  is done to express the contribution of the residue at $s = 1$ in the form that will be easily compared with conjectures from random matrix in Section \ref{sec:RMT}.

\subsubsection*{ Proof of Proposition \ref{prop:als}:} We start from applying the orthogonality relation of Dirichlet characters and obtain that
\begin{equation*}\begin{split}
  \sum_{\substack{q \\ (q, P) = 1}} \FW \sumstar_{ \chi \shortmod q } \chi(\m) \overline{\chi} (\n)    
&=  \sum_{\substack{ q \\   (q, \m\n P ) = 1}} \FW       \sum_{\substack{ d| q \\ d| \m - \n }} \varphi(d) \mu\left( \frac{q}{d} \right)  \\
&  = \sumtwo_{ \substack{ c,d  \\ d| \m - \n \\ (\m\n P,cd ) = 1 } } \mu(c)  \frac{ \varphi(d) }{ \varphi(cd)}   \W \pr{\frac{cd}{Q}}   .
\end{split}\end{equation*}
Since $ \m$ is supported in products of $k$ distinct primes, $ \n $ is supported in products of $r $ distinct primes and  $(\m,\n)=1$, 
$$ \m \neq \n  .$$
   We have that 
\es{\label{eqn:splittoUL}
   \Scal &= \int_{-\infty}^{\infty} e^{-t^2}      \sumtwo_{\substack{\m, \n  \\ (\m,\n)=1 \\ (\m\n, P) = 1}} \frac{a_\m b_\n  }{\sqrt{\m\n}} \pr{\frac \n\m}^{it} \sumtwo_{ \substack{ c,d  \\ d| \m - \n \\ (\m\n P, cd) = 1 } } \mu(c)   \frac{ \varphi(d) }{ \varphi(cd)}    \W \pr{\frac{cd}{Q}}   \> dt\\
&=: \Scal_U  + \Scal_L ,
}
say, where $\Scal_U $ is the sum over $c > C$  (i.e. $d$ is small), and $\Scal_L $ is the sum over $c \leq C$  (i.e. $d$ is big)  with
$C = Q^{ \varepsilon_1 }$ for some $\varepsilon_1 > 0$ to be determined later. The remaining part of the proof will be given in Section \ref{proof of prop part 1} -- Section \ref{proof of prop part 4}

\subsection{Evaluating $\Scal_{U} $}\label{proof of prop part 1} 

In this section we will prove the following lemma. 

\begin{lemma} \label{lem:SU} Let all notations be as above.  Then for any $\epsilon  > 0$
	\est{\Scal_U = \mathcal {M}_{U}    +O \left( C Q^{(\kappa'+\kappa'')/2 -1+\epsilon }  + \frac{Q^{1 + \epsilon }}{C}\right), }
where
\es{\label{def:MSU}\mathcal {M}_U := -  \widetilde{\W}(0) &  B(0)    \int_{-\infty}^{\infty} e^{-t^2}      \sum_{\substack{\m, \n \\ (\m,\n)=1 \\ (\m\n, P) = 1}} \frac{a_\m b_\n  }{\sqrt{\m\n}} \pr{\frac \n\m}^{it}  \sum_{ \substack{  c\leq C   \\ ( c  ,\m\n P  )=1    } } \frac{ \mu(c) B_2(0 ,c) }{\varphi(c)}  B_1(0 ,\m\n P) \> dt}
and $B_2(0, c)$ and $B_1(0, \m\n P)$ are defined as in Lemma \ref{lemma:B sum}.
	
\end{lemma}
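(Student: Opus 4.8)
The plan is to split $\Scal_U$ according to the divisor condition $d \mid \m - \n$ and to isolate the ``large $d$'' contribution as the main term $\mathcal M_U$, while everything else is absorbed into the error. Concretely, I would first recall from \eqref{eqn:splittoUL} that $\Scal_U$ is the sum over $c > C$; the idea of Conrey--Iwaniec--Soundararajan is that when $c$ is large the modulus $cd \asymp Q$ forces $d$ to be small, and conversely, so one expects the $c$-sum to behave like a smooth complete sum. I would substitute the Mellin representation $\W(cd/Q) = \frac{1}{2\pi i}\int_{(\sigma)} \hW(s)(Q/(cd))^s\,ds$ (or work directly with the rapid decay of $\W$), and then carry out the sum over $d \mid \m-\n$ with $(d,\m\n P)=1$ using Lemma \ref{lemma:B sum}. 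Writing $\sum_{d} \frac{\varphi(d)}{\varphi(cd)d^s}\cdots$ via that lemma produces the factor $\zeta(1+s)B(s)B_1(s,\m\n P)B_2(s,c)/\varphi(c)$, and the pole of $\zeta(1+s)$ at $s=0$ yields the residue contribution, which after evaluating $\hW$ at $s=0$ is exactly $-\hW(0)B(0)B_1(0,\m\n P)$ times the arithmetic sum in \eqref{def:MSU}. The minus sign comes from the residue of $\zeta(1+s)$ combined with the orientation, and the constraint $(c,\m\n P)=1$, $c\le C$ survives because we only added back the small-$c$ tail.

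Next I would bound the two error terms. The term $O(Q^{1+\epsilon}/C)$ comes from truncating or completing the $c$-sum: since $\Scal_U$ runs over $c>C$ and the total mass (after pulling out the main term) is $\ll Q^{1+\epsilon}$ spread over $c$, the tail beyond $C$ contributes $\ll Q^{1+\epsilon}/C$. More precisely, after extracting the residue at $s=0$ one shifts the contour to $\Rep(s) = -\delta$ for small $\delta>0$; the shifted integral is $\ll Q^{-\delta}\cdot(\text{stuff})$, and combined with the range $c > C$ this gives a saving of a power of $Q/C$. The term $O(CQ^{(\kappa'+\kappa'')/2 - 1 + \epsilon})$ is the ``diagonal-type'' leftover: the lengths of $\m$ and $\n$ are at most $Q^{\kappa'}$ and $Q^{\kappa''}$ respectively, so $\sum_\m |a_\m|/\sqrt\m \ll Q^{\kappa'/2+\epsilon}$ and similarly for $\n$; weighting by $1/\varphi(cd) \ll 1/(cd) \asymp c/(cQ) = 1/Q$ (using $cd\asymp Q$) and summing over $c \le C$ in the corrective terms gives the stated bound. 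One also needs the trivial bound $|a_\m|, |b_\n| \ll_\epsilon \m^\epsilon, \n^\epsilon$ (up to logarithmic factors from $\Lambda$), which holds since $a_\m, b_\n$ are divisor-type sums over prime factorizations with bounded number of factors.

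The main obstacle, I expect, is controlling the interchange of summation and the $t$-integral together with the contour shift uniformly in $\m, \n$: the arithmetic factor $B_1(s,\m\n P)$ depends on the primes dividing $\m\n P$, and while $B(s), B_1, B_2$ are all bounded and holomorphic for $\Rep(s) > -1/2$ (say), one must verify that the error integral converges and that the $\m,\n$-sums of the shifted integrand are genuinely of size $Q^{(\kappa'+\kappa'')/2+\epsilon}/Q$ rather than larger. A secondary subtlety is the coprimality bookkeeping: the conditions $(\m,\n)=1$, $(\m\n P, cd)=1$, $(q,P)=1$ must be tracked carefully so that when we ``add back'' the range $c\le C$ to complete the sum we do not accidentally change the arithmetic factor; here the key point, as in \eqref{def:MSU}, is that the completed main term retains $c \le C$ precisely because that is the complementary range, and the error of this completion is again $\ll Q^{1+\epsilon}/C$. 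Finally I would note that the $e^{-t^2}$ factor plays no essential role in this lemma beyond making the $t$-integral absolutely convergent — it simply comes along for the ride — so the estimates are uniform in $t$ and integrate to a harmless $\sqrt\pi$-type constant absorbed into the implied constants and into $\hW$.
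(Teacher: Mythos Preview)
Your proposal has a genuine gap at the crucial step: you propose to ``carry out the sum over $d \mid \m-\n$ using Lemma~\ref{lemma:B sum},'' but that lemma evaluates $\sum_{(d,m)=1} \frac{1}{\varphi(cd)d^s}$ with \emph{no} divisibility constraint on $d$. The condition $d \mid \m - \n$ is precisely what must be removed, and you have not said how. The paper handles this by detecting the congruence $\m \equiv \n \pmod d$ with Dirichlet characters: one writes $\mathbf 1_{d \mid \m -\n} = \frac{1}{\varphi(d)}\sum_{\Psi \bmod d} \Psi(\m)\overline\Psi(\n)$ (valid since $(\m\n,d)=1$), splitting $\Scal_U(\m,\n)$ into a principal-character piece $\Scal_{U,0}$ and a non-principal piece $\Scal_{U,E}$.

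This decomposition is also the actual source of the two error terms, and your attribution of them is off. The $O(Q^{1+\epsilon}/C)$ term is \emph{not} a truncation or contour-shift error: it is the bound for $\Scal_{U,E}$, obtained by applying Lemma~\ref{lemma:primecharsum1} (which uses GRH) to the $\m,\n$-sum twisted by each non-principal $\Psi$, and then estimating $\sum_{c>C}\sum_d \frac{\varphi(d)}{\varphi(cd)}\W(cd/Q) \ll Q^{1+\epsilon}/C$ trivially. For the principal piece $\Scal_{U,0}$, the passage from $c > C$ to $c \le C$ is \emph{exact}, not approximate: since $\sum_{c \mid q}\mu(c)=0$ for $q>1$ and $\W$ is supported on $[1,2]$, one has $\Scal_{U,0}(\m,\n) = -\sum_{c\le C}\sum_d \frac{\mu(c)}{\varphi(cd)}\W(cd/Q)$ with the $d$-sum now unconstrained. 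Only at this point does Lemma~\ref{lemma:B sum} apply; the contour shift to $\Rep(s)=-1+\epsilon$ then produces the residue $\mathcal M_U$ plus $O(CQ^{-1+\epsilon})$ per pair $(\m,\n)$, and the trivial sum over $\m,\n$ supplies the factor $Q^{(\kappa'+\kappa'')/2}$.
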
 

\begin{proof}
 Let
$$ \Scal_U(\m,\n) := \sumtwo_{ \substack{ c > C ,\ d  \\ d| \m - \n \\ (\m\n P, cd) = 1 } } \mu(c)   \frac{ \varphi(d) }{ \varphi(cd)}    \W \pr{\frac{cd}{Q}},$$
then
\begin{equation*}\begin{split}
  \Scal_U &   =   \int_{-\infty}^{\infty} e^{-t^2}      \sum_{\substack{\m, \n \\ (\m,\n)=1  \\ (\m\n, P)=1 }} \frac{a_\m b_\n  }{\sqrt{\m\n}} \pr{\frac \n\m}^{it} \Scal_U (\m,\n)   \> dt .
\end{split}\end{equation*}
 Replacing the condition $ d|\m-\n$ by the orthogonality relation of a character sum, we have 
\begin{equation}\label{eqn:U}\begin{split}
\Scal_U (\m,\n) &=   \sumtwo_{ \substack{  c>C ,\ d   \\ ( cd ,\m\n P  )=1    } }   \frac{ \mu(c) }{ \varphi(cd)}    \W \pr{\frac{cd}{Q}} \sum_{\Psi \shortmod d} \Psi (\m) \overline{\Psi }(\n)  \\  
 &= \sumtwo_{ \substack{  c>C ,\ d   \\ ( cd ,\m\n P  )=1    } }  \frac{ \mu(c) }{ \varphi(cd)}     \W \pr{\frac{cd}{Q}}  + \sumtwo_{ \substack{  c>C ,\ d   \\ ( c ,\m\n P  )=1 \\ (d, P) = 1   } } \frac{ \mu(c) }{ \varphi(cd)}     \W \pr{\frac{cd}{Q}}\sum_{\Psi \neq \Psi_0 \shortmod d} \Psi (\m) \overline{\Psi }(\n)  \\
&=: \Scal_{U,0} (\m,\n)   + \Scal_{U,E}(\m,\n).
 \end{split}\end{equation}
We first evaluate $\Scal_{U, 0}(\m,\n) $.  Since  $\sum_{ c |q } \mu(c) = 0$ for $q > 1$, we see that 
$$ \sum_{ \substack{c|q \\ c >C}} \mu(c) = -  \sum_{ \substack{c|q \\ c \leq C}} \mu (c) $$
for $ q > 1 $. Hence, by  writing $\W$ in terms of its Mellin transform $\widetilde{\W}$, we have
\begin{equation*}\begin{split} 
 \Scal_{U,0}(\m,\n)  = &   -  \sumtwo_{ \substack{  c\leq C ,\ d   \\ ( cd ,\m\n P  )=1    } } \frac{ \mu(c) }{ \varphi(cd)}   \W \pr{\frac{cd}{Q}} \\
  =&  -  \sumtwo_{ \substack{  c\leq C ,\ d   \\ ( cd ,\m\n P  )=1    } } \frac{ \mu(c) }{ \varphi(cd)}   \frac{1}{2 \pi i } \int_{(2)} \widetilde{\W}(s)\left( \frac{ Q}{cd} \right)^s ds .
\end{split}\end{equation*}
Applying Lemma \ref{lemma:B sum} to the sum over $d$, we have
\est{
\Scal_{U, 0} (\m,\n) & = -   \frac{1}{2 \pi i } \int_{(2)} \widetilde{\W}(s) Q^s    \sum_{ \substack{  c\leq C   \\ ( c, \m\n P  )=1    } } \frac{ \mu(c) B_2(s ,c) }{\varphi(c) c^s} \zeta(1+s) B(s) B_1(s, \m\n P)    ds .
}
We move the $s$-contour to $(-1+\epsilon)$ and encounter a simple pole at $s = 0.$ Then for any small $\epsilon  > 0,$ 
\est{
 \Scal_{U, 0}(\m, \n)  
& =  -  \widetilde{\W}(0)   B(0)   \sum_{ \substack{  c\leq C   \\ ( c  ,\m\n  P)=1    } } \frac{ \mu(c) B_2(0 ,c) }{\varphi(c)}  B_1 (0 , \m\n P)   + O( C Q^{-1+\epsilon } )  
}
 by Lemma \ref{lemma:B sum}. Note that the $O$-term above depends on $ 2^{\omega(\m\n  P)} \leq 2^n $. 
Hence by the support of $\widehat F_\ell$ in Proposition \ref{prop:als}, we have
 \begin{equation*}
\begin{split}
  \Scal_{U,0} : = &    \int_{-\infty}^{\infty} e^{-t^2}      \sumtwo_{\substack{\m, \n \\ (\m,\n)=1 \\ (\m\n, P) = 1 }} \frac{a_\m b_\n  }{\sqrt{\m\n}} \pr{\frac \n\m}^{it} \Scal_{U,0}(\m,\n)    \> dt =    \mathcal {M}_U   + O( C Q^{(\kappa'+\kappa'')/2 -1+\epsilon } ).
\end{split}\end{equation*}
We next consider $\Scal_{U,E}(\m,\n)$. Define
\begin{equation*}\begin{split}
  \Scal_{U,E} &  : =  \int_{-\infty}^{\infty} e^{-t^2}   \sum_{\substack{\m,\n  \\ (\m,\n)=1 \\ (\m\n, P) = 1 }}     \frac{a_\m b_\n}{\sqrt{\m\n}} \pr{\frac \n\m}^{it}\Scal_{U,E}(\m,\n) \> dt \\
  &   =  \int_{-\infty}^{\infty} e^{-t^2}    \sumtwo_{  \substack{c>C ,\ d \\ (cd, P) = 1}   }  \frac{ \mu(c) }{ \varphi(cd)}   \W \pr{\frac{cd}{Q}} \sum_{\Psi \neq \Psi_0 \shortmod d} \sum_{\substack{\m,\n  \\ (\m,\n)=1 \\ (\m\n, cP)=1  }}     \frac{a_\m \Psi (\m) b_\n\overline{\Psi }(\n) }{\sqrt{\m\n}} \pr{\frac \n\m}^{it}     \> dt .
\end{split}\end{equation*}
By Lemma \ref{lemma:primecharsum1}, we obtain that
 \begin{equation*}\begin{split}
  \Scal_{U,E} &          \ll  (\log Q)^{2(k + r)}   \sumtwo_{  c>C ,d   }  \frac{ \varphi(d)  }{ \varphi(cd)}   \W \pr{\frac{cd}{Q}}  \ll  \frac{Q^{1+ \epsilon }}{C}
\end{split}\end{equation*}
for any $\epsilon >0$.   We derive the lemma from the fact that $ \Scal_U = \Scal_{U, 0} + \Scal_{U, E}\ .$

\end{proof}

\subsection{Evaluating $\Scal_{L} $}\label{proof of prop part 2} In this section, we will treat the terms with $c \leq C.$ We write
\begin{equation*}\begin{split}
  \Scal_L &   =   \int_{-\infty}^{\infty} e^{-t^2}      \sumtwo_{\substack{\m, \n \\ (\m,\n)=1 \\ (\m\n, P) = 1 }} \frac{a_\m b_\n  }{\sqrt{\m \n}} \pr{\frac \n\m}^{it}\Scal_L (\m,\n)   \> dt , 
\end{split}\end{equation*}
where
$$ \Scal_L (\m,\n):=  \sumtwo_{ \substack{ c \leq C ,d  \\ d| \m - \n \\ (\m\n P, cd) = 1 } } \mu(c)   \frac{ \varphi(d) }{ \varphi(cd)}    \W \pr{\frac{cd}{Q}}.$$
The conditions $(\m,\n)=1$ and $ d|\m-\n$ imply $ (\m\n,d)=1$, so that we can remove the condition $(\m\n,d)=1$ in the sum. By the identity
$$\frac{\varphi(d)}{\varphi(cd)} = \frac{1}{\varphi(c)} \sum_{ \substack{a|c \\ a | d } } \frac{\mu(a)}{a} ,$$
 we obtain that  
\begin{equation*}\begin{split} 
  \Scal_L (\m,\n) 
 =& \sumtwo_{ \substack{ c \leq C , \ d  \\ d| \m - \n \\ (\m\n P, c) = (P, d) = 1 } } \sum_{ \substack{a|c \\ a | d } } \frac{\mu(a)\mu(c)}{a\varphi(c)}    \W \pr{\frac{cd}{Q}} \\
 =& \sum_{ \substack{ c \leq C    \\ (\m\n P, c) = 1 } } \sum_{ a|c   } \frac{\mu(a)\mu(c)}{a\varphi(c)}  \sum_{ \substack{ d  \\ ad| \m - \n \\ (d, P) = 1}}  \W \pr{\frac{acd}{Q}}  \\
 =& \sum_{ \substack{ c \leq C    \\ (\m\n P, c) = 1 } } \sum_{ a|c   } \frac{\mu(a)\mu(c)}{a\varphi(c)} \sum_{\ell | P} \mu (\ell) \sum_{ \substack{ d  \\ ad\ell| \m - \n }}  \W \pr{\frac{acd\ell}{Q}}
\end{split}\end{equation*} 
 We substitute the sum over $d$ by the sum over $g$ through the condition $ad\ell g=|\m-\n|$ and  then write the condition $ a\ell g|\m-\n$ in term of Dirichlet characters. Hence
\begin{equation*}\begin{split} 
 \Scal_L (\m,\n) 
 &= \sum_{ \substack{ c \leq C    \\ (\m\n P, c) = 1 } } \sum_{ a|c   } \frac{\mu(a)\mu(c)}{a\varphi(c)}  \sum_{\ell | P} \mu (\ell) \sum_{ \substack{ g  \\ a\ell g| \m - \n}}  \W \pr{\frac{c|\m-\n|}{gQ}} \\
 &= \sum_{ \substack{ c \leq C    \\ (\m \n P, c) = 1 } } \sum_{ a|c   } \frac{\mu(a)\mu(c)}{a\varphi(c)} \sum_{\ell | P} \mu (\ell) \sum_{ g}  \W \pr{\frac{c|\m-\n|}{gQ}} \frac{1}{\varphi(a\ell g)} \sum_{  \Psi \shortmod {a\ell g}} \Psi(\m) \overline{\Psi}(\n)  \\
 &=:  \Scal_{L,0} (\m,\n) + \Scal_{L,E} (\m,\n) ,  
\end{split}\end{equation*}
where
$$ \Scal_{L,0} (\m,\n) = \sum_{ \substack{ c \leq C    \\ (\m\n P, c) = 1 } } \sumthree_{\substack{ a|c ,\ g , \ \ell |P\\ (a\ell g,\m\n)=1 }  } \frac{\mu(a)\mu(c)\mu(\ell) }{a\varphi(c) }    \W \pr{\frac{c|\m-\n|}{gQ}} \frac{1}{\varphi(a\ell g)}  $$
and 
$$ \Scal_{L,E} (\m,\n) = \sum_{ \substack{ c \leq C    \\ (\m\n P, c) = 1 } } \sum_{ a|c   } \frac{\mu(a)\mu(c)}{a\varphi(c)} \sum_{\ell | P} \mu(\ell ) \sum_{ g}  \W \pr{\frac{c|\m-\n|}{gQ}} \frac{1}{\varphi(a\ell g)} \sum_{  \substack{ \Psi \shortmod {a\ell g}  \\    \Psi \neq \Psi_0  }} \Psi(m) \overline{\Psi}(n) .$$
We remark that $(c,\m\n P) = 1$ and $a | c$ imply that $(a, \m\n P) = 1$. Define
\es{ \label{def:SL0} \Scal_{L,0} =  \int_{-\infty}^{\infty} e^{-t^2}      \sumtwo_{\substack{\m, \n \\ (\m,\n)=1 \\(\m\n, P)  = 1 }} \frac{a_\m b_\n  }{\sqrt{\m\n}} \pr{\frac \n\m}^{it} \Scal_{L,0} (\m,\n) \  dt }
and 
\es{\label{def:SLE} \Scal_{L,E} =  \int_{-\infty}^{\infty} e^{-t^2}      \sumtwo_{\substack{\m, \n \\ (\m,\n)=1 \\ (\m\n, P) = 1 }} \frac{a_\m b_\n  }{\sqrt{\m\n}} \pr{\frac \n\m}^{it} \Scal_{L,E} (\m,\n) \  dt , }
 so that
 $$ \Scal_L= \Scal_{L,0} + \Scal_{L,E} . $$

We first estimate $\Scal_{L,E}$.  
\begin{lemma} \label{lem:SLE} Let $\Scal_{L, E}$ be defined in (\ref{def:SLE}). Then for any $\epsilon > 0,$
$$  \Scal_{L, E} \ll C Q^{-1 + (\kappa' + \kappa'')/2 + \epsilon}, $$
where $\kappa' $ and $\kappa''$ are defined as in Proposition \ref{prop:als}.
\end{lemma}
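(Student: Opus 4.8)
The plan is to recast $\Scal_{L,E}$ into a form where the weight coming from $\W$ no longer binds $\m$ to $\n$, and then to treat the principal and the non-principal characters separately. The substitution $ad\ell g=|\m-\n|$ that was used to introduce $\Scal_{L,E}(\m,\n)$ is reversible; undoing it --- equivalently, replacing the condition $ad\ell\mid\m-\n$ appearing in $\Scal_L(\m,\n)$ directly by Dirichlet characters, with $d$ kept as the free variable --- one obtains
\est{
\Scal_{L,E}(\m,\n)&=\sum_{\substack{c\le C\\ (\m\n P,c)=1}}\frac{\mu(c)}{\varphi(c)}\sum_{a\mid c}\frac{\mu(a)}{a}\sum_{\ell\mid P}\mu(\ell)\\
&\quad\times\sum_{d\ge1}\W\!\Big(\tfrac{acd\ell}{Q}\Big)\frac{1}{\varphi(ad\ell)}\sum_{\substack{\Psi\smod{ad\ell}\\ \Psi\ne\Psi_0}}\Psi(\m)\overline\Psi(\n),
}
in which the argument of $\W$ is independent of $\m,\n$. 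I would then insert this into $\Scal_{L,E}$, interchange all summations, and group by the modulus $N:=ad\ell$; since $acd\ell=cN$ the weight becomes $\W(cN/Q)$, and since $\W$ is supported in $[1,2]$ and $c\le C$ the modulus is forced into $[\,Q/(2C),\,2Q\,]$. This produces
\est{
\Scal_{L,E}&=\sum_{N}\frac{\beta_N}{\varphi(N)}\sum_{\substack{\Psi\smod N\\ \Psi\ne\Psi_0}}\int_{-\infty}^{\infty}e^{-t^2}\Bigg(\sum_{\substack{\m,\n\\ (\m,\n)=1,\ (\m\n,N)=1}}\frac{a_\m b_\n}{\m^{1/2+it}\n^{1/2-it}}\,\Psi(\m)\overline\Psi(\n)\Bigg)\,dt\\
&\quad+(\text{lower-order terms}),
}
where the lower-order terms account for the removed condition $(\m\n,cP)=1$ and $\beta_N$ collects the $c,a,\ell$--sums. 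Because of the factor $\mu(c)$, $\beta_N$ is not merely $O(Q^\epsilon)$: after a Mellin--transform estimate of $\sum_{c\le C}\mu(c)\varphi(c)^{-1}\W(cN/Q)$, which is controlled by $\sum_{c>X}\mu(c)\varphi(c)^{-1}\ll X^{-1/2+\epsilon}$ under GRH (cf. the contour shift in the proof of Lemma \ref{lemma:D asymp}), it carries a genuine saving.

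First I would dispose of the principal character together with all large moduli. Since $\sum_{\Psi\smod N}\Psi(\m)\overline\Psi(\n)=\varphi(N)\,\mathbf 1_{N\mid\m-\n,\ (\m\n,N)=1}$ and $(\m,\n)=1$ forces $\m\ne\n$ (recall $|S_{12}|,|S_{22}|\ge1$ in $N_{\underline G}$), while $\m\le Q^{\kappa'}$ and $\n\le Q^{\kappa''}$ by the support of $\widehat F_i$, the entire $\Psi$--sum vanishes once $N>\max(Q^{\kappa'},Q^{\kappa''})$. For such $N$ only the $\Psi=\Psi_0$ term survives, with a minus sign; the resulting double sum over $\m,\n$ is estimated by Lemma \ref{lemma:prime sum}, and after the $t$--integration the rapid decay of $e^{-t^2}(\n/\m)^{it}$ together with the saving in $\sum_N\beta_N/\varphi(N)$ keeps this piece well below $CQ^{-1+(\kappa'+\kappa'')/2+\epsilon}$.

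The remaining, genuinely off--diagonal, range $Q/(2C)\le N\le\max(Q^{\kappa'},Q^{\kappa''})$ is the heart of the matter, and here I would invoke the large sieve. I would first remove the conditions $(\m,\n)=1$ and $(\m\n,N)=1$ by Möbius inversion, reducing the inner sum to an $O_{k,r}(1)$--fold combination of products of a Dirichlet polynomial in $\m'\le Q^{\kappa'}$ with one in $\n'\le Q^{\kappa''}$, with coefficients independent of $N$ and of $\ell^2$--norm $\ll(\log Q)^{O(1)}$; then reduce each $\Psi\smod N$ to its primitive inducing character; then apply Cauchy--Schwarz in the characters followed by the large sieve inequality (Lemma \ref{lemma:lsi}, in its version for primitive characters of modulus $\sim D$, valid for every $D$). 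This bounds the two polynomials by $(D+Q^{\kappa'}/D)^{1/2}(\log Q)^{O(1)}$ and $(D+Q^{\kappa''}/D)^{1/2}(\log Q)^{O(1)}$; since the relevant conductors satisfy $D\gg Q/C$, the off--diagonal terms $Q^{\kappa'}/D$ and $Q^{\kappa''}/D$ dominate, and a dyadic sum over $D$ --- combined with the bound for $\beta_N$ and $\int_{\mathbb R}e^{-t^2}\,dt=O(1)$ --- yields exactly the factor $C\,Q^{(\kappa'+\kappa'')/2-1+\epsilon}$. \textbf{The main obstacle is precisely this last estimate:} one has to prevent the ``diagonal'' part $D$ of the large sieve (and the moduli of size near $Q$) from overwhelming the bound, and this forces the simultaneous use of the length restrictions $\m\le Q^{\kappa'}$, $\n\le Q^{\kappa''}$, the restriction of $N$ to $[\,Q/C,2Q\,]$ coming from $\W$ and $c\le C$, and the arithmetic cancellation carried by the factors $\mu(c)$ in $\beta_N$; it is their interplay, not any one of them alone, that produces the power saving.
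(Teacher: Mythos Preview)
Your proposal has two genuine gaps.

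\textbf{The recasting of $\Scal_{L,E}$ is not correct.} The substitution $ad\ell g=|\m-\n|$ is a change of summation variable \emph{before} the character decomposition; undoing it does not undo the split into principal and non-principal pieces. In the paper, $\Scal_{L,E}(\m,\n)$ is the sum over $g$ of $\W(c|\m-\n|/(gQ))\varphi(a\ell g)^{-1}\sum_{\Psi\ne\Psi_0\smod{a\ell g}}\Psi(\m)\overline\Psi(\n)$; your formula replaces this by a sum over $d$ with characters modulo $ad\ell$. Under the bijection $d\leftrightarrow g$ the moduli $ad\ell$ and $a\ell g$ are \emph{different}, so the two non-principal pieces do not coincide. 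A concrete check: with $a=\ell=c=1$, $\m=2$, $\n=3$, the paper's $\Scal_{L,E}$ vanishes (since $|\m-\n|=1$ forces the argument of $\W$ to be $\le 1/Q$), while your expression equals $-\sum_{(d,6)=1}\W(d/Q)/\varphi(d)\ne 0$. You are therefore bounding a different quantity.

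\textbf{Even for your quantity, the large-sieve argument does not close.} For moduli $N\sim Q$ (which occur, since $\W(cN/Q)\ne 0$ forces $N\in[Q/c,2Q/c]$ and $c$ can be $1$), Cauchy--Schwarz followed by the large sieve gives a factor $((D+Q^{\kappa'}/D)(D+Q^{\kappa''}/D))^{1/2}$ with $D\sim Q$. Since $\kappa'+\kappa''<4$, at least one of $\kappa',\kappa''$ is $<2$, and on that side the diagonal term $D$ dominates; the product is then at least of size $Q$, not the required $CQ^{(\kappa'+\kappa'')/2-1+\epsilon}=o(Q)$. The hoped-for cancellation from $\mu(c)$ in $\beta_N$ cannot help here: when $N\sim Q$ the support of $\W$ pins $c$ to $O(1)$ values, so there is no sum in which $\mu(c)$ oscillates. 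The ``interplay'' you invoke is not enough to overcome this.

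\textbf{What the paper actually does.} The paper keeps the $|\m-\n|$ inside the weight $\W(c|\m-\n|/(gQ))$ and exploits it as follows: the $t$-integral $\int e^{-t^2}(\n/\m)^{it}dt=\sqrt\pi\,e^{-(\log\n/\m)^2/4}$ forces $\m,\n\le Q^{(\kappa'+\kappa'')/2+\epsilon_1}$ up to negligible error. This makes $|\m-\n|\le 2Q^{(\kappa'+\kappa'')/2+\epsilon_1}$, and then the support of $\W$ forces $g\le\tilde g:=2cQ^{(\kappa'+\kappa'')/2-1+\epsilon_1}$, so the modulus $a\ell g$ is \emph{short}. After separating $\m$ and $\n$ via Mellin inversion and the beta-integral identity~\eqref{eqn:mns}, the inner $(\m,\n)$-sum for each non-principal $\Psi$ is bounded pointwise by $(\log Q)^{O(1)}$ using GRH (Lemma~\ref{lemma:primecharsum1}), not the large sieve. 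Summing $g\le\tilde g$ and $c\le C$ then yields $CQ^{(\kappa'+\kappa'')/2-1+\epsilon}$ directly. The essential idea you are missing is that the $|\m-\n|$-dependence of the weight, combined with the $t$-average, is precisely what makes the modulus small; removing that dependence (as your recasting does) destroys the mechanism.
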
 
\begin{proof} We write out $S_{L, E}$ as
\begin{equation*}\begin{split}
 \Scal_{L,E} = &  \int_{-\infty}^{\infty} e^{-t^2}      \sumtwo_{\substack{\m, \n \\ (\m,\n)=1  \\ (\m\n, P) = 1}} \frac{a_\m b_\n  }{\sqrt{\m\n}} \pr{\frac \n\m}^{it} \sum_{ \substack{ c \leq C    \\ (\m\n P, c) = 1 } } \sum_{ a|c   } \frac{\mu(a)\mu(c)}{a\varphi(c)} \sum_{\ell | P} \mu(\ell ) \sum_{ g}  \W \pr{\frac{c|\m-\n|}{gQ}}  \\
 &\hskip 1 in \times \frac{1}{\varphi(a\ell g)}\sum_{  \substack{ \Psi \shortmod {a\ell g}  \\    \Psi \neq \Psi_0  }} \Psi(\m) \overline{\Psi}(\n)  \  dt .
 \end{split}\end{equation*}
If $ \m$ or $ \n$ is greater than $ Q^{ (\kappa'+\kappa'')/2 + \epsilon_1}$ for $ \epsilon_1 > 0$, then
$$ \min(\m,\n) \leq Q^{\min(\kappa' , \kappa'') } < Q^{ (\kappa'+\kappa'')/2 + \epsilon_1 } \leq \max (\m,\n),$$
and
$$   \frac{ \max(\m,\n) }{ \min(\m,\n) } \geq Q^{ (\kappa'+\kappa'')/2+\epsilon_1 - \min( \kappa' , \kappa'') } \geq Q^{\epsilon_1 } .$$
It then follows that
$$ \int_{-\infty}^\infty e^{-t^2 } \bigg( \frac{\n}{\m} \bigg)^{it} dt \ll e^{- ( \log \n/\m)^2 /4} \ll e^{ - ( \epsilon_1^2  /4)   ( \log Q)^2 } . $$
Hence, we can restrict the range of $\m, \n$ up to $ Q^{ (\kappa'+\kappa'')/2 + \epsilon_1} $ with an error of size $O(Q^{-A})$ for any positive integers $A.$ 
For $ \m,\n$ in this range,  we have
$$ |\m-\n| \leq 2Q^{ (\kappa'+\kappa'')/2 + \epsilon_1} .$$
Since $\W $ is supported in the interval $[1,2]$, if 
$$ \W  \pr{\frac{c|\m-\n|}{gQ}} \neq 0 ,$$
then
$$g \leq \frac{c|\m-\n|}{Q} \leq   2cQ^{ -1+ (\kappa'+\kappa'')/2 + \epsilon_1}.   $$
Therefore we add the condition $ g \leq \tilde{g}:= 2cQ^{ -1+ (\kappa'+\kappa'')/2 + \epsilon_1 }$ and then remove the restriction $ \m,\n \leq Q^{ (\kappa'+\kappa'')/2 + \epsilon_1} $ from the sum over $\m, \n$ with an additional error $O(Q^{-A})$.
Thus,
\begin{equation*}\begin{split}
 \Scal_{L,E} = &  \int_{-\infty}^{\infty} e^{-t^2}      \sumtwo_{\substack{\m, \n \\ (\m,\n)=1  \\ (\m\n, P) = 1}} \frac{a_\m b_\n  }{\sqrt{\m\n}} \pr{\frac \n\m}^{it} \sum_{ \substack{ c \leq C    \\ (\m\n P, c) = 1 } } \sum_{ a|c   } \frac{\mu(a)\mu(c)}{a\varphi(c)} \sum_{\ell | P} \mu(\ell ) \sum_{ g\leq \tilde{g}}  \W \pr{\frac{c|\m-\n|}{gQ}}  \\
 &\hskip 1 in \times \frac{1}{\varphi(a\ell g)}\sum_{  \substack{ \Psi \shortmod {a\ell g}  \\    \Psi \neq \Psi_0  }} \Psi(\m) \overline{\Psi}(\n)  \  dt   + O(Q^{-100}).
 \end{split}\end{equation*}
 By Mellin inversion of $\W$ and changing the order of sums and integrals we have for $\delta_1 > 0$
\begin{equation*}\begin{split}
 \Scal_{L,E}  
& =  \frac{1}{ 2\pi i } \int_{(-\delta_1)} \widetilde{\W}(s)Q^s  \int_{-\infty}^{\infty} e^{-t^2}       \sum_{  \substack{c \leq C \\ (c, P) = 1}     } \sum_{ a|c   } \frac{\mu(a)\mu(c)}{ac^s \varphi(c)} \sum_{\ell | P} \mu(\ell )  \sum_{ g\leq \tilde{g}}   \frac{1}{\varphi(a\ell g) g^{-s}}  \\
 & \hskip 1.5in \times \sum_{  \substack{ \Psi \shortmod {a\ell g}  \\   \Psi \neq \Psi_0 } } \sumtwo_{\substack{\m, \n \\ (\m,\n)=1 \\ (\m\n, cP) = 1 }} \frac{a_\m b_\n  }{\sqrt{\m\n}} \pr{\frac \n\m}^{it}  \Psi(\m) \overline{\Psi}(\n) |\m-\n|^{-s} \  dt \> ds + O(Q^{-100}),
 \end{split}\end{equation*}
  where $ \widetilde{\W} $ is the Mellin transform of $\W$. To separate $\m$ and $\n$, we apply the following identity
  \begin{equation}\label{eqn:mns}
   |\m-\n|^{-s}  = \frac{1}{2 \pi i } \int_{(\delta_2)} \frac{ \Gamma (1-s) \Gamma (z)}{ \Gamma (1-s+z)} ( \m^{z-s}\n^{-z} +\n^{z-s} \m^{-z} ) \> dz 
   \end{equation}
where  $ \delta_2 > 0$,  $ \Rep (s) < 0 $ and $ \m \neq \n $.
The integral is absolutely convergent due to the product of gamma factors decaying like $|z|^{-1 + \Rep (s) }$.  To prove \eqref{eqn:mns}, it is enough to show that
$$  (1-x)^{-s} = \frac{1}{2 \pi i } \int_{(\delta_2)} \frac{ \Gamma (1-s) \Gamma (z)}{ \Gamma (1-s+z)} ( x^{-z}  + x^{z-s}  ) \> dz   $$
for $ 0 < x < 1 $, $\delta_2 >0$ and $ \Rep(s)<0$ and it is readily seen by the following two identities. We have 
$$  \frac{1}{2 \pi i } \int_{(\delta_2)} \frac{ \Gamma (1-s) \Gamma (z)}{ \Gamma (1-s+z)}  x^{-z}    \> dz  = \sum_{ k = 0}^\infty   \frac{ (-1)^k}{k!} \frac{ \Gamma(1-s)}{\Gamma(1-s-k)} x^k    = (1-x)^{-s}   $$
by shifting the contour to the left and applying the binomial theorem, and 
$$  \frac{1}{2 \pi i } \int_{(\delta_2)} \frac{ \Gamma (1-s) \Gamma (z)}{ \Gamma (1-s+z)}   x^{z-s}    \> dz  =0  $$
by shifting the contour to the right. By \eqref{eqn:mns},  we write
\begin{equation*}\begin{split}
 \Scal_{L,E}  
 = & \frac{1}{ 2\pi i } \int_{(-\delta_1)} \widetilde{\W}(s)Q^s  \int_{-\infty}^{\infty} e^{-t^2}       \sum_{  \substack{c \leq C \\ (c, P) = 1}     } \sum_{ a|c   } \frac{\mu(a)\mu(c)}{ac^s \varphi(c)} \sum_{\ell | P} \mu(\ell )  \sum_{ g\leq \tilde{g}}   \frac{1}{\varphi(a\ell g) g^{-s}}  \\
 & \times \frac{1}{2 \pi i } \int_{(\delta_2)} \frac{ \Gamma (1-s) \Gamma (z)}{ \Gamma (1-s+z)} \sum_{  \substack{ \Psi \shortmod {a\ell g}  \\   \Psi \neq \Psi_0 } } ( \Scal_{L,E,1}+\Scal_{L,E,2}  )  \> dz \>   dt \>  ds + O(Q^{-100}), 
 \end{split}\end{equation*}
where
\begin{align*}
\Scal_{L,E,1} =&  \sum_{\substack{\m, \n \\ (\m,\n)=1 \\ (\m\n, cP)=1 }} \frac{a_\m b_\n  }{\sqrt{\m\n}} \pr{\frac \n\m}^{it}  \Psi(\m) \overline{\Psi}(\n)   \m^{z-s}\n^{-z}  ; \\
\Scal_{L,E,2} =&   \sum_{\substack{\m, \n \\ (\m,\n)=1 \\ (\m\n, cP)=1 }} \frac{a_\m b_\n  }{\sqrt{\m\n}} \pr{\frac \n\m}^{it}  \Psi(\m) \overline{\Psi}(\n) \n^{z-s} \m^{-z} . 
  \end{align*}  
We choose $  \delta_i = \frac{1}{\log Q}$ for $ i=1,2$. Applying  Lemma \ref{lemma:primecharsum1} to $\mathcal S_{L, E, 1}$ and $\mathcal S_{L, E, 2}$ and using the fact that $ cP \leq Q^4$ (due to support of $\widehat F$), we obtain that
$$ \Scal_{L,E }  \ll Q^{\epsilon} \sum_{c \leq C} \frac{1}{c} \sum_{a|c} \frac{1}{a} \sum_{\ell | P}  \sum_{g \leq \tilde g}  1 \ll C Q^{-1 + (\kappa' + \kappa'')/2 + \epsilon}  $$
for any $ \epsilon>0$, concluding the proof of the lemma.

\end{proof}

\subsection{Evaluating $\mc {M}_{U}  + \Scal_{L,0}  $}\label{proof of prop part 3}
Next, we compute $\Scal_{L,0}$  defined in \eqref{def:SL0}. Indeed, we will show that one of the main terms from $\Scal_{L, 0}$ will cancel out with the main term of $\Scal_U$, which is $\mathcal {M}_U$ defined in (\ref{def:MSU}). Let $\mc{I}(k,r)$ be defined as in Proposition \ref{prop:als}. In this section, we will show the following:  

\es{\label{lem:SUplusSL} \mc {M}_{U}  + \Scal_{L,0} = Q (\log Q)^{k+r}    \sqrt{\pi}      \widetilde{\W}(1)  &\prod_{p \, \nmid P} \bigg( 1 - \frac{1}{p^2}- \frac{1}{ p^3} \bigg)  \prod_{p|P} \bigg( 1 - \frac1p \bigg)  \mc {I}(k,r)\\
&   + O( Q(\log Q)^{k+r-1}\log \log Q  + Q^{1+\epsilon}/C ).}

By applying Mellin inversion, Lemma \ref{lemma:B sum} and \eqref{eqn:mns}, we obtain the following lemma.
\begin{lemma}\label{lemma 5.4}
We have that
\est{\Scal_{L, 0} &= \frac{1}{ (2\pi i)^2 } \int_{(-\delta_1)} \widetilde{\W}(s)Q^s  \zeta(1-s) B(-s)  B_4(-s, P) \int_{-\infty}^{\infty} e^{-t^2}     \sum_{ \substack{ c \leq C    \\ (c, P ) = 1 } } \frac{\mu(c)B_3(-s, c)}{c^s \varphi(c)}  \\
&\hskip .5in \times   \int_{(\delta_2)} \frac{ \Gamma (1-s) \Gamma (z)}{ \Gamma (1-s+z)} \bigg( \mathcal H_0 (s, s-z+it, z-it )  +  \mathcal H_0 ( s, z+it, s-z-it )  \bigg)        \> dt  \> dz \> ds }
and 
\begin{equation*} 
  \mc {M}_U  
   =       -  \widetilde{\W}(0)   B(0)  B_1(0, P)   \int_{-\infty}^{\infty} e^{-t^2}      \sum_{ \substack{ c\leq C \\ (c, P) = 1}    } \frac{ \mu(c) B_2(0 ,c) }{\varphi(c)}   \mc H_0 (0, it, -it )  \> dt,
  \end{equation*}
where each  $\delta_i $ is a small positive number for $ i = 1,2 $,
\es{\label{def:B3} B_3 (s ,  c) := \sum_{a|c} \frac{ \mu(a) B_2(  s , a) }{ a \varphi(a)} = \prod_{p | c} \left( 1 - \frac{1}{p(p-1)} \left(1 + \frac{1}{(p-1)p^{s + 1}} \right)^{-1}\right), }
\es{\label{def:B4}  B_4 (s ,  P ) := \sum_{\substack{\ell | P }} \frac{ \mu(\ell) B_2(  s , \ell) }{  \varphi(\ell)} = \prod_{p | P} \left(1 - \frac{1}{p - 1} \left(1 + \frac{1}{(p-1)p^{s + 1}} \right)^{-1} \right),}
 the functions $B$ and $B_2$ are  defined in Lemma \ref{lemma:B sum}  and 
$$  \mathcal H_0 (s,\alpha,\beta ) := \sumtwo_{\substack{\m, \n \\ (\m,\n)=1 \\
  ( \m\n, cP)=1 }} \frac{a_\m B_1( -s , \m) b_\n B_1(-s , \n)  }{ \m^{1/2+\alpha } \n^{1/2+\beta}} . $$ 
  \end{lemma}

\begin{proof}
 
First we write $\Scal_{L, 0}$ in (\ref{def:SL0}) in terms of the Mellin transform  of $\W$.  For small $\delta_1 > 0,$
\est{
 \Scal_{L,0}  
 = & \frac{1}{ 2\pi i } \int_{(-\delta_1)} \widetilde{\W}(s)Q^s  \int_{-\infty}^{\infty} e^{-t^2}      \sumtwo_{\substack{\m, \n \\ (\m,\n)=1 \\ (\m\n, P) = 1 }} \frac{a_\m b_\n  }{\sqrt{\m\n}} \pr{\frac \n\m}^{it} |\m-\n|^{-s}  \sum_{ \substack{ c \leq C    \\ (c, \m\n P ) = 1 } } \frac{\mu(c)}{c^s \varphi(c)}  \\
 & \hskip 2in \times \sum_{\substack{ a|c }  } \frac{\mu(a)}{a  } \sum_{\substack{ \ell | P \\ (\ell, \m\n) = 1} } \mu(\ell) \sum_{\substack{g \\ (g,\m\n) = 1}}  \frac{1}{\varphi(a\ell g)g^{-s}}    \>  dt  \> ds .  }
  By Lemma \ref{lemma:B sum}, the sum over $g$ is
$$ \sum_{ \substack{g \\ (g, \m\n)=1}} \frac{1}{ \varphi(a\ell g) g^{-s} } = \frac{1}{\varphi(a\ell)} \zeta(1-s) B(-s) B_1 ( -s , \m\n) B_2(-s , a\ell)  ,$$
where the functions $B$, $B_1 $ and $B_2$ are defined in the lemma. Since $a | c$, $(c, P) = 1$, $\ell | P$ and $B_2 (-s, \cdot)$ is multiplicative, it follows that $(a, \ell ) = 1$, $\varphi(a\ell) = \varphi(a)\varphi(\ell)$ and 
$$ B_2 (-s, a \ell) =   B_2 (-s, a  )B_2 (-s,   \ell) .$$ 
Moreover, we can drop the condition $(\ell,\m\n) = 1$ since $\ell | P$ and $(P, \m\n) = 1$.  Then we see that
\est{
 \Scal_{L,0}  
 = & \frac{1}{ 2\pi i } \int_{(-\delta_1)} \widetilde{\W}(s)Q^s  \int_{-\infty}^{\infty} e^{-t^2}      \sumtwo_{\substack{\m, \n \\ (\m,\n)=1 \\ (\m\n, P) = 1 }} \frac{a_\m b_\n  B_1 ( -s , \m\n)  }{\sqrt{\m\n}} \pr{\frac \n\m}^{it} |\m-\n|^{-s}  \sum_{ \substack{ c \leq C    \\ (c, \m\n P ) = 1 } } \frac{\mu(c)}{c^s \varphi(c)}  \\
 &   \times \sum_{\substack{ a|c }  } \frac{\mu(a) B_2(-s , a) }{a \varphi(a)  } \sum_{\substack{ \ell | P } } \frac{\mu(\ell)B_2 (-s,\ell)  }{\varphi(\ell)} \zeta(1-s) B(-s)  \>  dt  \> ds \\ 
= & \frac{1}{ 2\pi i } \int_{(-\delta_1)} \widetilde{\W}(s)Q^s  \int_{-\infty}^{\infty} e^{-t^2}      \sumtwo_{\substack{\m, \n \\ (\m,\n)=1 \\ (\m\n, P) = 1 }} \frac{a_\m b_\n  B_1 ( -s , \m\n)  }{\sqrt{\m\n}} \pr{\frac \n\m}^{it} |\m-\n|^{-s}  \sum_{ \substack{ c \leq C    \\ (c, \m\n P ) = 1 } } \frac{\mu(c)}{c^s \varphi(c)}  \\
 &   \times B_3 (-s, c) B_4 (-s, P)  \zeta(1-s) B(-s)  \>  dt  \> ds   .  } 
By changing the order of summations and applying  \eqref{eqn:mns} and the fact that $(\m, \n) = 1$, we can find the first identity in the lemma.

The second identity in the lemma follows easily by switching the order of summations in (\ref{def:MSU}) and using the coprime conditions.

 \end{proof}

 
 By estimating $\mathcal{H}_0 $ using Lemma \ref{lemma:cs}, we find the next lemma.
 
 \begin{lemma}\label{lemma 5.5}
 We have
 \begin{align*}
     \Scal_{L,0} & = \sum_{ \underline{K} \in \Pi_{k+r} } \mu_{k+r} ( \underline{O}, \underline{K})\ \mc S_{L,0}(\underline{K}) , \\
  \mc {M}_{U} &  = \sum_{ \underline{K} \in \Pi_{k+r} } \mu_{k+r} ( \underline{O}, \underline{K})\ \mc {M}_U(\underline{K}),
 \end{align*}
where for each $ \underline{K} = \{ K_1 , \dots, K_\tau \} \in \Pi_{k+r}$  
\es{\label{eqn:SL0step1}
 \Scal_{L,0}(\underline{K})   
 &:=   \frac{1}{ (2\pi i)^2 } \int_{(-\delta_1)} \widetilde{\W}(s)Q^s  \zeta(1-s) B(-s) B_4(-s, P) \int_{-\infty}^{\infty} e^{-t^2}        \sum_{  \substack{c \leq C   \\ (c, P) = 1}    } \frac{ \mu(c)B_3( -s ,  c ) }{ c^s   \varphi(c) }       \\ 
 &  \times  \int_{(\delta_2)} \frac{ \Gamma (1-s) \Gamma (z)}{ \Gamma (1-s+z)} \bigg( C_{\underline{K}} (cP; s,\alpha_1, \beta_1 )  +  C_{\underline{K}}  (cP;  s, \alpha_2, \beta_2 )  \bigg) \> dz   \> dt   \> ds   
}
with $ \alpha_1 =s-z+it$, $\beta_1 =  z-it$, $\alpha_2 =  z+it$ and $\beta_2 = s-z-it $, and
  \es{ \label{eqn:MSUstep1}
  \mc {M}_{U} (\underline{K})    & :=      -  \widetilde{\W}(0)   B(0)    \int_{-\infty}^{\infty} e^{-t^2}      \sum_{ \substack{ c\leq C  \\ (c, P) = 1}    } \frac{ \mu(c) B_2(0 ,c) }{\varphi(c)}  C_{\underline{K}}  (cP ; 0, it, -it )  \> dt,
  } 
   $$   C_{\underline{K}} (cP ; s,\alpha, \beta) = \sum_{p_1, \dots, p_\tau } \mc J_{cP ; s,\alpha,\beta} ( \iota_{\underline{K}} ( p_1 , \dots, p_\tau )) , $$
 and
  \begin{align*}
\mathcal J_{cP; s,\alpha,\beta} ( p_1 , \dots, p_{k+r}) = &
\prod_{j=1}^k    \frac{ \log p_j B_1 (-s, p_j)}{p_j^{1/2+\alpha}} \widehat{F}_j \bigg( -  \frac{\log p_j}{\log Q} \bigg)  \prod_{j=k+1}^{k+r}    \frac{ \log p_j B_1 (-s, p_j)}{p_j^{1/2+\beta }} \widehat{F}_j \bigg(    \frac{ \log p_j}{\log Q} \bigg)
\end{align*}
 if $ (p_1 \cdots p_{k+r}, cP ) =1 $, and equals to 0 otherwise. 
 \end{lemma}

 \begin{proof} 
    Define
\begin{align*}
 R_{\underline{K}} (cP ; s,\alpha, \beta) &= \sumsh_{p_1, \dots, p_\tau} \mathcal J_{c P ; s,\alpha,\beta} ( \iota_{\underline{K}} ( p_1, \dots, p_\tau )) 
  \end{align*}
for $ \underline{K} = \{ K_1 , \dots, K_\tau \} \in \Pi_{k+r}$, where $\sumsh$ is the sum over distinct primes. Then 
  $$ C_{\underline{H}} (cP ; s,\alpha, \beta) = \sum_{\underline{H}  \preceq \underline{K} } R_{\underline{K}} (cP ; s,\alpha, \beta)$$
  for any $\underline{H} \in \Pi_{k+r}$. By Lemma \ref{lemma:cs}, we have
\begin{equation}\label{eqn:h0sab}
 \mc H_0 (s,\alpha, \beta) = R_{\underline{O}} (cP ; s,\alpha, \beta)     = \sum_{ \underline{K} \in \Pi_{k+r} } \mu_{k+r} ( \underline{O}, \underline{K}) C_{\underline{K}} (cP ; s,\alpha, \beta). 
\end{equation}
By Lemma \ref{lemma 5.4} and \eqref{eqn:h0sab}, the lemma holds. 
\end{proof}


We now compute $C_{\underline{K}}$, which will yield the estimation of $\Scal_{L,0} $ and $ \mathcal{M}_U$, and obtain the following lemma.

\begin{lemma}\label{lemma 5.6}
Let notations be as defined above. Then we have
\es{\label{eqn:MUL0M1M2}  \mc {M }_U + \Scal_{L, 0} =  \sum_{ \substack{ \vecd \in \{ 0, 1, 2\}^{k+r} \\ d_j = 0 \mathrm{~for~some~} j } }      \mc M_{ \vecd }     + O(Q (\log Q)^{k+r-1}\log \log Q ), }
 where
 \es{ \label{def:Md}  \mc {M }_{ \vecd}  \  : = & \int_{-\infty}^{\infty} e^{-t^2}  \frac{1}{(2 \pi i)^2 } \int_{(1/2)}     \int_{(1-1/\mc U)} \widetilde{\W}(s)Q^s  \zeta(1-s) \zeta(2-s) B_5 (s)  B_4(-s, P)     \Gamma_0 (s,z)   \\
&  \times  \sum_{  \substack{c \leq C \\ (c, P) = 1}       } \frac{ \mu(c)B_3( -s ,  c ) }{ c^s   \varphi(c) }            \mc K_{ \vecd } (  s, s - z + it, z - it)       \> ds \> dz  \> dt  , }
\es{\label{def:Gamma0sz} \Gamma_0 (s,z) :=   \frac{ \Gamma (1-s) \Gamma (z)}{ \Gamma (1-s+z)} +   \frac{ \Gamma (1-s) \Gamma (s- z)}{ \Gamma (1-z)} }
       and
$$ \mc {K}_{\vecd} ( s_1, s_2, s_3  )  :=    \prod_{ j \leq k + r} \mc K_{ \{j\} , d_j } (  s_1, s_2, s_3 )   $$ 
for $ \vecd = ( d_1 , \dots, d_{k+r}) \in \{ 0, 1, 2 \}^{k+r} $ and each $ \mc K_{ \{j\} , d_j } $   is defined in \eqref{KK def 1} and \eqref{KK def 2}.   
\end{lemma}

\begin{proof}

 Define
 \begin{equation}\label{def KKj}
 \mathcal K_{K_j} ( cP ; s,\alpha, \beta) := \sum_{\substack{ p  \\ (p,cP)=1}} \left(  \prod_{\substack{\ell \leq k \\ \ell \in K_j }} \frac{ \log p \, B_1 ( -s , p)}{ p^{1/2+\alpha}} \widehat{F}_\ell \bigg( - \frac{ \log p }{\log Q} \bigg) \prod_{\substack{k< \ell \leq k+r \\ \ell \in K_j} } \frac{ \log p \, B_1 ( -s , p)}{ p^{1/2+\beta}} \widehat{F}_\ell \bigg( \frac{ \log p }{ \log Q} \bigg) \right)
 \end{equation}
 for each $j \leq \tau $, then we have 
 $$ C_{\underline{K}} ( cP ; s,\alpha,\beta) = \prod_{j \leq \tau} \mc K_{K_j} ( cP ; s,\alpha, \beta) . $$
 To estimate \eqref{eqn:SL0step1} and \eqref{eqn:MSUstep1}, $(\alpha, \beta)$ represents  $ ( it, -it), (s-z+it, z-it)$ or $(z+it, s-z-it)$.
 
 Let $\delta_1 = \delta_2 = 1/ \log Q$ in \eqref{eqn:SL0step1}, then $\Rep (s ) = - 1/ \log Q$, $ \Rep (z) = 1/ \log Q$ and $ |\alpha|, |\beta|  \leq 2 / \log Q $.  If $ |K_j | >1 $, then 
  $$ \mc K_{K_j } ( cP ; s, \alpha, \beta)  \ll   \sum_{ p \ll Q^4 }   \frac{ (\log p)^2 }{ p^{ 1- 4/ \log Q}} \ll  ( \log Q)^2 .$$
When $|K_j | =1$, there are two cases. We find that  
 \est{ \mc K_{K_j} ( cP ; s,\alpha, \beta) &= \sum_{\substack{ p  \\ (p,cP)=1}}  \frac{ \log p \, }{ p^{1/2+\alpha}}\left(1 - \frac{1}{p^{-s + 1}}\right)\left( 1 + \frac{1}{(p-1)p^{-s + 1}} \right)^{-1} \widehat{F}_\ell \bigg( - \frac{ \log p }{\log Q} \bigg)}
 for $ K_j = \{\ell \} $ with $ \ell \leq k $ and 
  \est{ \mc K_{K_j} ( cP ; s,\alpha, \beta) &= \sum_{\substack{ p  \\ (p,cP)=1}}  \frac{ \log p \, }{ p^{1/2+\beta}}\left(1 - \frac{1}{p^{-s + 1}}\right)\left( 1 + \frac{1}{(p-1)p^{-s + 1}} \right)^{-1} \widehat{F}_\ell \bigg(   \frac{ \log p }{\log Q} \bigg)  }
  for $ K_j = \{\ell \} $ with $ k< \ell \leq k+r $. Since the sum
   \est{   \sum_{\substack{ p  \\ (p,cP)=1}}  \frac{ \log p \, }{ p^{1/2+\alpha}}\left(1 - \frac{1}{p^{-s + 1}}\right)  \widehat{F}_\ell \bigg( \pm \frac{ \log p }{\log Q} \bigg)}
 is the main part of $ \mc K_{K_j} ( cP ; s,\alpha, \beta)$, motivated from Lemma \ref{lemma:prime sum} we define
  \es{ \label{KK def 1}
 \mc K_{K_j ,0 } ( cP; s, \alpha, \beta) & :=  \mc K_{K_j ,0 } (   s, \alpha, \beta) := F_\ell (- i \mathcal{U} (1/2-\alpha)) \log Q  ,\\
 \mc K_{K_j ,1 } ( cP; s, \alpha, \beta) & :=  \mc K_{K_j ,1 } (   s, \alpha, \beta) := - \log Q \int_{-\infty}^0 Q^{ v (1/2-\alpha)} \hat{F}_\ell (-v) dv ,   \\
 \mc K_{K_j ,2 } ( cP; s, \alpha, \beta) & :=  \mc K_{K_j ,2 } (   s, \alpha, \beta) := - \log Q  \int_0^\infty Q^{v (-1/2+s-\alpha)} \hat{F}_\ell (-v) dv  , \\
 \mc K_{K_j ,3 } ( cP; s, \alpha, \beta) & :=  \mc K_{K_j} ( cP ; s,\alpha, \beta) -   \sum_{ 0 \leq i \leq 2} \mc K_{K_j ,i } ( cP; s, \alpha, \beta) }
  for $ K_j = \{\ell \} $ with $ \ell \leq k $ and 
  \es{ \label{KK def 2}
 \mc K_{K_j ,0 } ( cP; s, \alpha, \beta) & :=  \mc K_{K_j ,0 } (   s, \alpha, \beta) := F_\ell ( i \mathcal{U} (1/2-\beta)) \log Q  ,\\
 \mc K_{K_j ,1 } ( cP; s, \alpha, \beta) & := \mc K_{K_j ,1 } (   s, \alpha, \beta) :=  - \log Q \int_{-\infty}^0 Q^{ v (1/2-\beta)} \hat{F}_\ell  (v) dv ,   \\
 \mc K_{K_j ,2 } ( cP; s, \alpha, \beta) & :=  \mc K_{K_j ,2 } (   s, \alpha, \beta) := - \log Q  \int_0^\infty Q^{v (-1/2+s-\beta)} \hat{F}_\ell (v) dv  , \\
 \mc K_{K_j ,3 } ( cP; s, \alpha, \beta) & :=  \mc K_{K_j} ( cP ; s,\alpha, \beta) -   \sum_{ 0 \leq i \leq 2} \mc K_{K_j ,i } ( cP; s, \alpha, \beta) }
   for $ K_j = \{\ell \} $ with $k<  \ell \leq k+r  $, then we see that
  \est{ \mc K_{K_j} ( cP ; s,\alpha, \beta)  = \sum_{ 0 \leq i \leq 3} \mc K_{K_j ,i } ( cP; s, \alpha, \beta)  }
 for $K_j = \{ \ell \} $. Since $\hat{F}_\ell$ is supported in $[-\kappa_\ell, \kappa_\ell]$, we have
 \begin{align*}
  \mc K_{K_j ,1 } ( cP; s, \alpha, \beta) &\ll  \log Q \int_{-\kappa_\ell }^0 Q^{ v /2 } | \hat{F}_\ell (-v) |  dv \ll 1 , \\
 \mc K_{K_j ,2 } ( cP; s, \alpha, \beta) & \ll   \log Q  \int_0^{\kappa_\ell}  Q^{ - v/2 } |\hat{F}_\ell (-v) |  dv  \ll 1  .
  \end{align*}
  To estimate $\mc K_{K_j ,3 }$, by Lemma \ref{lemma:prime sum} we first see that
  \begin{align*}
 \mc K_{K_j } ( cP; s, \alpha, \beta) &  = \sum_{p }  \frac{ \log p   }{ p^{1/2+\alpha}} \widehat{F}_\ell \bigg( - \frac{ \log p }{\log Q} \bigg)  - \sum_{p }  \frac{ \log p   }{ p^{3/2+\alpha -s }} \widehat{F}_\ell \bigg( - \frac{ \log p }{\log Q} \bigg) + O \bigg( \sum_{ p | c } 1 \bigg) \\
 & =  F_\ell (- i \mathcal{U} (1/2-\alpha)) \log Q + R_{-} ( 1/2+\alpha, F_\ell)    + O( \log Q ) \\
 & = \mc K_{K_j  , 0 } ( cP; s, \alpha, \beta) + O( ( \log Q)^2 ) 
\end{align*} 
for $K_j = \{ \ell \} $ with $ \ell \leq k $. Note  that the bound does not depend on $P$ because $\sum_{p | P} 1 \leq n$, where $n$ is defined in Section \ref{sec:intro}. Thus, by Lemma \ref{lemma:prime sum} we find that 
$$  \mc K_{K_j , 3  } ( cP; s, \alpha, \beta) = O( ( \log Q)^2 ). $$

If  $ \underline{K} = \{ K_1 , \dots, K_\tau \}  \in   \Pi':= \Pi'_{k+r}= \{ \underline{K}   \in \Pi_{k+r} :   \ |K_j| = 1 \ \   \textrm{ for some } \ \ j     \} $,
 then we see that
 \begin{align*}
 C_{\underline{K}} (cP; s, \alpha, \beta)  & =    \prod_{j \in I_{\underline{K}, 1 } }\bigg(  \sum_{d_j = 0,1,2,3}  \mc K_{K_j , d_j } (cP; s, \alpha, \beta) \bigg) \prod_{j  \in I_{\underline{K}, 2 } } \mc K_{K_j}(cP; s, \alpha, \beta) \\
  & = \bigg(  \sum_{   d_j = 0,1,2,3 \ \textrm{for ~}  j \in I_{\underline{K}, 1 }   }  \ \prod_{j \in I_{\underline{K}, 1 }} \mc K_{K_j , d_j } (cP; s, \alpha, \beta) \bigg) \prod_{j  \in I_{\underline{K}, 2 } }  \mc K_{K_j}(cP; s, \alpha, \beta)    , 
 \end{align*}
where 
$I_{\underline{K}, 1 } = \{ j \leq \tau : | K_j | = 1 \} $ and $ I_{ \underline{K}, 2 } = \{ j \leq \tau : |K_j | \geq 2 \} $. By the estimations in the previous paragraph, we have
 $$C_{\underline{K}} (cP; s, \alpha, \beta)  = C'_{\underline{K}} (cP; s, \alpha, \beta) + O( (\log Q)^{2(k+r)} )  $$
 for $ \underline{K} \in \Pi'$, where
 \begin{equation}\label{CKprime def}
 C'_{\underline{K}} (cP; s, \alpha, \beta) : = \bigg(  \sum_{  \substack{  d_j = 0,1,2,3 \ \textrm{for ~}  j \in I_{\underline{K}, 1 } \\    d_j = 0  \ \textrm{for~some~}  j \in I_{\underline{K}, 1 }          }   }  \ \prod_{j \in I_{\underline{K}, 1 }} \mc K_{K_j , d_j } (cP; s, \alpha, \beta) \bigg) \prod_{j  \in I_{\underline{K}, 2 } }  \mc K_{K_j}(cP; s, \alpha, \beta)   . 
 \end{equation}
If $\underline{K}  \in \Pi_{k+r} \setminus \Pi'  $, then we see that $ |K_j | \geq 2 $ for every $j$ and 
   $$ C_{\underline{K}} (cP; s, \alpha, \beta) = O( (\log Q)^{2(k+r)} ).$$
  Since the integral over $z$  in \eqref{eqn:SL0step1} is absolutely convergent when $\Rep(s) < 0$, we find that
 \begin{align*}
 \Scal_{L,0} & = \sum_{ \underline{K} \in \Pi' } \mu_{k+r} ( \underline{O}, \underline{K}) \, \Scal'_{L,0} (\underline{K}) + O((\log Q)^{2(k+r)} ) , \\
 \mc {M}_U & =\sum_{ \underline{K} \in \Pi' } \mu_{k+r} ( \underline{O}, \underline{K})   \, \mc {M'}_U ( \underline{K}) + O((\log Q)^{2(k+r)} ) ,
 \end{align*}
 where
\begin{equation*}\begin{split}
 \Scal'_{L,0}(\underline{K})   
 &: =    \frac{1}{ (2\pi i)^2 } \int_{(-\delta_1)} \widetilde{\W}(s)Q^s  \zeta(1-s) B(-s) B_4(-s, P) \int_{-\infty}^{\infty} e^{-t^2}        \sum_{  \substack{c \leq C \\ (c, P) = 1}       } \frac{ \mu(c)B_3( -s ,  c ) }{ c^s   \varphi(c) }\       \\ &  
  \times \int_{(\delta_2)} \frac{ \Gamma (1-s) \Gamma (z)}{ \Gamma (1-s+z)}  \bigg( C'_{\underline{K}} (cP;  s, s-z+it, z-it )  +  C'_{\underline{K}}  (cP ; s, z+it, s-z-it )  \bigg) \>dz  \> dt \>  ds  
  \end{split}
  \end{equation*} 
  and 
  \est{
  \mc {M}'_{U} (\underline{K})    := &      -  \widetilde{\W}(0)   B(0)   B_1(0, P) \int_{-\infty}^{\infty} e^{-t^2}      \sum_{ \substack{ c\leq C  \\ (c, P) = 1}   } \frac{ \mu(c) B_2(0 ,c) }{\varphi(c)}  C'_{\underline{K}}  (cP ; 0, it, -it )  \> dt  }
  for $ \underline{K} \in \Pi' $ with $ \delta_1 = \delta_2 = 1/\log Q$. 

Now we first shift the $z$-integral to $\Rep(z) = 1/2 $. Then for each $\underline{K}  \in \Pi'  $, each summand of $ C'_{\underline{K}} (cP;  s, s-z+it, z-it ) $ in \eqref{CKprime def}  has a  factor 
$$ \mc{K}_{K_j , 0}(cP;  s, s-z+it, z-it )  = F_\ell ( - i \mathcal{U} ( 1/2 - s+z-it)) \log Q   $$
 for some $j$ with $ K_j = \{ \ell \}$ and $ \ell \leq k $ or a factor
 $$  \mc{K}_{K_j , 0 }(cP;  s, s-z+it, z-it )  = F_\ell ( i \mathcal{U} ( 1/2 - z + it )) \log Q $$
 for some $j$ with $ K_j = \{ \ell \}$ and $ k <  \ell \leq k +r $. The other factors in \eqref{CKprime def} are bounded by a power of $Q$ uniformly for $s$ and $z$ in any given vertical strips. Thus, we see that  $ C'_{\underline{K}} (cP;  s, s-z+it, z-it ) $ is rapidly decreasing as $ \Imp (z) \to \infty$ by \eqref{eqn F bound}.  Similarly, $C'_{\underline{K}}  (cP ; s, z+it, s-z-it ) $ is also rapidly decreasing as $ \Imp (z) \to \infty$.  Thus we see that the multiple integrals in $\Scal'_{L,0}(\underline{K}) $ are absolutely convergent and we may change the order of integrals to have
\begin{equation*}\begin{split}
 \Scal'_{L,0}(\underline{K})   
 &  =   \int_{-\infty}^{\infty} e^{-t^2}  \frac{1}{ (2\pi i)^2 }  \int_{(1/2)} \int_{(- 1/\log Q )} \widetilde{\W}(s) Q^s  \zeta(1-s) B(-s) B_4(-s, P)         \sum_{  \substack{c \leq C \\ (c, P) = 1}       } \frac{ \mu(c)B_3( -s ,  c ) }{ c^s   \varphi(c) }\       \\ &  
  \times  \frac{ \Gamma (1-s) \Gamma (z)}{ \Gamma (1-s+z)}  \bigg( C'_{\underline{K}} (cP;  s, s-z+it, z-it )  +  C'_{\underline{K}}  (cP ; s, z+it, s-z-it )  \bigg)\>  ds \>dz  \> dt.    
  \end{split}
  \end{equation*} 
  We next shift the $s$-contour to $ \Rep (s)  = 1-1/\mathcal U$. Since $ \zeta(1-s) $ has a simple pole at $s=0$ with the residue $ -1$, we obtain
\begin{equation*}\begin{split}
 \Scal'_{L,0}(\underline{K})   
 &=  \int_{-\infty}^{\infty} e^{-t^2}  \frac{1}{(2 \pi i)^2 } \int_{(1/2)}   \int_{\big(1- \frac 1{\mathcal U} \big)} \widetilde{\W}(s)Q^s  \zeta(1-s) B(-s)        B_4(-s, P)      \sum_{  
  \substack{c \leq C  \\ (c, P) = 1 }      } \frac{ \mu(c)B_3( -s ,  c ) }{ c^s   \varphi(c) }       \\ 
 &  \hskip 0.2in \times  \frac{ \Gamma (1-s) \Gamma (z)}{ \Gamma (1-s+z)} \bigg( C'_{\underline{K}} (cP;  s, s-z+it, z-it )  +  C'_{\underline{K}}  (cP; s, z+it, s-z-it )  \bigg) \>ds \>dz  \> dt     \\
 & \hskip 0.2in +    \widetilde{\W}(0)     B(0)  B_4(0, P)      \sum_{ \substack{ c \leq C  \\ (c, P) = 1}       } \frac{ \mu(c)B_3( 0 ,  c ) }{     \varphi(c) }       \\ 
  &  
    \hskip 0.4in  \times \int_{-\infty}^{\infty} e^{-t^2}  \frac{1}{2 \pi i } \int_{(1/2)}   \big( C'_{\underline{K}} (cP;  0 ,  -z+it, z-it )  +  C'_{\underline{K}}  (cP ; 0, z+it,  -z-it )  \big)  \> \frac{dz}{z}   \> dt     .
  \end{split}\end{equation*}
  From the residue theorem 
  $$   \frac{1}{2 \pi i } \int_{(1/2)}        C'_{\underline{K}} (cP ; 0 ,  -z+it, z-it )     \frac{  dz}{z} =  \frac{1}{2 \pi i } \int_{(- 1/2)}        C'_{\underline{K}} (cP ; 0 ,  -z+it, z-it )     \frac{  dz}{z} + C'_{\underline{K}} (cP;  0,it,-it), $$
  and by the change of variable 
 $$ \frac{1}{2 \pi i } \int_{(- 1/2)}        C'_{\underline{K}} (cP ; 0 ,  -z+it, z-it )     \frac{  dz}{z}  =- \frac{1}{2 \pi i } \int_{( 1/2)}        C'_{\underline{K}} (cP ; 0 ,   z+it,- z-it )     \frac{  dz}{z}. $$
 Therefore
$$   \frac{1}{2 \pi i } \int_{(1/2)}       \big( C'_{\underline{K}} (cP ; 0 ,  -z+it, z-it )  +  C'_{\underline{K}}  (cP ; 0, z+it,  -z-it )  \big) \>   \frac{  dz}{z}  = C'_{\underline{K}} (cP;  0,it,-it). $$
Since $B_3 ( 0,c) = B_2 (0,c)$ and $B_4(0, P) = B_1(0, P) $, it is not difficult to see that the main term of $\mc {M}'_U  (\underline{K} ) $ cancels out the residue at $s = 0$ of $\Scal'_{L, 0}  (\underline{K} ) $. Therefore, we derive at
\begin{align*}
\mc {M}'_{U}& (\underline{K} )+ \Scal'_{L,0}(\underline{K} ) \\
  =&   \int_{-\infty}^{\infty} e^{-t^2}  \frac{1}{2 \pi i } \int_{(1/2)}   \frac{1}{ 2\pi i } \int_{\big(1-\frac 1 {\mc U}\big)} \widetilde{\W}(s)Q^s  \zeta(1-s) B(-s)  B_4(-s, P)      \sum_{  
  \substack{c \leq C  \\ (c, P) = 1 }      } \frac{ \mu(c)B_3( -s ,  c ) }{ c^s   \varphi(c) }       \\ 
 &   \times  \frac{ \Gamma (1-s) \Gamma (z)}{ \Gamma (1-s+z)} ( C'_{\underline{K}} (cP;  s, s-z+it, z-it )  +  C'_{\underline{K}}  (cP;  s, z+it, s-z-it )  ) \>ds \>dz  \> dt   
\end{align*}
for each $ \underline{K} \in \Pi'$. 

We split the integral into two such that
$$ \mc {M}'_{U}  (\underline{K} )+ \Scal'_{L,0}(\underline{K} )  = \mc{M}_1 (\underline{K} )  + \mc{M}_2 (\underline{K} )  ,$$
where
\begin{align*}
  \mc{M}_1  (\underline{K} )=&   \int_{-\infty}^{\infty} e^{-t^2}   \frac{1}{(2 \pi i)^2  }    \int_{(1/2)}   \int_{\big(1-\frac 1 {\mc U}\big)} \widetilde{\W}(s)Q^s  \zeta(1-s) B(-s)  B_4(-s, P)      \sum_{  
  \substack{c \leq C  \\ (c, P) = 1 }      } \frac{ \mu(c)B_3( -s ,  c ) }{ c^s   \varphi(c) }       \\ 
 &   \times  \frac{ \Gamma (1-s) \Gamma (z)}{ \Gamma (1-s+z)}  C'_{\underline{K}} (cP;  s, s-z+it, z-it )    \>ds \>dz  \> dt   
\end{align*}
and
\begin{align*}
  \mc{M}_2 (\underline{K} ) =&   \int_{-\infty}^{\infty} e^{-t^2}   \frac{1}{(2 \pi i)^2  }    \int_{(1/2)}     \int_{\big(1-\frac 1 {\mc U}\big)} \widetilde{\W}(s)Q^s  \zeta(1-s) B(-s)  B_4(-s, P)      \sum_{  
  \substack{c \leq C  \\ (c, P) = 1 }      } \frac{ \mu(c)B_3( -s ,  c ) }{ c^s   \varphi(c) }       \\ 
 &   \times  \frac{ \Gamma (1-s) \Gamma (z)}{ \Gamma (1-s+z)} C'_{\underline{K}}  (cP;  s, z+it, s-z-it )    \>ds \>dz  \> dt   .
\end{align*}
By changing the order of the $s$-integral and the $z$-integral in $\mc{M}_2$ and substituting $z$ by $s-z$, we see that
\begin{align*}
  \mc{M}_2  (\underline{K} ) =&   \int_{-\infty}^{\infty} e^{-t^2}  \frac{1}{(2 \pi i)^2  }     \int_{\big(1-\frac 1 {\mc U}\big)}  \int_{\big( \frac12 -\frac 1 {\mc U}\big)}    \widetilde{\W}(s)Q^s  \zeta(1-s) B(-s)  B_4(-s, P)      \sum_{  
  \substack{c \leq C  \\ (c, P) = 1 }      } \frac{ \mu(c)B_3( -s ,  c ) }{ c^s   \varphi(c) }       \\ 
 &   \times  \frac{ \Gamma (1-s) \Gamma (s-z)}{ \Gamma (1- z)} C'_{\underline{K}}  (cP;  s, s-z+it,  z-it )    \>dz \>ds  \> dt   .
\end{align*}
By shifting the $z$-contour to $\Rep(z)=1/2$ and changing the order of the $s$-integral and $z$-integral,  we have
\es{\label{M1M2K}
  \mc{M}_1 & (\underline{K} ) +  \mc{M}_2  (\underline{K} )  \\
  =&   \int_{-\infty}^{\infty} e^{-t^2}   \frac{1}{(2 \pi i)^2  }    \int_{(1/2)}   \int_{\big(1-\frac 1 {\mc U}\big)} \widetilde{\W}(s)Q^s  \zeta(1-s) B(-s)  B_4(-s, P)         \\ 
 &   \times  \Gamma_0 (s,z)     \sum_{  
  \substack{c \leq C  \\ (c, P) = 1 }      } \frac{ \mu(c)B_3( -s ,  c ) }{ c^s   \varphi(c) }   C'_{\underline{K}} (cP;  s, s-z+it, z-it )    \>ds \>dz  \> dt   ,
}
where $ \Gamma_0 (s,z) $ is defined in \eqref{def:Gamma0sz}.
 Since $\Rep (s) = 1- 1/\mathcal{U}$ and $\Rep (z) = 1/2 $, we have $ ( \Rep(s-z+it)  , \Rep( z-it) )  =  (1/2 - 1/\mathcal{U} , 1/2) $. By \eqref{def KKj} we find that  
 \begin{equation}\label{Kj bound 1}
\mc{K}_{K_j} ( cP ; s, s-z+it, z-it)  \ll \sum_{p \leq Q^4 } \frac{ ( \log p)^2}{p^2   }   \ll 1 
\end{equation}
for  $\Rep (s) = 1- 1/\mathcal{U}$, $\Rep (z) = 1/2 $ and $|K_j |>1$ and by \eqref{eqn F bound}, \eqref{KK def 1} and \eqref{KK def 2} we also find that
\begin{equation}\label{Kj bound 2}\begin{split}
\mc{K}_{K_j } ( cP ; s, s-z+it, z-it) &  \ll \sum_{p \leq Q^4}  \frac{ \log p }{p} \ll \log Q  , \\
\mc{K}_{K_j , 0} ( cP ; s, s-z+it, z-it) & = F_\ell  ( - i \mathcal{U} ( 1/2- s+z-it )) \log Q \\
&  \ll \frac{ \log Q }{  1+  \mathcal{U}^A | \Imp(s-z)+t |^A   }, \\
\mc{K}_{K_j , 1 } ( cP ; s, s-z+it, z-it) & \ll \log Q \int_{-\kappa_\ell}^0 Q^{v (1/2- \Rep(s-z+it)) } dv \ll \log Q , \\
\mc{K}_{K_j , 2 } ( cP ; s, s-z+it, z-it) &  \ll \log Q \int_0^{\kappa_\ell} Q^{ v(-1/2 + \Rep(s) - \Rep (s-z+it)) } dv \ll \log Q , \\
\mc{K}_{K_j , 3 } ( cP ; s, s-z+it, z-it) & \ll \log Q    \\
\end{split} \end{equation}
for any integer $A\geq 0$, $\Rep (s) = 1- 1/\mathcal{U}$, $\Rep (z) = 1/2 $ and  $ K_j = \{ \ell \}  $ with $ \ell \leq k$ and the similar inequalities hold for $ K_j = \{ \ell \}  $ with $k < \ell \leq k+r$. By Lemma \ref{lemma:prime sum}, we also have
 \begin{equation}\label{Kj bound 3}
   \mc{K}_{K_j , 3 } ( cP ; s, s-z+it, z-it)  = O\bigg( 1 + | \Imp(s-z) + t| + | \Imp( z)- t| + \sum_{ p | cP } \frac{ \log p }{p}  \bigg) 
   \end{equation}
 for  $\Rep (s) = 1- 1/\mathcal{U}$, $\Rep (z) = 1/2 $ and $ |K_j | = 1 $.
 We let 
\es{ \label{def:B5} B_5 (s) = \frac{B(-s)}{  \zeta(2-s)} = \prod_p \bigg( 1+ \frac{1}{ (p-1)p^{1-s}} \bigg) \bigg( 1- \frac{1}{ p^{2-s}} \bigg) . }
The function $B_5(s)$ is absolutely convergent when $\Rep(s) < 3/2$ and $B_5(1) = 1.$ 
 It is well-known that $ \frac{ \Gamma(x) \Gamma(y) }{ \Gamma(x+y)}$ is the beta function and
 $$ \bigg| \frac{ \Gamma(x) \Gamma(y) }{ \Gamma(x+y)}\bigg|  = \bigg| \int_0^1 t^{x-1} (1-t)^{y-1}dt \bigg| \leq \int_0^1 t^{\Rep(x)-1} (1-t)^{\Rep(y)-1}dt \leq \frac{ \Gamma(\Rep(x)) \Gamma(\Rep(y)) }{ \Gamma(\Rep(x+y))} $$
 holds for $ \Rep( x) >0$ and $ \Rep (y) > 0$. Thus, we see that
 \begin{equation}\label{gamma factors log Q bound}
 | \Gamma_0 (s,z)| = \bigg|  \frac{ \Gamma(1-s) \Gamma(z) }{ \Gamma(1-s+z)} +  \frac{ \Gamma(1-s) \Gamma(s-z) }{ \Gamma(1-z)}  \bigg|  \ll \log Q 
 \end{equation}
  holds for  $\Rep (s) = 1- 1/\mathcal{U}$ and $\Rep (z) = 1/2 $. If  $\underline{K} \in \Pi'$ contains a set $K_j$ with $ |K_j | \geq  2 $, then by \eqref{CKprime def}--\eqref{Kj bound 2}, \eqref{def:B5} and \eqref{gamma factors log Q bound}, we find that 
  \begin{align*}
  \mc{M}_1  (\underline{K} ) +  \mc{M}_2  (\underline{K} ) \ll  &  Q(\log Q)^{k+r -1 }  \int_{-\infty}^{\infty} e^{-t^2}         \int_{\big(1-\frac 1 {\mc U}\big)} | \widetilde{\W}(s)  \zeta(1-s)   \zeta(2-s)|    \\
  &  \int_{(1/2)} \bigg(   \frac{1}{1+    \mc{U}^2 ( \Imp(z-s)+t)^2 }      +   \frac{ 1}{1+    \mc{U}^2 ( \Imp(z) - t)^2}  \bigg)  \> |dz |  \> |ds | \> dt   \\
  \ll  &  Q(\log Q)^{k+r -2 }  \int_{-\infty}^{\infty} e^{-t^2}         \int_{\big(1-\frac 1 {\mc U}\big)} | \widetilde{\W}(s)  \zeta(1-s)   \zeta(2-s)|     \> |ds | \> dt \\
    \ll  &  Q(\log Q)^{k+r -2 }   \bigg( 1 + \int_{1-1/\mc{U} -i}^{1-1/\mc{U}+i}   \frac{ |ds|}{|1-s|}  \bigg)   \\
 \ll &  Q(\log Q)^{k + r - 2} \log \log Q.
  \end{align*} 
Therefore, it is enough to consider the case $\underline{K} = \underline{O} = \{ \{ 1 \} , \dots, \{ k+r \} \}   $ and we have
\est{   \mc {M }_U + \Scal_{L, 0}  =    \sum_{ \substack{ \vecd \in \{ 0, 1, 2, 3\}^{k+r} \\ d_j = 0 \mathrm{~for~some~} j } } \mc {M }_{  \vecd}  + O(  Q(\log Q)^{k + r - 2} \log \log Q),   }
where $\mc{M}_{\vecd}  $ is defined in \eqref{def:Md}.

If $\vecd \in \{ 0, 1, 2, 3\}^{k+r}$ satisfies $d_{j_1}=0$ and $d_{j_2}= 3 $ for some $j_1 $ and $j_2 $, then by  \eqref{Kj bound 2} -- \eqref{gamma factors log Q bound}, we find that 
  \begin{align*}
  \mc{M}_1  & (\underline{K} ) +  \mc{M}_2  (\underline{K} ) \\
  \ll  &  Q(\log Q)^{k+r   }  \int_{-\infty}^{\infty} e^{-t^2}         \int_{\big(1-\frac 1 {\mc U}\big)} | \widetilde{\W}(s)  \zeta(1-s)   \zeta(2-s)|    \\
  &  \int_{(1/2)} \bigg(   \frac{1 + | \Imp(z-s)-t| + | \Imp(z)-t | )}{1+    \mc{U}^4 ( \Imp(z-s)-t)^4 }      +      \frac{ 1 + | \Imp(z-s)-t| + | \Imp(z)-t | )}{1+    \mc{U}^4 ( \Imp(z) - t)^4}  \bigg)  \> |dz |  \> |ds | \> dt   \\
  \ll  &  Q(\log Q)^{k+r -1  }  \int_{-\infty}^{\infty} e^{-t^2}         \int_{\big(1-\frac 1 {\mc U}\big)} | \widetilde{\W}(s)  \zeta(1-s)   \zeta(2-s)|  (1+ | \Imp ( s) | )     \> |ds | \> dt \\
    \ll  &  Q(\log Q)^{k+r -1 }  \log \log Q.
  \end{align*} 
  Therefore, we prove the lemma.
\end{proof}


%
%
\begin{lemma}\label{lemma 5.7}
Let notations be as defined above. Then we have
 \est{ \mc {M }_U + \Scal_{L, 0}    \ = &  \sum_{ \substack{ 1 \leq j_1 \leq k \\  k+1 \leq j_2 \leq k+r } }    \int_{-\infty}^{\infty} e^{-t^2}  \frac{1}{(2 \pi i)^2 } \int_{(1/2)}     \int_{(1-1/\mc U)} \widetilde{\W}(s)Q^s  \zeta(1-s) \zeta(2-s) B_5 (s)  B_4(-s, P)      \\
&   \times  \Gamma_0 (s,z)    B_6 (s,P)       \mc{P}_{j_1, j_2}   ( s,z,t)       \> ds \> dz  \> dt  +O( Q(\log Q)^{k+r-1}\log \log Q  + Q^{1+\epsilon}/C )    }
for any $\epsilon>0$, where
$$  \mc{P}_{j_1, j_2}( s,z,t ): = \sum_{ \substack{ \vecd  \\ d_{j_1} =  d_{j_2}=0  \\  d_\ell \neq 0 ~ \mathrm{for}~ \ell <j_1, k < \ell < j_2   }} \mc{K}_{ \vecd}(  s, s-z+it, z-it)     $$
for $ j_1  \leq k $ and $ k < j_2 \leq k+r$. Moreover, we have
\es{ \label{Pj1j2szt}
 \mc{P}_{j_1, j_2}& (  s,z,t ) \\
= & \ (\log Q)^{k + r } (-1)^{j_1 + r }  \int_{-\infty}^{\infty} Q^{u_{j_1} (1/2 -s+z-it )} \widehat F_{j_1} (-u_{j_1} ) \> du_{j_1}   \int_{-\infty}^{\infty} Q^{-u_{j_2}(1/2-z+it)} \widehat F_{j_2}( -u_{j_2}) \> du_{j_2}    \\
& \times \prod_{\substack{ 1 \leq j < j_1  \\ \mathrm{or} \\ k + 1 \leq j < j_2 } } \left( \int_{-\infty}^0 Q^{u_j(1/2 - s+z-it )} \widehat F_j (-u_j) \> du_j + \int_0^\infty Q^{ u_j(-1/2 +z-it ) } \widehat F_j ( -u_j) \> du_j     \right) \\ 
& \times  \prod_{ j_1 < j \leq k} \left(  \int_0^{\infty} Q^{u_j(1/2 -s+z-it )} \widehat F_j (-u_j) \> du_j - \int_0^\infty Q^{u_j(-1/2 +z-it ) } \widehat F_j (-u_j) \> du_j   \right)  \\
& \times  \prod_{j_2  < j \leq k + r} \left(  \int_{-\infty}^0 Q^{u_j(1/2 -s+ z-it)} \widehat F_j (- u_j) \> du_j  -  \int_{-\infty}^0  Q^{u_j(-1/2 + z-it ) } \widehat F_j (- u_j) \> du_j    \right) .  }
\end{lemma}

\begin{proof}
 
If $\vecd \in \{ 0,1,2\}^{k+r} $ satisfies the property that $d_j = 0 $ for some $ j > k$ and $ d_j \neq 0 $ for all $ j \leq k $, then we estimate $\mc{M}_{\vecd}$ in Lemma \ref{lemma 5.6} by shifting the $s$-contour to $ \Rep(s)=1-\epsilon$ for $ \epsilon>0$. The $z$-contour remains on the line $ \Rep(z)=1/2$. Then by \eqref{eqn F bound}, \eqref{KK def 1} and \eqref{KK def 2}, we see that 
$$ \mc{K}_{\vecd} ( s, s-z+it, z-it) = O\bigg(   \frac{  (\log Q)^{k+r}  }{ 1+ \mc{U}^2 ( \Imp(z)-t )^2 } \bigg)$$
and
$$ \mc{M}_{  \vecd} = O( Q^{1-\epsilon} ( \log Q)^{k+r-1} ) . $$
If $\vecd \in \{ 0,1,2\}^{k+r} $ satisfies the property that $d_j = 0 $ for some $ j \leq k $ and $ d_j \neq 0 $ for all $ k < j \leq k+r$, then we estimate $\mc{M}_{\vecd}$ by shifting the $s$-contour to $ \Rep(s)=1- \epsilon -1/\log Q  $ and the $z$-contour to $ \Rep(z)=1/2- \epsilon$ for $ \epsilon>0$. Then by \eqref{eqn F bound}, \eqref{KK def 1} and \eqref{KK def 2}, we see that 
$$ \mc{K}_{\vecd} (  s, s-z+it, z-it) = O\bigg(   \frac{  (\log Q)^{k+r}  }{ 1+ \mc{U}^2 ( \Imp(s-z)+t )^2 } \bigg)$$
and
$$ \mc{M}_{  \vecd} = O( Q^{1-\epsilon} ( \log Q)^{k+r -1} ) . $$
The other $\vecd$'s satisfy that $d_{j_1} = d_{j_2} = 0 $ for some $j_1$ and $j_2$ such that $ j_1 \leq k < j_2 $. Hence,  
$$   \mc {M }_U + \Scal_{L, 0}   = \sum_{ \substack{ \vecd \in \{0,1,2\}^{k+r}  \\ d_{j_1} = d_{j_2} =0 \\  \mathrm{for~some}~ j_1 \leq k < j_2   }} \mc{M}_{\vecd} + O( Q^{1-\epsilon} ( \log Q)^{k+r} ) . $$
 
Now we consider the sum
\est{ \sum_{ \substack{ \vecd  \in \{0,1,2\}^{k+r} \\ d_{j_1} = d_{j_2} =0 \\  \mathrm{for~some}~ j_1 \leq k < j_2   }} \mc{K}_{\vecd} (  s, s-z+it, z-it) 
&= \sum_{ \substack{ 1 \leq j_1 \leq k \\  k+1 \leq j_2 \leq k+r } }  \sum_{ \substack{ \vecd  \\ d_{j_1} =  d_{j_2}=0  \\  d_\ell \neq 0 ~ \mathrm{for}~ \ell <j_1, k < \ell < j_2   }} \mc{K}_{ \vecd}(  s, s-z+it, z-it)  . \\
& =  \sum_{ \substack{ 1 \leq j_1 \leq k \\  k+1 \leq j_2 \leq k+r } }  \mc{P}_{j_1, j_2}( s,z,t ).}
We can write $\mc{P}_{j_1, j_2}(  s,z,t ) $  as the following product
\est{       
  \mc K_{\{j_1 \} , 0} \mc K_{\{j_2 \}, 0 } \prod_{\substack{ 1 \leq j < j_1 \\ \mathrm{or} \\ k < j < j_2  }  } \big( \mc K_{\{j \}, 1} + \mc K_{\{j \} , 2}   \big) \prod_{\substack{ j_1 < j \leq k  \\ \mathrm{or}\\   j_2 < j \leq k+r  }} \big(\mc K_{\{j \} , 0} + \mc K_{\{j \}, 1} + \mc K_{\{j \}, 2}  \big) ,}
  where 
  $$\mc K_{\{ j \}, \ell} := \mc K_{\{ j \}, \ell} ( s,  s-z+it, z-it).$$
 We see that
$$ \mc{K}_{ \{j\} , 0 } + \mc{K}_{ \{j\} , 1 } =  \log Q \int_0^\infty Q^{v(1/2-s+z-it)} \hat{F}_j (-v) dv$$
for $j \leq k$ and 
  $$ \mc{K}_{ \{j\} , 0 } + \mc{K}_{ \{j\} , 1 } =  \log Q \int_0^\infty Q^{v(1/2-z+it)} \hat{F}_j (v) dv  =  \log Q \int_{-\infty}^0 Q^{-v(1/2-z+it)} \hat{F}_j (-v) dv $$
for $j > k$. Hence we have \eqref{Pj1j2szt} and  
\est{  \mc {M }_U + \Scal_{L, 0}    \ = &  \sum_{ \substack{ 1 \leq j_1 \leq k \\  k+1 \leq j_2 \leq k+r } }    \int_{-\infty}^{\infty} e^{-t^2}  \frac{1}{(2 \pi i)^2 } \int_{(1/2)}     \int_{(1-1/\mc U)} \widetilde{\W}(s)Q^s  \zeta(1-s) \zeta(2-s) B_5 (s)  B_4(-s, P)      \\
&  \times \Gamma_0 (s,z)   \sum_{  \substack{c \leq C \\ (c, P) = 1}       } \frac{ \mu(c)B_3( -s ,  c ) }{ c^s   \varphi(c) }         \mc{P}_{j_1, j_2}   ( s,z,t)       \> ds \> dz  \> dt  +O( Q(\log Q)^{k+r-1}\log \log Q)  . }
The sum over $ c $ is asymptotic to  
$$ B_6 (s,P) := \sum_{  \substack{c=1 \\ (c, P) = 1}       }^\infty  \frac{ \mu(c)B_3( -s ,  c ) }{ c^s   \varphi(c) }  $$
with an error $ O( C^{-1 })$ for $ \Rep(s )= 1-1/\mc{U} $. Hence, 
 \est{ \mc {M }_U + \Scal_{L, 0}    \ = &  \sum_{ \substack{ 1 \leq j_1 \leq k \\  k+1 \leq j_2 \leq k+r } }    \int_{-\infty}^{\infty} e^{-t^2}  \frac{1}{(2 \pi i)^2 } \int_{(1/2)}     \int_{(1-1/\mc U)} \widetilde{\W}(s)Q^s  \zeta(1-s) \zeta(2-s) B_5 (s)  B_4(-s, P)      \\
&   \times  \Gamma_0 (s,z)    B_6 (s,P)       \mc{P}_{j_1, j_2}   ( s,z,t)       \> ds \> dz  \> dt  +O( Q(\log Q)^{k+r-1}\log \log Q  + Q^{1+\epsilon}/C )    }
for any $\epsilon>0$.

\end{proof}

By expanding the products in \eqref{Pj1j2szt} and changing the order of integrals, we have
 \est{
 \mc{P}_{j_1, j_2}  (  s,z,t )  
=& \  (\log Q)^{k + r } (-1)^{j_1  + r}   \summany_{\substack{T_1,W_1, T_2, W_2, T_3, W_3 \\ T_1 + W_1 = \{1,..,j_1 -1\} \cup \{k + 1,...,  j_2 -1\} \\ T_2 + W_2 = \{ j_1 + 1,.., k\} \\ T_3 + W_3 = \{j_2  + 1, ..., k+r\} }}  (-1)^{|W_2 |+| W_3|}  \\
& \times \int_{\mc{D}_{k+r  }(\vec{T}, \vec{W})}\left( \prod_{j = 1}^{k + r} \widehat F_j(-u_j) \right)   Q^{(1-s)  (u_{ j_1 } + u(\vec{T} ))    + (-1/2 + z - it) u([k+r])}    \> d\vecu ,
}   
where $ \mc{ D}_{k+r }(\vec{T}, \vec{W})$, $u(\vec{T})$ and $ u([k+r])$ are defined in Proposition \ref{prop:als}. Hence,
 \est{ \mc {M }_U + \Scal_{L, 0}   =&   (\log Q)^{k + r }  \sum_{ \substack{ 1 \leq j_1 \leq k \\  k+1 \leq j_2 \leq k+r } }       \summany_{\substack{T_1,W_1, T_2, W_2, T_3, W_3 \\ T_1 + W_1 = \{1,..,j_1 -1\} \cup \{k + 1,...,  j_2 -1\} \\ T_2 + W_2 = \{ j_1 + 1,.., k\} \\ T_3 + W_3 = \{j_2  + 1, ..., k+r\} }}  (-1)^{j_1  + r+ |W_2 |+| W_3|}     \mc{M}  ( j_1, j_2, \vec{T}, \vec{W} ) \\
& + O( Q(\log Q)^{k+r-1}\log \log Q  + Q^{1+\epsilon}/C )  ,  }
where 
\es{ \label{Mj1j2TW}   \mc{M}    &  ( j_1, j_2, \vec{T}, \vec{W} ) \\
  := &      \int_{-\infty}^{\infty} e^{-t^2}  \frac{1}{(2 \pi i)^2 } \int_{(1/2-\epsilon_1)}     \int_{(1-\epsilon_2 )} \widetilde{\W}(s)   \zeta(1-s) \zeta(2-s) B_5 (s)  B_4(-s, P)  B_6 (s,P)    \Gamma_0 (s,z)    \\
& \ \ \ \    \times   \int_{ \mc{D}_{k+r }(\vec{T}, \vec{W})   }\left( \prod_{j = 1}^{k + r} \widehat F_j(-u_j) \right)   Q^{1+ (1-s) (  u_{ j_1 } + u(\vec{T} ) -1)  + (-1/2 + z - it) u([k+r])}    \> d\vecu       \> ds \> dz  \> dt     }
for $0< \epsilon_1 <  \epsilon_2 < 1/100$. We next estimate $ \mc{M}      ( j_1, j_2, \vec{T}, \vec{W} ) $ to derive the following lemma.
\begin{lemma} \label{lemma 5.8} Let notations be as defined above. Then we have
\est{  \mc{M}   ( j_1, j_2, \vec{T}, \vec{W}  )  
= &      \frac{	Q \sqrt{\pi}}{2}   \widetilde{\W}(1)  \prod_{p \, \nmid P} \bigg(1 -  \frac{1}{p^2} - \frac{1}{ p^3} \bigg)   \prod_{p |P } \bigg( 1 - \frac1p \bigg)  \\ &  \times \int_{ \substack{ \mc D_{k + r }(\vec{T}, \vec{W})  \\ u_{ j_1 } + u(\vec{T} )   > 1 }  } \left( \prod_{j=1}^{k+r}  \widehat F_j(-u_j) \right)       (1- u_{ j_1 } - u(\vec{T} )  ) \delta( u ([k+r]) )   \> d\vecu         \\
& \ \ \ \ + O( Q/\log Q)  .  }
\end{lemma}
\begin{proof}
 To make the multiple integral in \eqref{Mj1j2TW} absolutely convergent for $ \Rep (s) $ near $1$, we integrate the $u_{j_2}$-integral by parts twice. 
\es{\label{MjTW}  \mc{M}  & ( j_1, j_2, \vec{T}, \vec{W}   )  \\
 = &    \frac{1}{ (\log Q)^2 }  \int_{-\infty}^{\infty}  \frac{1}{(2 \pi i)^2 } \int_{(1/2-\epsilon_1)}     \int_{(1-\epsilon_2 )} \widetilde{\W}(s)   \zeta(1-s) \zeta(2-s) B_5 (s)  B_4(-s, P)  B_6 (s,P)     \Gamma_0 (s,z)     \\
&    \times   \int_{\mc{D}_{k+r }(\vec{T}, \vec{W})  }\left( \prod_{j\neq j_2 } \widehat F_j(-u_j) \right) \widehat F_{j_2}'' (-u_{j_2} )   Q^{1+ (1-s) (u_{ j_1 } + u(\vec{T} ) -1)  + (-1/2 + z - it) u([k+r])}   \frac{ e^{-t^2}  \> d\vecu       \> ds \> dz  \>  dt  }{   (1/2 - z + it)^2}    }
  Based on the exponent of $Q$ in the integrand, we split the domain $ \mc{D}_{k+r }(\vec{T}, \vec{W})$ into the following three subsets : 
\begin{align*}
  \mc D_{1} &=  \{ \vecu \in  \mc D_{k+r}(\vec{T}, \vec{W})  : u_{ j_1 } + u(\vec{T} ) -1 > 0 , \ u([k+r]) < 0 \} ,  \\ 
   \mc D_{2} &= \{ \vecu \in \mc  D_{k+r }(\vec{T}, \vec{W})  : u_{ j_1 } + u(\vec{T} ) -1 > 0 , \ u([k+r])  \geq 0 \} ,  \\  
   \mc D_{3} &= \{ \vecu \in \mc  D_{k+r }(\vec{T}, \vec{W}) : u_{ j_1 } + u(\vec{T} ) -1 \leq 0  \}  .
 \end{align*}
Clearly,
$$  \mc{M}    ( j_1, j_2, \vec{T}, \vec{W} )  = \sum_{i=1}^3  \mc{M}    ( j_1, j_2, \vec{T}, \vec{W} ; \mc{D}_i ) , $$
where each $\mc{M}    ( j_1, j_2, \vec{T}, \vec{W} ; \mc{D}_i )$ is defined analogously to $ \mc{M}    ( j_1, j_2, \vec{T}, \vec{W} )$ with $\mc{D}_i $ in place of $\mc{D}_{k + r }(\vec{T}, \vec{W})$. We now compute each $\mc{M}    ( j_1, j_2, \vec{T}, \vec{W} ; \mc{D}_i )$ as follows, expecting that the main contribution comes from the region $\mc D_1 $ (Case 1). In each case, we will shift the $s$ and $z$ contours in a way that the real part of the exponent of $Q$ in \eqref{MjTW} is $\leq 1 $.
\\
\\
{\bf Case 1: $\mc{M}    ( j_1, j_2, \vec{T}, \vec{W} ; \mc{D}_1 ) $.}
 The integrand has a double pole at $s=1$.  
  By shifting the $s$-integral to $1+\epsilon$, we pick up the residue at $s=1$.
\est{   \mc{M}  & ( j_1, j_2, \vec{T}, \vec{W}  ;\mc{D}_1 )  \\
 = &    \frac{1}{ (\log Q)^2 }  \int_{-\infty}^{\infty}  \frac{1}{(2 \pi i)^2 } \int_{(1/2-\epsilon_1)}     \int_{(1+ \epsilon  )} \widetilde{\W}(s)   \zeta(1-s) \zeta(2-s)  B_5 (s)  B_4(-s, P)  B_6 (s,P)     \Gamma_0 (s,z)    \\
&   \times     \int_{\mc{D}_1 }\left( \prod_{j\neq j_2 } \widehat F_j(-u_j) \right) \widehat F_{j_2}'' (-u_{j_2} )   Q^{1+ (1-s) (u_{ j_1 } + u(\vec{T} ) -1)  + (-1/2 + z - it) u([k+r])}   \frac{ e^{-t^2} \> d\vecu       \> ds \> dz  \>   dt }{   (1/2 - z + it)^2}    \\
& + \mathrm{Res}_{s=1}.  }
Since each $\widehat F_j(-u_j) $ and its derivatives are compactly supported, by shifting the $z$-integral to $1/2 - 1/\log Q $ the above integral is 
\es{ \label{boundhalfminus}  \ll & \frac{Q }{ (\log Q)^2 }  \int_{-\infty}^{\infty}    \int_{(1/2 -1/{\log Q})}     \int_{(1+ \epsilon  )} | \widetilde{\W}(s)   \zeta(1-s) \zeta(2-s) |     \\
&   \times     \int_{\mc{D}_1 }\left( \prod_{j\neq j_2 }|  \widehat F_j(-u_j) | \right) |  \widehat F_{j_2}'' (-u_{j_2} )  |     \frac{ e^{-t^2} \>  d\vecu       \>  |ds|  \> | dz | \>   dt }{  |1/2 - z + it |^2}    \\
	\ll & \frac{Q }{ (\log Q)^2 }  \int_{-\infty}^{\infty}    \int_{-\infty}^{\infty}         \frac{ 1}{  \frac{1}{(\log Q)^2 }+ (w-t)^2 }     \>  dw  \>   e^{-t^2}dt   \\
\ll & \frac{Q }{  \log Q  } . }
Hence, we see that
$$\mc{M}   ( j_1, j_2, \vec{T}, \vec{W}  ;\mc{D}_1 )  =  \mathrm{Res}_{s=1} + O \bigg(  \frac{Q }{  \log Q  } \bigg). $$
Next we compute the residue at $s = 1$. Since 
 $$ \zeta(2-s) \Gamma(1-s) = \frac{1}{ (s-1)^2 } + O(1)   $$
 as $ s \to 1$, we have 
\es{ \label{ress1}  &  \mathrm{Res}_{s=1}  \\
 = & -  \frac{1}{ (\log Q)^2 }  \int_{-\infty}^{\infty}  \frac{1}{ 2 \pi i  } \int_{(1/2-\epsilon_1)}  \frac{ \partial}{\partial s}  \bigg[ \widetilde{\W}(s)   \zeta(1-s)   B_5 (s)  B_4(-s, P)  B_6 (s,P)    \bigg(  \frac{\Gamma(z)}{\Gamma(1-s+z)} +  \frac{ \Gamma(s-z)}{\Gamma(1-z)}  \bigg) \bigg]_{s=1}   \\
&     \times   \int_{\mc{D}_1 }\left( \prod_{j\neq j_2 } \widehat F_j(-u_j) \right) \widehat F_{j_2}'' (-u_{j_2} )   Q^{1 + (-1/2 + z - it) u([k+r])}   \frac{ e^{-t^2}}{   (1/2 - z + it)^2}  \> d\vecu        \> dz  \> dt  \\   
&+    \frac{2}{ (\log Q)^2 }  \int_{-\infty}^{\infty}   \frac{1}{ 2 \pi i  } \int_{(1/2-\epsilon_1)}   \widetilde{\W}(1)   \zeta(0)   B_5 (1)  B_4(-1 , P)  B_6 (1,P)       \\
&    \times   \int_{\mc{D}_1 }\left( \prod_{j\neq j_2 } \widehat F_j(-u_j) \right) \widehat F_{j_2}'' (-u_{j_2} )   Q^{1  + (-1/2 + z - it) u([k+r])} (u_{ j_1 } + u(\vec{T} ) -1) \log Q  \frac{  e^{-t^2} \> d\vecu        \> dz  \> dt   }{   (1/2 - z + it)^2}  .  }
The first integral can be estimated by shifting the $z$-integral to $ 1/2-1/\log Q$ as in \eqref{boundhalfminus} and it is bounded by $\ll Q/\log Q$.  For the second integral, we shift the $z$-integral to $1/2+\epsilon$ and pick  up the residue at $z=1/2+it$. The shifted integral may be estimated similarly to \eqref{boundhalfminus}. Therefore we find that
\est{ & \mc{M}  ( j_1, j_2, \vec{T}, \vec{W}  ;\mc{D}_1 )  
   \\
 &= -  2 Q   \sqrt{\pi}  \widetilde{\W}(1)   \zeta(0)     B_4(-1 , P)  B_6 (1,P)         \int_{\mc{D}_1 }\left( \prod_{j\neq j_2 } \widehat F_j(-u_j) \right) \widehat F_{j_2}'' (-u_{j_2} )     u([k+r])  (u_{ j_1 } + u(\vec{T} ) -1)   \> d\vecu         \\
 &+    \frac{2}{  \log Q  }  \int_{-\infty}^{\infty}   \frac{1}{ 2 \pi i  } \int_{(1/2 + \epsilon )}   \widetilde{\W}(1)   \zeta(0)   B_5 (1)  B_4(-1 , P)  B_6 (1,P)       \\
&    \times   \int_{\mc{D}_1 }\left( \prod_{j\neq j_2 } \widehat F_j(-u_j) \right) \widehat F_{j_2}'' (-u_{j_2} )   Q^{1  + (-1/2 + z - it) u([k+r])} (u_{ j_1 } + u(\vec{T} ) -1)   \frac{  e^{-t^2} \> d\vecu        \> dz  \> dt   }{   (1/2 - z + it)^2}  + O\bigg(  \frac{Q}{\log Q} \bigg) \\  
 &= -   2Q   \sqrt{\pi}  \widetilde{\W}(1)   \zeta(0)     B_4(-1 , P)  B_6 (1,P)         \int_{\mc{D}_1 }\left( \prod_{j\neq j_2 } \widehat F_j(-u_j) \right) \widehat F_{j_2}'' (-u_{j_2} )     u([k+r])  (u_{ j_1 } + u(\vec{T} ) -1)   \> d\vecu         \\
 &+  O \bigg(     \frac{Q}{  \log Q  }  \int_{-\infty}^{\infty} \int_{-\infty}^{\infty} e^{-t^2}     \frac{  1  }{   \epsilon^2 + (w-t)^2 } dw dt \bigg) +O\bigg(  \frac{Q}{\log Q} \bigg)  \\
   &= -   2Q   \sqrt{\pi}  \widetilde{\W}(1)   \zeta(0)     B_4(-1 , P)  B_6 (1,P)         \int_{\mc{D}_1 }\left( \prod_{j\neq j_2 } \widehat F_j(-u_j) \right) \widehat F_{j_2}'' (-u_{j_2} )     u([k+r])  (u_{ j_1 } + u(\vec{T} ) -1)   \> d\vecu         \\
 &+    O\bigg(  \frac{Q}{\log Q} \bigg) .  }
 We observe that for $ \tilde{u}_{j_2 } := u([k+r])-u_{j_2} $, the $u_{j_2} $-integral is
 \est{ \int_{ -\infty}^{ -\tilde{u}_{j_2} } \widehat{F}''_{j_2} (-u_{j_2}) u([k+r]) du_{j_2} &=  \int_{ -\infty}^{ -\tilde{u}_{j_2} } \widehat{F}'_{j_2} (-u_{j_2})   du_{j_2} = - \widehat{F}_{j_2} (\tilde{u}_{j_2} )\\
 &  = - \int_\R \widehat{F}_{j_2} (-u_{j_2} )\, \delta( u ([k+r]) )\> du_{j_2},   }
 where $\delta$ is the Dirac delta function. Hence,
\est{ & \mc{M} ( j_1, j_2, \vec{T}, \vec{W}  ;\mc{D}_1 )  \\
& =   2Q   \sqrt{\pi}  \widetilde{\W}(1)   \zeta(0)     B_4(-1 , P)  B_6 (1,P)             \int_{ \substack{ \mc D_{k+r}(\vec{T}, \vec{W})  \\ u_{ j_1 } + u(\vec{T} )   > 1 }  } \left( \prod_{j=1}^{k+r}  \widehat F_j(-u_j) \right)       (u_{ j_1 } + u(\vec{T} ) -1) \delta( u ([k+r]) )   \> d\vecu         \\
& \hskip 4in + O( Q/\log Q)  .  }
\\
\\
{\bf Case 2: $\mc{M}   ( j_1, j_2, \vec{T}, \vec{W} ; \mc{D}_2 ) $.}
    We shift the $s$-contour to the line $\Rep(s) = 1 + \epsilon$ as in the first case and pick up the residue at $s=1$. Then we obtain that
  $$\mc{M}   ( j_1, j_2, \vec{T}, \vec{W}  ;\mc{D}_2 )  =  \mathrm{Res}_{s=1} + O \bigg(  \frac{Q }{  \log Q  } \bigg),$$
  where $   \mathrm{Res}_{s=1}$ is in \eqref{ress1}.  To estimate $\mathrm{Res}_{s=1}$ we bound the integrals in \eqref{ress1} trivially and obtain that
$$ \mc{M}   ( j_1, j_2, \vec{T}, \vec{W} ; \mc{D}_2 ) =O \bigg(  \frac{Q }{  \log Q  } \bigg) .$$
\\
\\
{\bf Case 3: $\mc{M}   ( j_1, j_2, \vec{T}, \vec{W} ; \mc{D}_3 ) $.}
 For this case, we shift the $z$-integral to $ 1/2-1/\log Q$ and bound the integral trivially.  Then we see that
 \est{   \mc{M}  & ( j_1, j_2, \vec{T}, \vec{W}  ; \mc{D}_3  )   \\
 \ll  &    \frac{Q}{ (\log Q)^2 }  \int_{-\infty}^{\infty}  \int_{(1/2- 1/\log Q )}     \int_{(1-\epsilon_2 )} | \widetilde{\W}(s)   \zeta(1-s)   |     \\
&    \times   \int_{\mc{D}_3   }\left( \prod_{j\neq j_2 } | \widehat F_j(-u_j) | \right) | \widehat F_{j_2}'' (-u_{j_2} )  |     \frac{ e^{-t^2}  \> d\vecu       \> | ds|  \> |dz|   \>  dt  }{   |1/2 - z + it|^2} \\
\ll  &    \frac{Q}{ (\log Q)^2 }  \int_{-\infty}^{\infty}  \int_{-\infty}^{\infty}       \frac{ e^{-t^2}   \> dw  \>  dt  }{   1/(\log Q)^2 + ( w-t)^2 } \\
 \ll  &   \frac{Q}{\log Q }. }

Combining Cases 1-3 and the facts that $\zeta(0) = -1/2$, $ B_4 (-1,P) = \prod_{p \, \nmid P} (1 -  p^{-2}-   p^{-3} )    $ and $ B_6 (1,P)  = \prod_{p |P } ( 1 - p^{-1} )  $, we derive that
\est{  \mc{M}   ( j_1, j_2, \vec{T}, \vec{W}  )  
= &    	Q \sqrt{\pi}    \widetilde{\W}(1)  \prod_{p \, \nmid P} \bigg(1 -  \frac{1}{p^2} - \frac{1}{ p^3} \bigg)   \prod_{p |P } \bigg( 1 - \frac1p \bigg)  \\ &  \times \int_{ \substack{ \mc D_{k + r }(\vec{T}, \vec{W})  \\ u_{ j_1 } + u(\vec{T} )   > 1 }  } \left( \prod_{j=1}^{k+r}  \widehat F_j(-u_j) \right)       (1- u_{ j_1 } - u(\vec{T} )  ) \delta( u ([k+r]) )   \> d\vecu         \\
& \ \ \ \ + O( Q/\log Q)  .  }
\end{proof}

Therefore, we have
\est{\mc {M }_U & + \Scal_{L, 0} \\
  =&  Q  (\log Q)^{k + r }  \sqrt{\pi}    \widetilde{\W}(1)     \prod_{p \, \nmid P} \bigg(1 -  \frac{1}{p^2} - \frac{1}{ p^3} \bigg)   \prod_{p |P } \bigg( 1 - \frac1p \bigg)    \sum_{ \substack{ 1 \leq j_1 \leq k \\  k+1 \leq j_2 \leq k+r } }       \summany_{\substack{T_1,W_1, T_2, W_2, T_3, W_3 \\ T_1 + W_1 = \{1,..,j_1 -1\} \cup \{k + 1,...,  j_2 -1\} \\ T_2 + W_2 = \{ j_1 + 1,.., k\} \\ T_3 + W_3 = \{j_2  + 1, ..., k+r\} }}\\ 
	& \times  (-1)^{j_1  + r+ |W_2 |+| W_3|}         \int_{ \substack{ \mc D_{k+r }(\vec{T}, \vec{W})  \\ u_{ j_1 } + u(\vec{T} )   > 1 }  } \left( \prod_{j=1}^{k+r}  \widehat F_j(-u_j) \right)       (1- u_{ j_1 } - u(\vec{T} )  ) \delta( u ([k+r]) )   \> d\vecu       \\
& \ \ \ \ \ + O( Q(\log Q)^{k+r-1}\log \log Q  + Q^{1+\epsilon}/C ) ,  }
which proves \eqref{lem:SUplusSL}.


\subsection{Conclusion of the proof of Proposition \ref{prop:als}}\label{proof of prop part 4}
By Equations \eqref{eqn:splittoUL} and \eqref{lem:SUplusSL}, and  Lemmas \ref{lem:SU} and \ref{lem:SLE}, we have 
\es{  \Scal = &  Q (\log Q)^{k+r}    \sqrt{\pi}      \widetilde{\W}(1)  \prod_{p \, \nmid P} \bigg( 1 - \frac{1}{p^2}- \frac{1}{ p^3} \bigg)  \prod_{p|P} \bigg( 1 - \frac1p \bigg)  \mc {I}(k,r) \\
&   + O \left( Q(\log Q)^{k+r-1}\log \log Q  + C Q^{(\kappa'+\kappa'')/2 -1+\epsilon}  + \frac{Q^{1 + \epsilon}}{C}\right) }
for any $\epsilon>0$. Since $\kappa' + \kappa'' \leq 4 -\varepsilon,$ by letting $C = Q^{\varepsilon/3}$, the $O$-terms above are $O(Q(\log Q)^{k+r-1}\log \log Q  )$. Therefore, we finally have
\est{ \Scal = &  Q (\log Q)^{k+r}    \sqrt{\pi}      \widetilde{\W}(1)  \prod_{p \, \nmid P} \bigg( 1 - \frac{1}{p^2}- \frac{1}{ p^3} \bigg)  \prod_{p|P} \bigg( 1 - \frac1p \bigg)  \mc {I}(k,r)    + O(Q(\log Q)^{k + r -1} \log \log Q ).}

\subsection{The estimation of $N_{\underline{G}}$ }\label{complete NG} 
To complete the calculation of $N_{\underline{G}}$, we first evaluate $\Scal(P; S_{12}, S_{22})$ defined in \eqref{Spartition}. We list sets $S_{12}$ and $S_{22}$ in increasing order as $S_{12} = \{ \alpha_1 , \dots, \alpha_{ |S_{12}|} \}$ and $ S_{22} = \{ \beta_1 , \dots, \beta_{ |S_{22}|} \}$. By modifying arguments of Proposition \ref{prop:als} we find that
 \begin{align*}
& \Scal(P; S_{12}, S_{22}) \\
& \ \ \ \  =  Q (\log Q)^{|S_{12}|+|S_{22}|}    \sqrt{\pi}      \widetilde{\W}(1)  \prod_{p \, \nmid P} \bigg( 1 - \frac{1}{p^2}- \frac{1}{ p^3} \bigg)  \prod_{p | P} \pr{1 - \frac 1p} \mc {I}\big(S_{12}, S_{22} \big)   \\
 & \hskip 1in + O\left( Q(\log Q)^{|S_{12}|+|S_{22}| - 1}   \log \log Q  \right),
 \end{align*} 
  where 
  \es{ \label{def:I}   \mc {I}\big(S_{12}, S_{22}\big)     
 &:= \sum_{ \substack{ 1 \leq j_1 \leq |S_{12}| \\   1 \leq j_2 \leq |S_{22}| } }       \summany_{\substack{T_1,W_1, T_2, W_2, T_3, W_3 \\ T_1 + W_1 = \{ \alpha_1,.., \alpha_{j_1 -1} \} \cup \{\beta_1,..., \beta_{  j_2 -1} \} \\ T_2 + W_2 = \{\alpha_{ j_1 + 1},.., \alpha_{|S_{12}|} \} \\ T_3 + W_3 = \{\beta_{j_2  + 1} , ..., \beta_{|S_{22}|}  \} }} (-1)^{j_1  + |S_{22}|+ |W_2 |+| W_3|}    \\ 
 &    \times    \int_{ \substack{ \mc D_{|S_{12}|+|S_{22}| }(\vec{T}, \vec{W})  \\ u_{ \alpha_{j_1 }} + u(\vec{T} )   > 1 }  } \left( \prod_{j \in S_{12}\cup S_{22}}  \widehat F_j(-u_j) \right)       (1- u_{ \alpha_{j_1 }} - u(\vec{T} )  ) \delta( u (S_{12}) + u(S_{22}) )   \> d\vecu   }
 and
  \est{ \mc D_{|S_{12}|+|S_{22}| }(\vec{T}, \vec{W})   =   \bigg\{ \vecu & = ( u_{\alpha_1} , \dots, u_{\alpha_{|S_{12}|}}, u_{ \beta_1 } , \dots, u_{\beta_{|S_{22}|}}   ) \in \R^{|S_{12}|+|S_{22}|}  \\
  &  : u_j < 0  \ \textrm{for} \ j \in T_1 \cup T_3 \cup W_3, \ \textrm{and} \  u_j > 0 \ \textrm{for} \ j \in T_2  \cup W_1  \cup W_2 \bigg\} }
with   $ d\vecu = du_{\alpha_1}  \cdots du_{\alpha_{|S_{12}|}} du_{ \beta_1 }  \cdots du_{\beta_{|S_{22}|}}   $. By Equation \eqref{eqn:CNg} and Lemma \ref{lemma:D asymp}, we have
\est{
N_{\underline{G}}  = & D(\W,Q)  \sum_{    S_1 + S_2 + S_3=  [ \nu ]    }   \bigg( \prod_{\ell \in S_3 } \widehat{F}_\ell (0) \bigg)   (-1)^{|S_1|+ |S_2|} \sum_{ \substack{S_{11}+S_{12} =  S_1 \\   S_{21}+ S_{22} = S_2 \\ |S_{11}| = |S_{21}| \\ S_{12}, S_{22} \neq \emptyset }} \mc {I}\big(S_{12}, S_{22} \big) 
\sum_{ \substack{ \sigma : S_{11} \to S_{21}   \\   bijection} }  \frac{1}{(\log Q)^{2|S_{11}|}}   \\
&  \times  \sum_{ \substack{P  } }\mu^2 (P)  \bigg( \prod_{\ell \in S_{11}}  \frac{   ( \log p_\ell)^2 }{ p_\ell }  \widehat{F}_\ell  \bigg( - \frac{   \log p_\ell }{ \log Q} \bigg)  \widehat{F}_{ \sigma (\ell)}  \bigg(   \frac{   \log p_\ell }{ \log Q} \bigg) \bigg) \prod_{p | P} \bigg( 1 - \frac{1}{p^2}- \frac{1}{ p^3} \bigg)^{-1} \pr{1 - \frac 1p} \\
& \ \ \ + O( Q  \log \log Q   / \log Q)     .
} 
Modifying the proof of Lemma \ref{lem:diagonalCD}, we can show that 
\es{ \label{eqn:NGasymp}
N_{\underline{G}}  = &   D(\W,Q)  \sum_{    S_1 + S_2 + S_3=  [ \nu ]    }   \bigg( \prod_{\ell \in S_3 } \widehat{F}_\ell (0) \bigg)   (-1)^{|S_1|+ |S_2|}   \\
& \times \sum_{ \substack{S_{11}+S_{12} =  S_1 \\   S_{21}+ S_{22} = S_2 \\ |S_{11}| = |S_{21}| \\ S_{12}, S_{22} \neq \emptyset }} \mc {I}\big(S_{12}, S_{22} \big) 
\sum_{ \substack{ \sigma : S_{11} \to S_{21}   \\   bijection} }      \bigg( \prod_{\ell \in S_{11}}  \int_0^\infty v \widehat{F}_\ell (-v) \widehat{F}_{\sigma(\ell)} (v) dv \bigg) \\
& \ \ \ \ + O( Q \log \log Q  / \log Q)      .
} 
 Therefore, by Equations \eqref{eqn:rc rel}, \eqref{eqn:C1GNG} and \eqref{eqn:NGasymp} we conclude that 
 \es{ \label{eqn:lhs estimation complete}   \frac{\mathcal{L}_1 (f, \W, Q)}{  D(\W,Q) }       =&     \sum_{   \underline{ G} \in \Pi_n  } \mu_n ( \underline{O}, \underline{G})    \sum_{   S_1 + S_2 + S_3=  [ \nu ]   }   \bigg( \prod_{\ell \in S_3 } \widehat{F}_\ell (0) \bigg)  
	\sum_{ \substack{ \sigma : S_{1} \to S_{2}   \\   bijection} } \bigg(\prod_{\ell \in S_1 }
 \int_0^\infty v \widehat{F}_\ell (-v) \widehat{F}_{\sigma(\ell)} (v) dv \bigg) \\
	& +  \sum_{   \underline{ G} \in \Pi_n  } \mu_n ( \underline{O}, \underline{G})   \sum_{    S_1 + S_2 + S_3=  [ \nu ]    }   \bigg( \prod_{\ell \in S_3 } \widehat{F}_\ell (0) \bigg)   (-1)^{|S_1|+ |S_2|}    \sum_{ \substack{S_{11}+S_{12} =  S_1 \\   S_{21}+ S_{22} = S_2 \\ |S_{11}| = |S_{21}| \\ S_{12}, S_{22} \neq \emptyset }} \mc {I}\big(S_{12}, S_{22} \big) \\
	& \times \quad \sum_{ \substack{ \sigma : S_{11} \to S_{21}   \\   bijection} }      \bigg( \prod_{\ell \in S_{11}}  \int_0^\infty v \widehat{F}_\ell (-v) \widehat{F}_{\sigma(\ell)} (v) dv \bigg)   \\
	&   +   O\left(\frac{ \log \log Q}{ \log Q }\right) , }   
	where
	 $ \mu_n ( \underline{O}, \underline{G}) $ is defined in Lemma \ref{lemma:cs}  
 and $	\mc {I}\big(S_{12}, S_{22} \big)$ is defined in \eqref{def:I}.

 \section{Comparison with Random Matrix Theory} \label{sec:RMT}
 In this section, we will complete the proof of Theorem \ref{main thrm} by comparing \eqref{eqn:lhs estimation complete} with the integral 
 $$   \int_{ \R^n }  f  (  \vecx ) W^{(n)} ( \vecx )   d\vecx $$
  in \eqref{eq of main thm}.  As mentioned earlier, though this integration is in a nice form, we may need to go through complicated combinatorial arguments to match \eqref{eqn:lhs estimation complete} with the integral above. Instead we shall use a new formula from Conrey and Snaith's work \cite{ConreySnaith}. In particular, we will work with Equation \eqref{eqn:initialConSn} for $\mathcal R$. This $\mathcal R$ composes of two components, say $\mathcal R_0$ and $\mathcal R_1$.  
  
 For $i = 0, 1$, $\mathcal R_i$ corresponds to the sum over $S, T$ in $J^*(A; B)$ (defined in \eqref{def:JAB}), where  $|S| = |T| = i$. With careful analysis for residues of contour integrations in $\mathcal R_i$, it turns out that $\mathcal R_0$ matches with the diagonal terms of $C_{1,\underline{G}}$ in Equation \eqref{eqn:C1GCG} (see Lemma \ref{lem:J0}), and $\mathcal R_i$ corresponds to the off-diagonal terms $N_{\underline G}$ of $C_{1,\underline{G}}$ (see Section \ref{sec:calcJ1}). Moreover, each component $\mathcal I(S_{12}, S_{22})$ in the off-diagonal terms $N_{\underline G}$ corresponds to each component $\mathcal J(S_{12}, S_{22})$ of $\mathcal R_1$ defined in Equation (\ref{def:JS12S22}) (see Lemma \ref{lem:J1}).

 To do this, first we need the following lemma, which expresses the integral as the limit of $n$-correlation of eigenvalues of random unitary matrices of size $N \to \infty$.
 
 \begin{lemma} Let $ f : \R^n \to \R$ be smooth and rapidly decreasing. For an $N \times N$ unitary matrix $X_N$, write its eigenvalues as $ e^{i \theta_j}$ with $-\pi \leq \theta_1 \leq \cdots \leq \theta_N <  \pi$. Then 
 	\begin{equation*}
 	\lim_{N \to \infty}  \int_{U(N)} \sumstar_{1 \leq j_1 , \dots, j_n \leq N} f\bigg( \frac{N \theta_{j_1}}{2 \pi} , \dots , \frac{N \theta_{ j_n}}{2 \pi}  \bigg)  dX_N    =   \int_{ \R^n }  f  (  \vecx ) W^{(n)} ( \vecx )   d\vecx ,
 	\end{equation*}
 where $dX_N$ is the Haar measure on the group of $N \times N$ unitary matrices $U(N)$ and $W^{(n)}(\vecx)$ is defined in (\ref{def:Wn}).
 \end{lemma}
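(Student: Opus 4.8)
The plan is to identify the Haar integral over $U(N)$ with an integral of the $n$-level correlation function of the eigenangle point process of $U(N)$ --- which is a determinant of an explicit kernel --- and then to pass to the large-$N$ scaling limit by dominated convergence.

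First I would recall the classical determinantal description of the CUE (Dyson, Gaudin--Mehta; this is also the unitary case underlying the $n$-correlation formula of \cite{ConreySnaith}): for a Haar-random $X_N\in U(N)$ with eigenangles $\theta_1,\dots,\theta_N\in[-\pi,\pi)$, the point process $\{\theta_j\}$ is determinantal with $n$-point correlation function (with respect to $d\theta_1\cdots d\theta_n$) equal to $\det\bigl(S_N(\theta_i-\theta_j)\bigr)_{1\le i,j\le n}$, where $S_N(\theta)=\tfrac{1}{2\pi}\sin(N\theta/2)/\sin(\theta/2)$; the reproducing kernel $\tfrac{1}{2\pi}\sum_{m=0}^{N-1}e^{im\theta}$ differs from $S_N$ by a unimodular factor which cancels in the determinant. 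This follows from the Weyl integration formula and the Andr\'eief (Gram) expansion of $\prod_{i<j}|e^{i\theta_i}-e^{i\theta_j}|^2$. By the defining property of correlation functions, for $N\ge n$ and any bounded measurable $g$,
\begin{equation*}
\int_{U(N)}\sumstar_{1\le j_1,\dots,j_n\le N}g(\theta_{j_1},\dots,\theta_{j_n})\,dX_N=\int_{[-\pi,\pi)^n}g(\theta_1,\dots,\theta_n)\det\bigl(S_N(\theta_i-\theta_j)\bigr)_{1\le i,j\le n}\,d\theta_1\cdots d\theta_n,
\end{equation*}
the starred sum running over $n$-tuples of distinct indices. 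Taking $g(\boldsymbol\theta)=f\bigl(\tfrac{N\theta_1}{2\pi},\dots,\tfrac{N\theta_n}{2\pi}\bigr)$ and substituting $x_j=N\theta_j/(2\pi)$ (so $d\theta_j=\tfrac{2\pi}{N}dx_j$ and $\theta_i-\theta_j=\tfrac{2\pi}{N}(x_i-x_j)$), and absorbing the $n$ factors $\tfrac{2\pi}{N}$ one into each row of the determinant, the right-hand side becomes
\begin{equation*}
\int_{[-N/2,\,N/2)^n}f(\vecx)\det\!\left(\frac{\sin\bigl(\pi(x_i-x_j)\bigr)}{N\sin\bigl(\pi(x_i-x_j)/N\bigr)}\right)_{1\le i,j\le n}d\vecx .
\end{equation*}

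It then remains to pass the limit through the integral, for which two elementary facts suffice. For every real $v$ one has $\bigl|\sin(\pi v)\bigr|\le N\bigl|\sin(\pi v/N)\bigr|$ (the Fej\'er inequality $|\sin Nu|\le N|\sin u|$ with $u=\pi v/N$), so every entry of the matrix above is bounded by $1$ in absolute value, and hence by Hadamard's inequality the determinant is bounded by $n^{n/2}$ uniformly in $N$ and $\vecx$; consequently the integrand, extended by zero outside $[-N/2,N/2)^n$, is dominated by $n^{n/2}|f(\vecx)|\in L^1(\R^n)$. On the other hand, for each fixed $\vecx$ and all $N>2\max_j|x_j|$ one has $N\sin\bigl(\pi(x_i-x_j)/N\bigr)\to\pi(x_i-x_j)$, so each entry converges to $K_0(x_i,x_j)=\sin(\pi(x_i-x_j))/(\pi(x_i-x_j))$ and the determinant converges to $\det(K_0(x_i,x_j))_{i,j}=W^{(n)}(\vecx)$. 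Dominated convergence then yields
\begin{equation*}
\lim_{N\to\infty}\int_{U(N)}\sumstar_{1\le j_1,\dots,j_n\le N}f\Bigl(\tfrac{N\theta_{j_1}}{2\pi},\dots,\tfrac{N\theta_{j_n}}{2\pi}\Bigr)\,dX_N=\int_{\R^n}f(\vecx)W^{(n)}(\vecx)\,d\vecx,
\end{equation*}
as claimed.

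The only step that genuinely needs care is the first one: invoking the determinantal structure of the CUE eigenangle process with the correct normalization of $S_N$ and the correct treatment of the diagonal terms $j_a=j_b$ --- that is, the identity between the restricted sum $\sumstar$ and the $n$-point correlation density. This is classical and may simply be cited; everything after it is a routine rescaling-plus-dominated-convergence argument, and in particular the expanding domain $[-N/2,N/2)^n$ creates no difficulty because $f$ is rapidly decreasing, so the dominating function $n^{n/2}|f|$ is integrable on all of $\R^n$.
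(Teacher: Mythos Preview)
Your proof is correct and follows essentially the same approach as the paper's: both invoke the determinantal $n$-point correlation formula for the CUE (the paper cites Theorem~3.1 of \cite{ConreySnaith}, you cite the classical Dyson--Gaudin--Mehta description), rescale by $x_j=N\theta_j/(2\pi)$, and pass to the limit. Your treatment is in fact more careful than the paper's on the final step, supplying the uniform bound via the Fej\'er inequality and Hadamard's inequality to justify dominated convergence, where the paper simply appeals to the rapid decay of $f$.
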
 
  Note that the condition $ - \pi \leq \theta_1 \leq \cdots \leq \theta_N < \pi$  in the above lemma is also required for Theorem 3.4 of \cite{ConreySnaith}, which will be used in the proof of Proposition \ref{sn:eq1}. 
  \begin{proof}
  By Theorem 3.1 of \cite{ConreySnaith}, we have
  \begin{equation*}
  \begin{split}
   \int_{U(N)} & \> \sumstar_{1 \leq j_1 , \dots, j_n \leq N} f\bigg( \frac{N \theta_{j_1}}{2 \pi} , \dots , \frac{N \theta_{ j_n}}{2 \pi}  \bigg) dX_N  \\
  & = \frac{1}{(2 \pi )^n} \int_{ [-\pi ,  \pi]^n }  f \bigg(  \frac{Nx_1}{2 \pi} , \dots, \frac{ N x_n}{2 \pi} \bigg) \det_{n \times n } S_N ( x_k - x_j ) d\vecx  \\
  & =    \int_{ [-N/2 , N/2 ]^n }  f  (   x_1  , \dots,  x_n ) \frac{1}{N^n}  \det_{n \times n } S_N \bigg( \frac{ 2 \pi}{N}( x_k -x_j ) \bigg)  d\vecx  ,
  \end{split}
  \end{equation*}
  where 
  $$ S_N ( x ) = \frac{  \sin( N x /2)}{ \sin ( x/2)} . $$
 It is easy to see that 
$$  \lim_{N \to \infty}   \frac{1}{N^n}  \det_{n \times n } S_N \bigg( \frac{ 2 \pi}{N}( x_k - x_j ) \bigg)   = W^{(n)} ( \vecx ) .$$ 
  Since $f$ has a rapid decay, we have 
  \begin{equation*}
  \begin{split}
  & \lim_{N \to \infty} \int_{U(N)} \sumstar_{1 \leq j_1 , \dots, j_n \leq N} f\bigg( \frac{N \theta_{j_1}}{2 \pi} , \dots , \frac{N \theta_{ j_n}}{2 \pi}  \bigg) dX_N \\ 
  & =    \int_{ \R^n }  f  (   x_1  , \dots,  x_n )  \lim_{N \to \infty} \frac{1}{N^n}  \det_{n \times n } S_N \bigg( \frac{ 2 \pi}{N}( x_k -x_j ) \bigg)  d\vecx    =    \int_{ \R^n }  f  ( \vecx )  W^{(n)} ( \vecx )   d\vecx . 
    \end{split}
  \end{equation*}
 \end{proof}
 By the above lemma, Theorem \ref{main thrm} is equivalent to   
 \est{ \lim_{ Q \to \infty} \frac{ \mathcal{L}_1 ( f, \W, Q)}{D(\W, Q)} = \lim_{N \rightarrow \infty} \int_{U(N)} \sumstar_{1 \leq j_1 , \dots, j_n \leq N} f\bigg( \frac{N \theta_{j_1}}{2 \pi} , \dots , \frac{N \theta_{ j_n}}{2 \pi}  \bigg) dX_N. }
 Let  $ f(x_1 , \dots, x_n ) = \prod_{i=1}^n f_i (x_i) $ have $C4$-property and let $ \underline{G}= \{G_ 1 , \dots,G_{\nu } \} \in \Pi_n$ be a partition of $ [n]= \{1, 2,..., n\}$. Define
 $$ F_\ell (x) = \prod_{i \in G_\ell} f_i (x)   $$
  as in \eqref{def:Fell}.
 By combinatorial sieving in Lemma \ref{lemma:cs},  we have
 \begin{equation}\label{eqn:combsieveRandom}\begin{split}
 &   \lim_{N \to \infty}  \int_{U(N)} \sumstar_{1 \leq j_1 , \dots, j_n \leq N} f\bigg( \frac{N \theta_{j_1}}{2 \pi} , \dots , \frac{N \theta_{ j_n}}{2 \pi}  \bigg)  dX_N  \\
 & = \lim_{N \to \infty}  \int_{U(N)}    \sum_{   \underline{ G} \in \Pi_n  } \mu_n ( \underline{O}, \underline{G})    \sum_{1 \leq j_1, ..., j_\nu \leq N} \prod_{\ell = 1}^{\nu} F_{\ell}\left(\frac{N\theta_{j_\ell}}{2\pi}\right) \> dX_N  .
 \end{split}\end{equation}
Then Theorem \ref{main thrm} can be deduced from Equations \eqref{eqn:rc rel}, \eqref{eqn:combsieveRandom} and the following proposition.

\begin{proposition} \label{sn:eq1} 
Let $D(\W, Q)$ and $C_{1, \underline{G}} $   be defined in Equations \eqref{def:DWQ} and \eqref{eqn:C1Fginitial}, respectively. Then
 \begin{equation*}
 \lim_{Q \rightarrow \infty} \frac{C_{1, \underline{G}} }{D(\W, Q)}  = \lim_{N \to \infty}  \int_{U(N)}   \sum_{1 \leq j_1, ..., j_\nu \leq N} \prod_{\ell = 1}^{\nu} F_{\ell}\left(\frac{N\theta_{j_\ell}}{2\pi}\right) \> dX_N.
 \end{equation*}
	
\end{proposition}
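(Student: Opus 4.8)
The plan is to reduce the proposition to a purely combinatorial identity and then match the two sides term by term. On the number-theory side everything is already in hand: combining \eqref{eqn:C1GNG} with the asymptotic \eqref{eqn:NGasymp} for $N_{\underline{G}}$ shows that
\begin{equation*}
\lim_{Q\to\infty}\frac{C_{1,\underline{G}}}{D(\W,Q)} = \mathcal{A}_{\underline{G}} + \mathcal{B}_{\underline{G}},
\end{equation*}
where
\begin{equation*}
\mathcal{A}_{\underline{G}} := \sum_{S_1+S_2+S_3=[\nu]}\bigg(\prod_{\ell\in S_3}\widehat{F}_\ell(0)\bigg)\sum_{\substack{\sigma:S_1\to S_2\\ \text{bijection}}}\ \prod_{\ell\in S_1}\int_0^\infty v\,\widehat{F}_\ell(-v)\,\widehat{F}_{\sigma(\ell)}(v)\,dv
\end{equation*}
is the ``diagonal'' part and $\mathcal{B}_{\underline{G}}$ is the ``off-diagonal'' part appearing, with $D(\W,Q)$ divided out, in \eqref{eqn:NGasymp} -- a signed sum over the set splittings $S_{11}+S_{12}=S_1$, $S_{21}+S_{22}=S_2$ (with $|S_{11}|=|S_{21}|$, $S_{12},S_{22}\neq\emptyset$) and over bijections $\sigma:S_{11}\to S_{21}$, built from the functionals $\mc{I}(S_{12},S_{22})$ of \eqref{def:I}. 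All the arithmetic (the Euler products, the weight $\W$, the $t$-average) has been absorbed into $D(\W,Q)$ via Lemma \ref{lemma:D asymp}, so it remains only to prove that the random-matrix limit on the right of the proposition equals $\mathcal{A}_{\underline{G}}+\mathcal{B}_{\underline{G}}$, an identity between two explicit functionals of $(\widehat{F}_1,\dots,\widehat{F}_\nu)$ that depends only on the hypothesis $\sum_i\eta_i<4$.

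\textbf{The random-matrix side.} I would first write $\sum_{1\le j_1,\dots,j_\nu\le N}\prod_{\ell}F_\ell(N\theta_{j_\ell}/2\pi)=\prod_{\ell=1}^\nu\big(\sum_j F_\ell(N\theta_j/2\pi)\big)$ and apply Theorem 3.4 of \cite{ConreySnaith}, which evaluates the $U(N)$-average of such a product, in the limit $N\to\infty$, by a formula built solely from the Fourier transforms $\widehat{F}_\ell$ and valid for transforms of arbitrary support (the ordering $-\pi\le\theta_1\le\cdots\le\theta_N<\pi$ fixed in the preceding lemma is exactly the normalization that theorem requires). After substituting the Fourier inversion $F_\ell(iz)=\int_\R\widehat{F}_\ell(-u)e^{-2\pi uz}\,du$ and passing the limit inside, the Conrey-Snaith formula becomes a finite sum in which a set of ``unused'' singletons of $[\nu]$ each contribute a factor $\widehat{F}_\ell(0)$ and the remaining indices are contracted either through its ``main'' pair-contraction part or through its lower-order ``swap'' terms, every contraction carrying a choice of integration contour. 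Evaluating the resulting contour integrals by residues, the main part contributes products $\prod\int_0^\infty v\,\widehat{F}_\ell(-v)\,\widehat{F}_{\sigma(\ell)}(v)\,dv$ and should reproduce $\mathcal{A}_{\underline{G}}$, whereas the extra residue picked up whenever a contour is dragged across the support of one of the $\widehat{F}_\ell$ (which happens exactly in the range where the combined support exceeds $2$) produces the support-restricted integrals of \eqref{def:I}, which should reassemble into $\mathcal{B}_{\underline{G}}$.

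\textbf{Matching and the main obstacle.} The hard part will be the term-by-term identification of the Conrey-Snaith expansion with $\mathcal{A}_{\underline{G}}+\mathcal{B}_{\underline{G}}$, that is, pinning down the bijection between the combinatorial data on the two sides: the ``unused'' singletons must correspond to $S_3$, the contractions handled by the main part to $S_{11}\cup S_{21}$ together with $\sigma$, and inside a given swap term the choices of contour (left versus right of the relevant pole) must reproduce the set splittings $T_i+W_i$, the indices $j_1,j_2$, the sign $(-1)^{j_1+|S_{22}|+|W_2|+|W_3|}$, the domain $\mc{D}_{|S_{12}|+|S_{22}|}(\vec{T},\vec{W})$, the weight $1-u_{\alpha_{j_1}}-u(\vec{T})$ with its constraint $u_{\alpha_{j_1}}+u(\vec{T})>1$ (from the residue that marks the passage into the off-diagonal range), and the Dirac delta $\delta(u(S_{12})+u(S_{22}))$ (from the residue at the pole forcing $u(S_{12})+u(S_{22})=0$) appearing in \eqref{def:I}. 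The only analytic ingredient needed is the collection of residue evaluations already performed in Section \ref{proof of prop part 3}; the remaining difficulty is purely combinatorial bookkeeping, and above all checking that every sign comes out right. Once this is done the proposition follows, and with it -- via \eqref{eqn:rc rel} and \eqref{eqn:combsieveRandom} -- Theorem \ref{main thrm}.
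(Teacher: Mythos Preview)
Your plan is essentially the paper's proof: apply the Conrey--Snaith contour formula (Theorems 3.3 and 3.4 of \cite{ConreySnaith}) to the $U(N)$ average, split off the $\widehat{F}_\ell(0)$ factors for the ``unused'' indices $S_3$, and then match the remaining piece against $\mathcal{A}_{\underline{G}}+\mathcal{B}_{\underline{G}}$ by residue calculus. The paper carries this out exactly as you outline, with Lemma~\ref{lemma:Fizw1w2} doing the residue bookkeeping that produces the domains $\mc{D}_{|S_{12}|+|S_{22}|}(\vec{T},\vec{W})$ and the signs.

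One point where your description is slightly off and worth sharpening before you write the details: the hypothesis $\sum_i\eta_i<4$ does \emph{not} enter as a condition on which residues are picked up in the off-diagonal part. In the Conrey--Snaith expansion of $J^*(z_{S_2};-z_{S_1})$ there is a sum over subsets $S\subset z_{S_2}$, $T\subset -z_{S_1}$ with $|S|=|T|$, and each such pair carries a factor $e^{-N(\sum_S+\sum_T)}$. The terms with $|S|=|T|=0$ give $\mathcal{A}_{\underline{G}}$, those with $|S|=|T|=1$ give $\mathcal{B}_{\underline{G}}$, and the role of the support bound is precisely to kill all terms with $|S|=|T|\geq 2$: for such terms the exponential factor is $\ll e^{-4N\delta_1}$ while $\prod_\ell F_\ell(iNz_\ell/2\pi)\ll e^{N\delta_1\sum\kappa_\ell}$, so the product tends to $0$ as $N\to\infty$ exactly when $\sum\kappa_\ell<4$. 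This is the single place on the random-matrix side where the C4-Property is invoked, and without making it explicit your two-term dichotomy (``main'' versus ``swap'') is unjustified. Once this is in place, the rest of your matching goes through as you describe.
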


\subsubsection*{Proof of Proposition \ref{sn:eq1}} 
 We want to apply Theorems 3.3 and 3.4 in \cite{ConreySnaith} to 
$$   \mc{R} :=      \lim_{N \to \infty}  \int_{U(N)}   \sum_{1 \leq j_1, ..., j_\nu \leq N} \prod_{\ell = 1}^{\nu} F_{\ell}\left(\frac{N\theta_{j_\ell}}{2\pi}\right) \> dX_N  .$$
Theorem 3.4 in \cite{ConreySnaith} requires the periodicity of each function $F_{\ell}$ to cancel the integrals on the horizontal segments in the proof \cite[p.499]{ConreySnaith}. In our case, since we are taking the limit $N \to \infty$, we see that the limits of the integrals on the horizontal segments converge to zero. Thus, we can still apply Theorems 3.3 and 3.4 to find that
    \begin{equation} \begin{split} \label{eqn:initialConSn}
   \mc{R}   = &   \lim_{N \to \infty }  \frac{1}{(2 \pi i )^\nu}   \sum_{  S_1+S_2+S_3   = [ \nu ] }  (-1)^{|S_1|+|S_3|}   N^{|S_3|} \\
& \times  \int_{ \mathcal{C}_+^{|S_2|}}  \int_{ \mathcal{C}_-^{|S_1|+|S_3|} } J^* (z_{S_2} ; -z_{S_1} )  \prod_{\ell = 1}^{\nu} F_{\ell}\left(\frac{ iN }{2\pi} z_\ell \right) dz_{S_3} dz_{S_1} dz_{S_2}  ,
  \end{split} \end{equation}
where 
$ \mathcal{C}_+ $ denotes the path from $ \delta_1 - \pi i $ up to $ \delta_1 + \pi i $, $ \mathcal{C}_- $ denotes the path from $ - \delta_1 + \pi i $ down to $ - \delta_1 - \pi i $ for some $ \delta_1 >0$, $z_{S_i} = \{ z_\ell : \ell \in S_i \} $, $ -z_{S_i } = \{ -z_\ell : \ell \in S_i \} $ and $dz_{S_i } = \prod_{\ell \in S_i } dz_\ell $. 
 	\es{ \label{def:JAB} J^*  (A ; B) := \sum_{\substack{S \subset A, T \subset B \\ |S| = |T|  }} e^{-N(\sum_{\widehat{\alpha} \in S} \widehat{\alpha} + \sum_{\widehat{\beta} \in T} \widehat{\beta})} \frac{Z(S,T)Z(S^-, T^-)}{Z^\dagger(S, S^-) Z^\dagger(T, T^-)} \sum_{\substack{(A-S) + (B - T) \\ = U_1 + ... + U_Y \\ |U_y| \leq 2}} \prod_{y = 1}^Y H_{S, T}(U_y),}
where $S^- = \{ - \hat{\alpha} : \hat{\alpha} \in S \}$, 
$$Z (A, B) = \prod_{\substack{\alpha \in A \\ \beta \in B }} z(\alpha + \beta), \qquad  Z^\dagger(A, B) = \prod_{\substack{\alpha \in A \\ \beta \in B \\ \alpha + \beta \neq 0}} z(\alpha + \beta) $$
with $z(x) = ( 1- e^{-x})^{-1} $,  $A-S = \{ \alpha \in A : \alpha \not \in S \}$, $(A-S) + (B-T)$ is the disjoint union of two sets $A-S$ and $B-T$, and 
 	\est{H_{S, T}(W) = \left\{ \begin{array}{ll} \sum_{\widehat{\alpha} \in S} \frac{z'}{z}(\alpha - \widehat{\alpha}) - \sum_{\widehat{\beta} \in T} \frac{z'}{z}(\alpha + \widehat{\beta}) & {\rm if} \ W = \{\alpha\} \subset A-S ,\\
 			\sum_{\widehat{\beta} \in T} \frac{z'}{z}(\beta - \widehat{\beta}) - \sum_{\widehat{\alpha} \in S} \frac{z'}{z}(\beta + \widehat{\alpha}) & {\rm if} \ W = \{\beta\} \subset B-T ,\\
 			\pr{\frac{z'}{z}}'(\alpha + \beta) & {\rm if} \ W = \{\alpha, \beta\} \ \textrm{with} \ \alpha \in A-S, \beta \in B - T ,\\
 			0 & {\rm otherwise.}\end{array}\right.}
The innermost sum of $J^*(A; B)$ is the sum over all partitions of $(A-S) + (B - T)$ into singletons or doubletons $U_1, ..., U_Y $.

 We change the orientation of the $z_\ell$-integral in \eqref{eqn:initialConSn} for each $ \ell \in S_1 \cup S_3 $ and this removes the factor $ (-1)^{|S_1|+|S_3|} $. Since $ F_\ell$ is rapidly decreasing, we can extend each vertical integrals. Thus 
$$  \mc{R}=   \lim_{N \to \infty }  \frac{1}{(2 \pi i )^\nu}   \sum_{  S_1+S_2+S_3   = [ \nu ] }    N^{|S_3|} \int_{ (\delta_1)^{|S_2|}}  \int_{ (-\delta_1)^{|S_1|+|S_3|} } J^* (z_{S_2} ; -z_{S_1} )  \prod_{\ell = 1}^{\nu} F_{\ell}\left(\frac{ iN }{2\pi} z_\ell \right)  dz_{S_3} dz_{S_1} dz_{S_2}  .
$$
 Since
$$\frac{N}{2 \pi i } \int_{(- \delta_1)} F_\ell \bigg( \frac{iN}{2 \pi} z_\ell \bigg)  dz_\ell  =  \frac{1}{i}\int_{(0)} F_{\ell}(i z) dz = \widehat{F}_{\ell}(0) $$
for each $\ell \in S_3$, 
\est{   \mc{R}=  &    \sum_{  S_1+S_2+S_3   = [ \nu ] } \bigg( \prod_{\ell \in S_3} \widehat{F}_\ell (0) \bigg)  \mc{R}(S_1, S_2), }
where 
\est{
\mc{R} (S_1, S_2 ) :=&   \lim_{N \to \infty } \frac{1}{(2 \pi i )^{|S_1|+|S_2|}}   \int_{ (\delta_1)^{|S_2|}}  \int_{ (-\delta_1)^{|S_1| } } J^* (z_{S_2} ; -z_{S_1} )  \prod_{\ell \in S_1 \cup S_2 } F_{\ell}\left(\frac{ iN }{2\pi} z_\ell \right)  dz_{S_1}   dz_{S_2}  . }

We now consider $J^* (z_{S_2} ; -z_{S_1} )$. When $ |S|=|T|\geq 2 $,
$$ | e^{-N(\sum_{\widehat{\alpha} \in S} \widehat{\alpha} + \sum_{\widehat{\beta} \in T} \widehat{\beta})} | \leq e^{ -4N\delta_1}.$$
Combining above with  \eqref{def:Fell} and the C4-Property in Section \ref{sec:intro},  we have
$$ \prod_{\ell = 1}^{\nu} F_{\ell}\left(\frac{ iN }{2\pi} z_\ell \right)  \ll  \prod_{\ell = 1}^{\nu}  \int_{-\infty}^\infty | \widehat{F}_{\ell}(v)   e^{Nz_\ell v } | dv  \ll e^{N \delta_1 (4-\varepsilon) } $$
 and  the contribution to $\mc{R}$  is  
$$ \ll   N^\nu   e^{-\varepsilon N \delta_1} \to 0 $$
as $ N \to \infty$.  Hence, the main contribution of $\mc{R}$ comes solely from the cases $|S|=|T|=0,1$.  
Let $\mathcal J_i$ be the contribution from the case $|S| = |T| = i$ for $i = 0, 1$. Then 
\es{ \label{eqn:twocaseJ}\mc{R} (S_1 , S_2 ) 
&=:  \mathcal J_0(S_1 , S_2) + \mathcal J_1(S_1 , S_2) .}
Define
\es{\label{def:Ri}   \mc{R}_i =     \sum_{  S_1+S_2+S_3   = [ \nu ] } \bigg( \prod_{\ell \in S_3} \widehat{F}_\ell (0) \bigg)  \mathcal J_i (S_1, S_2)  }
for each $i = 0,1 $, so that
\es{\label{def:R} \mc{R} = \mc{R}_0 + \mc{R}_1 . }

 \subsection{ Calculation of $\mathcal J_0 (S_1, S_2)$} In this section, we will show the following lemma.
 
 \begin{lemma} \label{lem:J0} Let notations be as defined above. Then
 	\begin{equation*} \begin{split}
 	\mathcal{J}_0(S_1,  S_2) = &  \sum_{ \substack{ \sigma: S_1 \to S_2 \\ bijection} }         \prod_{\ell \in S_1} \bigg(        \int_0^\infty    v \widehat{F}_\ell ( -v  )  \widehat{F}_{\sigma(\ell)} (v)  \> dv      \bigg), 
 	\end{split} \end{equation*}
and so it can be easily deduced that 
 	\est{   \mc{R}_0 =     \sum_{  S_1+S_2+S_3   = [ \nu ] } \bigg( \prod_{\ell \in S_3} \widehat{F}_\ell (0) \bigg)    \sum_{ \substack{ \sigma: S_1 \to S_2 \\ bijection} }         \prod_{\ell \in S_1} \bigg(        \int_0^\infty    v \widehat{F}_\ell ( -v  )  \widehat{F}_{\sigma(\ell)} (v)  \> dv      \bigg) .  }
\end{lemma}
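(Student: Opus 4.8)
The plan is to extract the $|S|=|T|=0$ term of $J^*(z_{S_2};-z_{S_1})$, to observe that it collapses to a sum over perfect matchings between $A:=z_{S_2}$ and $B:=-z_{S_1}$, and then to evaluate each matched pair by a short contour computation. When $S=T=\emptyset$ every exponential $e^{-N(\cdots)}$ and every ratio of $Z$'s in the definition of $J^*$ is an empty product equal to $1$, so the $|S|=|T|=0$ contribution to $J^*(A;B)$ is exactly $\sum_{A+B=U_1+\cdots+U_Y,\ |U_y|\le2}\prod_{y}H_{\emptyset,\emptyset}(U_y)$. From the formula for $H_{S,T}$ with $S=T=\emptyset$ one reads off that $H_{\emptyset,\emptyset}(W)=0$ whenever $W$ is a singleton, or a doubleton lying entirely in $A$ or entirely in $B$, while $H_{\emptyset,\emptyset}(\{\alpha,\beta\})=(z'/z)'(\alpha+\beta)$ for $\alpha\in A$, $\beta\in B$. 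Hence the only surviving partitions are those all of whose blocks are $A$–$B$ doubletons; in particular this contribution vanishes unless $|S_1|=|S_2|$, which matches the fact that the sum over bijections on the right-hand side of the lemma is empty in that case, and when $|S_1|=|S_2|$ such a matching is precisely a bijection $\sigma\colon S_1\to S_2$, the block $\{z_{\sigma(\ell)},-z_\ell\}$ contributing $(z'/z)'(z_{\sigma(\ell)}-z_\ell)$.

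For a fixed $\sigma$ the contour integral defining $\mathcal J_0(S_1,S_2)$ factorises over $\ell\in S_1$ into a product of $I_\ell:=\frac{1}{(2\pi i)^2}\int_{(\delta_1)}\int_{(-\delta_1)}(z'/z)'(w-z)\,F_\ell\!\big(\tfrac{iN}{2\pi}z\big)F_{\sigma(\ell)}\!\big(\tfrac{iN}{2\pi}w\big)\,dz\,dw$, where $z=z_\ell$ runs on $\Rep=-\delta_1$ and $w=z_{\sigma(\ell)}$ on $\Rep=\delta_1$, so that $\Rep(w-z)=2\delta_1>0$. On that half-plane I would use $z(x)=(1-e^{-x})^{-1}$ to write $(z'/z)'(x)=\frac{e^{x}}{(e^{x}-1)^2}=\sum_{n\ge1}n\,e^{-nx}$, interchange the (for fixed $N$ absolutely convergent) sum with the two integrals, and evaluate the resulting single integrals: the substitution $w\mapsto\frac{2\pi i}{N}w$ turns $F_{\sigma(\ell)}\!\big(\tfrac{iN}{2\pi}w\big)$ into $F_{\sigma(\ell)}(-w)$, and moving the resulting horizontal contour to the real axis — legitimate because $\widehat F_{\sigma(\ell)}$ is compactly supported, so $F_{\sigma(\ell)}$ is entire with rapid decay on horizontal lines — gives $\frac{1}{2\pi i}\int_{(\delta_1)}e^{-nw}F_{\sigma(\ell)}\!\big(\tfrac{iN}{2\pi}w\big)\,dw=\frac1N\widehat F_{\sigma(\ell)}(n/N)$, and likewise $\frac{1}{2\pi i}\int_{(-\delta_1)}e^{nz}F_\ell\!\big(\tfrac{iN}{2\pi}z\big)\,dz=\frac1N\widehat F_\ell(-n/N)$. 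Hence $I_\ell=\frac{1}{N^2}\sum_{n\ge1}n\,\widehat F_\ell(-n/N)\widehat F_{\sigma(\ell)}(n/N)$, a Riemann sum with step $1/N$ and nodes $v=n/N$ for $\int_0^\infty v\,\widehat F_\ell(-v)\widehat F_{\sigma(\ell)}(v)\,dv$; since $\widehat F_\ell,\widehat F_{\sigma(\ell)}$ are continuous and compactly supported this converges, and the $I_\ell$ being uniformly bounded in $N$ lets us pass $\lim_N$ through the finite sum over $\sigma$ and the finite products over $\ell$. Multiplying over $\ell\in S_1$ and summing over $\sigma$ yields the claimed formula for $\mathcal J_0(S_1,S_2)$, and the displayed formula for $\mathcal R_0$ is then immediate from its definition in \eqref{def:Ri}.

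The only genuinely analytic point, and the one I expect to cost the most care, is justifying the contour shifts and the interchange of $\lim_N$, $\sum_n$, and the integrals: one records that on the lines $\Rep z=\pm\delta_1$ the function $F_\ell\!\big(\tfrac{iN}{2\pi}z\big)$ decays like $e^{N\delta_1\kappa_\ell}\,(1+(N|\Imp z|)^{A})^{-1}$ for every $A$ — a direct consequence of the compact support of $\widehat F_\ell$ — and combines this with $\sum_{n\ge1}n\,e^{-2n\delta_1}<\infty$ to control all tails, and then applies dominated convergence for the Riemann-sum limit. None of this is deep, but it is precisely where the uniformity needed to move the limit inside the finite sums over $S_1,S_2,S_3$ and over $\sigma$ is verified.
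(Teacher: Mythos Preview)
Your proof is correct and the combinatorial reduction to bijections $\sigma$ is identical to the paper's. The difference lies in how the factor
\[
I_\ell=\frac{1}{(2\pi i)^2}\int_{(\delta_1)}\int_{(-\delta_1)}\Big(\tfrac{z'}{z}\Big)'\!(w-z)\,F_\ell\!\Big(\tfrac{iN}{2\pi}z\Big)F_{\sigma(\ell)}\!\Big(\tfrac{iN}{2\pi}w\Big)\,dz\,dw
\]
is evaluated. The paper first rescales $z_\ell\mapsto \tfrac{2\pi}{N}z_\ell$, then uses the pointwise limit $\tfrac{1}{(iN)^2}(z'/z)'\big(\tfrac{2\pi}{N}x\big)\to \tfrac{1}{(2\pi i)^2 x^2}$ to replace $(z'/z)'$ by $1/(z_{\sigma(\ell)}-z_\ell)^2$, and finally computes the resulting double integral by Fourier inversion and a contour shift picking up a residue at the double pole. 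You instead expand $(z'/z)'(x)=\sum_{n\ge1}n\,e^{-nx}$ on $\Rep(x)>0$, evaluate each term explicitly as $\tfrac{1}{N^2}n\,\widehat F_\ell(-n/N)\widehat F_{\sigma(\ell)}(n/N)$, and recognise the sum as a Riemann sum for the target integral. Your route is slightly longer on paper but more self-contained: the limit--integral exchange in the paper's argument is done with a bare pointwise limit and no explicit domination, whereas your Riemann-sum formulation makes the $N\to\infty$ step transparent (the summand is compactly supported in $n/N$). Conversely, the paper's approach has the advantage that the limiting kernel $1/(z_{\sigma(\ell)}-z_\ell)^2$ and the contour computation \eqref{double integral} recur later in the treatment of $\mathcal J_1$, so isolating it as a standalone identity pays off downstream.
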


\begin{proof}
  For this case, $S = T = \emptyset$, and so $\mathcal J_0 (S_1, S_2)$ equals
$$  \lim_{N \to \infty } \frac{1}{(2 \pi i )^{|S_1|+|S_2|}}   \int_{ (\delta_1)^{|S_2|}}  \int_{ (-\delta_1)^{|S_1| } } \sum_{\substack{ z_{S_2}   + (-z_{S_1})     \\ = U_1 + \cdots + U_Y \\ |U_y| \leq 2}} \prod_{y = 1}^Y H_{ \emptyset, \emptyset} (U_y)    \prod_{\ell \in S_1 \cup S_2 } F_{\ell}\left(\frac{ iN }{2\pi} z_\ell \right)  dz_{S_1}   dz_{S_2}  
$$
for $\delta_1 > 0$, where
\est{H_{ \emptyset, \emptyset} (W) = \left\{ \begin{array}{ll} 
		\pr{\frac{z'}{z}}'(   \alpha + \beta ) & {\rm if} \ W = \{\alpha, \beta\} \ \textrm{with} \ \alpha \in   z_{S_2} , \beta \in    - z_{S_1}  ,  \\
		0 & {\rm otherwise.}\end{array}\right.}
Notice that $ \prod_{y = 1}^Y H_{ \emptyset, \emptyset} (U_y)$ is non-zero if and only if every $U_j$ contains one element from $-z_{S_1}$ and the other element from $z_{S_2}$. Thus, for each partition $U_1 + \cdots + U_Y$, there is a natural bijection $\sigma : S_1 \to S_2 $, defined by 
$$\sigma(\ell)= \ell'$$ 
when $ -z_\ell  , z_{\ell'} \in U_j $ for some $j$. Hence $ \mathcal{J}_0(S_1,  S_2)$ equals
    \begin{equation*} \begin{split}
 &    \sum_{ \substack{ \sigma: S_1 \to S_2 \\ bijection} }  \lim_{N \to \infty }       \prod_{\ell \in S_1} \bigg(  \frac{1}{(2 \pi i )^2 }    \int_{ (\delta_1) }  \int_{ (-\delta_1)  }  \bigg( \frac{ z'}{z} \bigg)'  ( z_{\sigma(\ell)} - z_{\ell})     F_{\ell}\left(\frac{ iN }{2\pi} z_\ell \right) F_{\sigma(\ell)}\left(\frac{ iN }{2\pi} z_{\sigma(\ell)}  \right)  \> dz_{\ell}  \> dz_{\sigma(\ell)}   \bigg)  \\
 & = \sum_{ \substack{ \sigma: S_1 \to S_2 \\ bijection} }  \lim_{N \to \infty }       \prod_{\ell \in S_1} \bigg(  \frac{1}{(iN)^2 }    \int_{ (\delta_1) }  \int_{ (-\delta_1)  }  \bigg( \frac{ z'}{z} \bigg)' \bigg( \frac{  2\pi }{N} ( z_{\sigma(\ell)} - z_{\ell}) \bigg)  F_{\ell} (i z_{ \ell})   F_{\sigma(\ell)} ( i z_{\sigma(\ell)}  )  \> dz_{\ell}  \> dz_{\sigma(\ell)}   \bigg)  .
\end{split} \end{equation*}
Since
 $$  \lim_{N \to \infty} \frac{1}{(iN)^2}    \pr{\frac{z'}{z}}' \bigg( \frac{ 2 \pi }{N}( z_{\sigma(\ell)} - z_\ell )\bigg) = \frac{ 1 }{ ( 2 \pi i)^2  ( z_{\sigma(\ell)} - z_\ell  )^2  }  ,$$
we have
    \begin{equation} \label{double integral 1}
\mathcal{J}_0(S_1,  S_2) =   \sum_{ \substack{ \sigma: S_1 \to S_2 \\ bijection} }         \prod_{\ell \in S_1} \bigg(  \frac{1}{(2 \pi i )^2 }    \int_{ (\delta_1) }  \int_{ (-\delta_1)  }   F_{\ell} (i z_{ \ell})   F_{\sigma(\ell)} ( i z_{\sigma(\ell)}  )  \frac{1}{ (z_\ell - z_{\sigma(\ell)} )^2}\> dz_{\ell}  \> dz_{\sigma(\ell)}   \bigg)  .
\end{equation}
The double integrals above is
 \es{ \label{double integral 2}     & =    \frac{1}{( 2 \pi i )^2 } \int_{(\delta_1)}   \int_{ ( - \delta_1 ) }   \frac{1 }{( z_{\sigma(\ell)} - z_\ell )^2 }  F_{\sigma(\ell)} ( i z_{\sigma(\ell)}  ) \int_\R  e^{ - 2 \pi  v z_\ell}    \widehat{F}_\ell(-v) \> dv \> dz_\ell \> dz_{\sigma(\ell)} \\
    & =    \frac{1}{( 2 \pi i )^2 } \int_{(\delta_1)} \int_{\R} \int_{ ( - \delta_1 ) }   \frac{ e^{ - 2 \pi  v z_\ell}  }{( z_{\sigma(\ell)} - z_\ell )^2 }  \> dz_\ell  F_{\sigma(\ell)} ( i z_{\sigma(\ell)}  )  \widehat{F}_\ell(-v) \> dv \> dz_{\sigma(\ell)}. }
  We shift the $z_\ell$ contour right when $ v \geq 0 $ with the residue at the pole $z_\ell = z_{\sigma(\ell)}$ and left when $ v < 0 $, then the above is
    \es{\label{double integral 3} 
   &=   \frac{1}{  2 \pi i  } \int_{(\delta_1)} \int_{0}^\infty        2 \pi v    e^{ - 2 \pi  v z_{\sigma(\ell)} }   F_{\sigma(\ell)} ( i z_{\sigma(\ell)}  )  \widehat{F}_\ell(-v) \> dv \> dz_{\sigma(\ell)}  \\
   & =           \int_0^\infty    v \widehat{F}_\ell ( -v  )  \widehat{F}_{\sigma(\ell)} (v)  \> dv  ,   }
and this completes the proof of the lemma.

\end{proof}
 \subsection{ Calculation of $\mathcal J_1 (S_1, S_2)$}  \label{sec:calcJ1}
 This is the case $|S| = |T |=1 $ in $\mc{R}(S_1 , S_2 )  $. There exist $ \alpha \in S_1 $ and $ \beta \in S_2 $ such that $ S = \{ z_\beta \} $ and $ T = \{ - z_\alpha \} $. For $\delta_1 > 0$, we then have
 \est{
 \mc{J}_1 (S_1, S_2 ) :=&   \lim_{N \to \infty } \frac{1}{(2 \pi i )^{|S_1|+|S_2|}}   \int_{ (\delta_1)^{|S_2|}}  \int_{ (-\delta_1)^{|S_1| } }   \prod_{\ell \in S_1 \cup S_2 } F_{\ell}\left(\frac{ iN }{2\pi} z_\ell \right)  \\    
 & \times \sum_{\substack{ \alpha \in S_1 \\ \beta  \in S_2 }} \frac{ e^{ N( z_\alpha - z_\beta) }           }{   (1-e^{ z_\alpha - z_\beta} ) ( 1- e^{-z_\alpha + z_\beta}) } \sum_{\substack{ ( - z_{ S_1 \setminus \{ \alpha \} } ) + z_{ S_2 \setminus \{ \beta \} }   \\ = U_1 + ... + U_Y \\ |U_y| \leq 2}} \prod_{y = 1}^Y H_{\{ z_\beta \} , \{ - z_\alpha \} }(U_y)    dz_{S_1}   dz_{S_2}  , }
  where  for $ \ell_1 \in S_1  \setminus \{  \alpha\}  $ and $  \ell_2  \in S_2 \setminus \{  \beta \} $, 
 \begin{equation*}\begin{split}
 H_{\{ z_\beta \} , \{ - z_\alpha \} } ( \{ z_{\ell_2}  \} ) &  =  \frac{1}{ 1- e^{ z_{\ell_2} - z_\beta }}  -   \frac{1}{  1- e^{ z_{\ell_2} - z_\alpha }} ,  \\
H_{\{ z_\beta \} , \{ - z_\alpha \} } ( \{ -z_{\ell_1} \}) & =     \frac{1}{ 1- e^{ z_\alpha - z_{\ell_1} } }  -    \frac{1}{ 1- e^{  z_\beta - z_{\ell_1}  } } , \\
H_{\{ z_\beta \} , \{ - z_\alpha \} } ( \{ z_{\ell_2} ,  -z_{\ell_1} \}  ) & =    \frac{e^{  z_{\ell_2}  - z_{\ell_1} }}{ (1- e^{  z_{\ell_2} - z_{\ell_1} }  )^2 }, 
\end{split}\end{equation*}
 and otherwise, $ H_{\{ z_\beta \} , \{ - z_\alpha \} } ( W) = 0 $. As indicated in Remark 3.2 of \cite{ConreySnaith}, even though each term $H_{\{ z_\beta \} , \{ - z_\alpha \} } ( \{ z_{\ell_2} ,  -z_{\ell_1} \}  )$ has a singularity on the contour, the integrand has no poles because they cancel.  We would like to shift contours in such a way that we avoid singularities of the integrand. If $ S_1 = \{ \alpha_1 , \dots, \alpha_{|S_1|} \} $  with $ \alpha_1 < \cdots < \alpha_{|S_1|}$ and $ S_2 = \{ \beta_1 , \dots, \beta_{|S_2|}\} $ with $ \beta_1 < \cdots < \beta_{|S_2|}$, then define
\est{ \int_{ S_1^-} & := \int_{ (- \delta_1)} \cdots \int_{ ( - \delta_{  |S_1| })}, \qquad  \int_{ S_2^+}   := \int_{ ( \delta_{ 1})} \cdots \int_{ ( \delta_{  |S_2| })} }
for some $ 0 < \delta_1 < \cdots < \delta_{\max( |S_1| , |S_2|)} $. After we replace $ \int_{(-\delta_1)^{|S_1|}}$ and $ \int_{( \delta_1)^{|S_2|}}$ by $ \int_{ S_1^-}$ and $\int_{S_2^+}$, respectively,  changing the order of integrals and summations is legitimate.   

We next estimate the integrand of $ \mc{J}_1 (S_1, S_2 )$. 
Define 
\est{ S_{11} &= S_1 \setminus S_{12}, \qquad S_{12}  = \{ \alpha \} \cup \{   \ell \in S_1 :    \{- z_\ell \} = U_y   ~~ \mathrm{for~some}~~ y   \} , \\
S_{21} &= S_2 \setminus S_{22} , \qquad S_{22}   = \{ \beta  \} \cup \{ \ell \in S_2 :    \{z_\ell \} = U_y ~~ \mathrm{for~some}~~ y   \} .  }
Further let a bijection $ \sigma : S_{11} \to S_{21} $  be defined such that for any $ \ell \in S_{11}$, $ \{ -z_\ell , z_{\sigma(\ell )} \} = U_y $ for some $y$. Hence, 
\est{ & \sum_{\substack{ \alpha \in S_1 \\ \beta  \in S_2 }} \frac{ e^{ N( z_\alpha - z_\beta) }           }{   (1-e^{ z_\alpha - z_\beta} ) ( 1- e^{-z_\alpha + z_\beta}) } \sum_{\substack{ ( - z_{ S_1 \setminus \{ \alpha \} } ) + z_{ S_2 \setminus \{ \beta \} }   \\ = U_1 + ... + U_Y \\ |U_y| \leq 2}} \prod_{y = 1}^Y H_{\{ z_\beta \} , \{ - z_\alpha \} }(U_y)    \\   
& = \sum_{ \substack{ S_1 = S_{11}+S_{12} \\ S_2 = S_{21}+ S_{22} \\ S_{12} , S_{22} \neq \emptyset }}    \sum_{ \substack{ \alpha \in S_{12} \\ \beta \in S_{22}}}  \frac{ e^{ N( z_\alpha - z_\beta) }           }{    ( 1- e^{z_\alpha - z_\beta})(1-e^{-z_\alpha + z_\beta} )  }  \prod_{ \ell \in S_{12} \setminus \{ \alpha \} }
H_{\{ z_\beta \} , \{ - z_\alpha \} } ( \{ -z_\ell \}) \prod_{ \ell \in S_{22} \setminus \{ \beta \} } H_{\{ z_\beta \} , \{ - z_\alpha \} } ( \{ z_\ell  \} )  \\
&  \hskip 1in \times \sum_{  \substack{ \sigma:S_{11} \to S_{21} \\ bijection}}  \prod_{ \ell \in S_{11}}
H_{\{ z_\beta \} , \{ - z_\alpha \} } ( \{ z_{\sigma(\ell)} ,  -z_\ell \}  ).
 } 
 We then apply the above identity to $  \mc{J}_1 (S_1, S_2 ) $, substitute $ z_\ell$ by $ 2 \pi z_\ell / N$ for all $ \ell \in S_1 \cup S_2 $ and take the limit $N \to \infty$. Since
 $$  \lim_{N \to \infty} iN ( 1- e^{2 \pi x/N}) = - 2 \pi i x   , $$ 
 we find that 
 \est{
 \mc{J}_1 (S_1, S_2 )  =& \sum_{ \substack{ S_1 = S_{11}+S_{12} \\ S_2 = S_{21}+ S_{22} \\ S_{12} , S_{22} \neq \emptyset  }}  \frac{1}{(  2 \pi i  )^{|S_1|+|S_2|}}   \int_{ S_2^+}  \int_{ S_1^- }   \prod_{\ell \in S_1 \cup S_2 } F_{\ell}(i z_\ell  )   \\   
&  \times     \sum_{ \substack{ \alpha \in S_{12} \\ \beta \in S_{22}}}  \frac{ - e^{  2 \pi  ( z_\alpha - z_\beta) }           }{   ( z_\alpha - z_\beta  )^2 }   \prod_{ \ell \in S_{12} \setminus \{ \alpha \} } \bigg(  \frac{1}{ - z_\alpha + z_\ell    }  -    \frac{1}{ -  z_\beta + z_\ell   }  \bigg)  \prod_{ \ell  \in S_{22} \setminus \{ \beta \} } \bigg(  \frac{1}{ - z_\ell + z_\beta  }  -   \frac{1}{   - z_\ell + z_\alpha }  \bigg)  \\
&  \times  \sum_{  \substack{ \sigma:S_{11} \to S_{21} \\ bijection}}  \prod_{ \ell \in S_{11}}  \frac{1}{ (  z_{\sigma(\ell)} - z_\ell    )^2 }          dz_{S_1}   dz_{S_2} . }
By \eqref{double integral 1}--\eqref{double integral 3} and combining the products on $S_{12} \setminus \{\alpha \}$ and $ S_{22} \setminus \{ \beta \} $, we have
 \es{\label{partitionJ1}
 \mc{J}_1 (S_1, S_2 )  =& \sum_{ \substack{ S_1 = S_{11}+S_{12} \\ S_2 = S_{21}+ S_{22} \\ S_{12} , S_{22} \neq \emptyset  }}  \bigg(  \sum_{  \substack{ \sigma:S_{11} \to S_{21} \\ bijection}} \bigg( \prod_{ \ell \in S_{11}}  \int_0^\infty v \widehat{F}_\ell (-v) \widehat{F}_{ \sigma(\ell)} (v) dv   \bigg) \bigg) \mc{J} ( S_{12}, S_{22}) , }
where
\es{ \label{def:JS12S22} 
 \mc{J} ( S_{12}, S_{22})   :=  &    \sum_{ \substack{ \alpha \in S_{12} \\ \beta \in S_{22}}}   \frac{ -1}{(  2 \pi i  )^{|S_{12}|+|S_{22}|}}   \int_{  S_{22}^+ }  \int_{  S_{12}^-  }   \bigg( \prod_{\ell \in S_{12} \cup S_{22} } F_{\ell}(i z_\ell  )\bigg)        \frac{  e^{  2 \pi  ( z_\alpha - z_\beta) }           }{   ( z_\alpha - z_\beta  )^2 }    \\
&  \times \prod_{ \ell \in S_{12} \cup S_{22} \setminus \{ \alpha, \beta \} } \bigg(  \frac{1}{z_\ell - z_\alpha      }  -    \frac{1}{  z_\ell  -  z_\beta    }  \bigg)   dz_{S_{12}}   dz_{S_{22}} . }

The evaluation of $\mathcal J(S_{1}, S_2)$ follows from the calculation of $\mathcal J(S_{12}, S_{22})$ below.

\begin{lemma} \label{lem:J1} Let $\mc{I} ( S_{12}, S_{22})$ be defined as Equation \eqref{def:I}. Then
	$$ \mc{J} (S_{12}, S_{22}) = (-1)^{|S_{12}|+|S_{22}|} \ \mc{I} ( S_{12}, S_{22}) . $$
\end{lemma}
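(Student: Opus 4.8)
The plan is to evaluate the contour integral defining $\mc{J}(S_{12}, S_{22})$ by the same residue-shifting strategy used in the computation of $\mc{M}_1$ in Section \ref{proof of prop part 3}, translated from the ``number theory'' side to the ``random matrix'' side. First I would use the Fourier inversion formula $F_\ell(iz_\ell) = \int_\R \widehat{F}_\ell(-u_\ell) e^{-2\pi u_\ell z_\ell}\,du_\ell$ to rewrite each $F_\ell$ factor, so that the $z_\ell$-integrals for $\ell \neq \alpha, \beta$ become integrals of rational functions of the $z$'s against exponentials. For each $\ell \in (S_{12}\cup S_{22})\setminus\{\alpha,\beta\}$, the factor $\bigl(\frac{1}{z_\ell - z_\alpha} - \frac{1}{z_\ell - z_\beta}\bigr)$ is a difference of two simple-pole kernels, and Lemma \ref{lemma:Fizw1w2} is precisely the tool that evaluates $\frac{1}{2\pi i}\int_{(\delta)} F_\ell(iz_\ell)\bigl(\frac{1}{z_\ell-w_1} - \frac{1}{z_\ell-w_2}\bigr)\,dz_\ell$ in the three regimes determined by where the contour $\Rep(z_\ell) = \delta_\ell$ sits relative to $\Rep(w_1) = \delta_\alpha < 0$ and $\Rep(w_2) = \delta_\beta > 0$. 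Because the $\delta_\ell$ are chosen with $0 < \delta_1 < \cdots < \delta_{\max}$ and split between the $S_{12}^-$ and $S_{22}^+$ families, for each $\ell$ we land in the middle regime and get $\int_{-\infty}^0 \widehat{F}_\ell(-u)e^{-2\pi u z_\alpha}du + \int_0^\infty \widehat{F}_\ell(-u)e^{-2\pi u z_\beta}du$, which after tracking signs produces a sum over a choice $T/W$ for each such $\ell$ of whether $u_\ell < 0$ (pairing with $z_\alpha$) or $u_\ell > 0$ (pairing with $z_\beta$); summing over $\alpha \in S_{12}$, $\beta\in S_{22}$ gives the outer sum $\sum_{j_1,j_2}$ and the sets $T_1,W_1,T_2,W_2,T_3,W_3$ exactly as in \eqref{def:I}, with the index-ordering conventions ($\alpha = \alpha_{j_1}$, $\beta = \beta_{j_2}$) dictating which $\ell < j_1$ or $\ell$ between give rise to $T_1\cup W_1$ versus $T_2\cup W_2$ versus $T_3\cup W_3$, and the $(-1)^{|W_2|+|W_3|}$ and $(-1)^{j_1+|S_{22}|}$ signs emerging from the orientation of the $S_{12}^-$ contours and the relabelling.

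Next I would handle the remaining $z_\alpha, z_\beta$ double integral, which carries the factor $-\,e^{2\pi(z_\alpha - z_\beta)}/(z_\alpha - z_\beta)^2$ together with the accumulated exponential $e^{-2\pi(u(\vec T) + u_{\alpha_{j_1}})z_\alpha}$ in $z_\alpha$ and $e^{-2\pi(\text{rest})z_\beta}$ in $z_\beta$ coming from all the inversions (including the $\widehat{F}_\alpha, \widehat{F}_\beta$ factors). The $e^{2\pi(z_\alpha - z_\beta)}$ shifts the effective exponent, and the double pole at $z_\alpha = z_\beta$ is what produces, upon computing the residue, the linear factor $(1 - u_{\alpha_{j_1}} - u(\vec T))$ together with the Dirac delta $\delta(u(S_{12}) + u(S_{22}))$ enforcing the ``diagonal'' constraint that the total exponent vanish — this is the analogue of the $\zeta(2-s)\Gamma(1-s)$ double pole in $\mc{M}_1$ and the $\int \widehat{F}_{j_2}''(-u_{j_2})\,du_{j_2}$ computation that turned into a delta function there. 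Shifting the $z_\beta$ contour to cross $z_\alpha$ (or vice versa) and picking up the residue at $z_\alpha = z_\beta$, the leftover shifted integral vanishes by the support conditions on the $\widehat{F}$'s (total support $< 4$, analogous to the vanishing of Cases 2--4 in $\mc{M}_1$), and the residue, after an integration by parts that converts the $1/(z_\alpha - z_\beta)^2$ into the delta constraint, gives exactly the integral $\int_{\mc D(\vec T, \vec W),\ u_{\alpha_{j_1}} + u(\vec T) > 1} \bigl(\prod \widehat{F}_j(-u_j)\bigr)(1 - u_{\alpha_{j_1}} - u(\vec T))\,\delta(u(S_{12})+u(S_{22}))\,d\vecu$ appearing in \eqref{def:I}.

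Assembling the sign bookkeeping: the orientation changes on the $|S_{12}|$ contours of $\int_{S_{12}^-}$ and the explicit $-1$ in front of $\mc{J}(S_{12},S_{22})$, combined with the $(-1)^{j_1 + |S_{22}| + |W_2| + |W_3|}$ from the per-$\ell$ sign analysis and the residue computation, must collapse to the single prefactor $(-1)^{|S_{12}|+|S_{22}|}$ times $\mc{I}(S_{12},S_{22})$. I expect the main obstacle to be precisely this sign and index-ordering accounting — making sure that the partition of $\{\alpha_1,\dots,\alpha_{j_1-1}\}\cup\{\beta_1,\dots,\beta_{j_2-1}\}$ into $T_1 + W_1$, of $\{\alpha_{j_1+1},\dots\}$ into $T_2 + W_2$, and of $\{\beta_{j_2+1},\dots\}$ into $T_3 + W_3$ matches up correctly with the middle/left/right regimes of Lemma \ref{lemma:Fizw1w2} as forced by the ordering $\delta_1 < \cdots < \delta_{\max}$, and that the orientation reversals and residue signs combine to the stated power of $-1$. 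The analytic content (residue at the double pole, vanishing of shifted integrals by the support hypothesis, reduction to the delta-function integral) is routine given the parallel computation already carried out for $\mc{M}_1$; the combinatorial matching is where the care is needed, and I would organize it by first treating the case $S_{11} = S_{21} = \emptyset$ (so $S_{12} = S_1$, $S_{22} = S_2$) to fix notation, then noting that the general case is formally identical since $\mc{J}(S_{12},S_{22})$ only sees the sets $S_{12}, S_{22}$.
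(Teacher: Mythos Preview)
Your overall strategy matches the paper's proof: apply Lemma~\ref{lemma:Fizw1w2} to each $z_\ell$-integral for $\ell\notin\{\alpha,\beta\}$, expand into the $T_i/W_i$ sums, then evaluate the remaining $z_\alpha,z_\beta$ double integral by picking up the residue at the double pole $z_\alpha=z_\beta$, which produces the linear factor $(1-u_{\alpha_{j_1}}-u(\vec T))$ and, after the final $z_\beta$-integral, the delta constraint.

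There is one concrete error and one imprecision worth flagging. First, your assertion that ``for each $\ell$ we land in the middle regime'' of Lemma~\ref{lemma:Fizw1w2} is false, and this is exactly the point where the $T_1,T_2,T_3,W_1,W_2,W_3$ structure and the signs $(-1)^{|W_2|+|W_3|}$ come from. With $w_1=z_{\alpha_{j_1}}$ on $\Rep=-\delta_{j_1}$ and $w_2=z_{\beta_{j_2}}$ on $\Rep=+\delta_{j_2}$, the contour for $z_{\alpha_\ell}$ sits at $-\delta_\ell$: if $\ell<j_1$ you are in the middle regime, but if $\ell>j_1$ you have $-\delta_\ell<-\delta_{j_1}$ and are in the \emph{first} regime (two integrals over $(0,\infty)$ with a minus sign). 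Likewise $z_{\beta_\ell}$ with $\ell>j_2$ lands in the \emph{third} regime. The three regimes are precisely what give the three products over $\ell<j_1$ or $\ell<j_2$ (middle), $j_1<\ell\le|S_{12}|$ (first), $j_2<\ell\le|S_{22}|$ (third) in the paper's display, and the minus signs in the first and third regimes are the source of $(-1)^{|W_2|+|W_3|}$ and of the domain conditions $u_j>0$ versus $u_j<0$ in $\mc D(\vec T,\vec W)$. If you actually used the middle-regime formula for every $\ell$ you would get the wrong answer.

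Second, in the $z_{\alpha_{j_1}}$-integral the shifted contour vanishes by plain exponential decay of $e^{-2\pi(u_{\alpha_{j_1}}+u(\vec T)-1)z_{\alpha_{j_1}}}$ as $\Rep(z_{\alpha_{j_1}})\to+\infty$ when the exponent is positive (and shifting to $-\infty$ otherwise gives zero with no residue); the support bound $\sum\kappa_\ell<4$ is not needed here --- it was already spent earlier to discard $|S|=|T|\ge 2$ in $J^*$. With those two corrections your outline coincides with the paper's argument.
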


\begin{proof}
Write $S_{12} = \{ \alpha_1 , \dots, \alpha_{|S_{12}|} \} $ with  $ \alpha_1 < \cdots < \alpha_{|S_{12}|}$ and $S_{22} = \{ \beta_1 , \dots, \beta_{|S_{22}|} \} $ with  $ \beta_1 < \cdots < \beta_{|S_{22}|}$. Let $ \alpha = \alpha_{j_1}$ and $\beta = \beta_{j_2}$ in the above sum. Then
\est{ 
 \mc{J} ( S_{12}, S_{22})   =  &    \sum_{ \substack{ j_1 \leq | S_{12}| \\ j_2 \leq | S_{22}| }}   \frac{  1}{(  2 \pi    )^2 }   \int_{   (-\delta_{j_1}) }  \int_{ (\delta_{j_2} )  }     F_{\alpha_{j_1}}(i z_{\alpha_{j_1}}  )  F_{\beta_{j_2}}(i z_{\beta_{j_2}}   )        \frac{  e^{  2 \pi  ( z_{\alpha_{j_1}} - z_{\beta_{j_2}} ) }           }{   ( z_{\alpha_{j_1}}  - z_{\beta_{j_2}}   )^2 }    \\
&  \times \prod_{ \alpha_\ell \in S_{12}   \setminus \{ \alpha_{j_1}  \} } \frac{1}{2\pi i } \int_{ (-\delta_\ell )} F_{\alpha_{\ell}}(i z_{\alpha_{\ell}}  ) \bigg(  \frac{1}{z_{\alpha_\ell } - z_{\alpha_{j_1}}       }  -    \frac{1}{  z_{\alpha_\ell}  -  z_{\beta_{j_2}}     }  \bigg) dz_{\alpha_\ell}     \\
&  \times  \prod_{ \beta_\ell \in   S_{22} \setminus \{   \beta_{j_2} \} } \frac{1}{2\pi i } \int_{ ( \delta_\ell )}  F_{\beta_{\ell}}(i z_{\beta_{\ell}}   )\bigg(  \frac{1}{z_{\beta_\ell} - z_{\alpha_{j_1}}       }  -    \frac{1}{  z_{\beta_\ell } -  z_{\beta_{j_2}}     }  \bigg)  dz_{\beta_\ell} dz_{\beta_{j_2} }   dz_{\alpha_{j_1}} . }
By Lemma \ref{lemma:Fizw1w2}, $ \mc{J} ( S_{12}, S_{22}) $ becomes
\begin{align*}
      \sum_{ \substack{ j_1 \leq | S_{12}| \\ j_2 \leq | S_{22}| }}   \frac{  1}{(  2 \pi    )^2 } &  \int_{   (-\delta_{j_1}) }  \int_{ (\delta_{j_2} )  }        \frac{  e^{  2 \pi  ( z_{\alpha_{j_1}} - z_{\beta_{j_2}} ) }           }{   ( z_{\alpha_{j_1}}  - z_{\beta_{j_2}}   )^2 }   (-1)^{|S_{12}| - j_1 }  F_{\beta_{j_2}}(i z_{\beta_{j_2}}   )    \int_\R   \hat{F}_{\alpha_{j_1}}(-u_{\alpha_{j_1}}  )     e^{ - 2  \pi u_{\alpha_{j_1}}  z_{\alpha_{j_1}} }    du_{\alpha_{j_1}}    \\
&\times \prod_{\ell<j_1 }\bigg(    \int_{-\infty}^0    \hat{F}_{\alpha_{\ell}}(-u_{\alpha_{\ell}} ) e^{-2 \pi u_{\alpha_{\ell}} z_{\alpha_{j_1}}} du_{\alpha_{\ell}} +  \int_0^\infty   \hat{F}_{\alpha_\ell} (-u_{\alpha_{\ell}}) e^{-2 \pi u_{\alpha_{\ell}} z_{\beta_{j_2}}} du_{\alpha_{\ell}}  \bigg)  \\
& \times \prod_{\ell < j_2 }\bigg(    \int_{-\infty}^0    \hat{F}_{\beta_\ell} (-u_{\beta_\ell}) e^{-2 \pi u_{\beta_\ell} z_{\alpha_{j_1}}} du_{\beta_\ell} +  \int_0^\infty   \hat{F}_{\beta_\ell}(-u_{\beta_\ell}) e^{-2 \pi u_{\beta_\ell} z_{\beta_{j_2}}} du_{\beta_\ell}  \bigg)  \\
& \times \prod_{ j_1 < \ell \leq |S_{12}|  } \bigg( \int_0^\infty   \hat{F}_{\alpha_\ell} (-u_{\alpha_{\ell}}) e^{-2 \pi u_{\alpha_{\ell}} z_{\alpha_{j_1}}} du_{\alpha_{\ell}}-   \int_0^\infty   \hat{F}_{\alpha_\ell} (-u_{\alpha_{\ell}}) e^{-2 \pi u_{\alpha_{\ell}} z_{\beta_{j_2}}} du_{\alpha_{\ell}}    \bigg)  \\
&\times \prod_{j_2 < \ell \leq |S_{22}| } \bigg(    \int_{-\infty}^0    \hat{F}_{\beta_\ell} (-u_{\beta_\ell}) e^{-2 \pi u_{\beta_\ell} z_{\alpha_{j_1}}} du_{\beta_\ell} - \int_{-\infty}^0    \hat{F}_{\beta_\ell} (-u_{\beta_\ell}) e^{-2 \pi u_{\beta_\ell} z_{\beta_{j_2}}} du_{\beta_\ell} \bigg)  dz_{\beta_{j_2} }   dz_{\alpha_{j_1}} . 
\end{align*}
Note that there is no $u_{\beta_{j_2}}$-integral above. After we expand the products and combine the $u_j$-integrals together, the above equals
\begin{align*}
  &    \sum_{ \substack{ j_1 \leq | S_{12}| \\ j_2 \leq | S_{22}| }}   \frac{  1}{(  2 \pi    )^2 }   \int_{   (-\delta_{j_1}) }  \int_{ (\delta_{j_2} )  }        \frac{  e^{  2 \pi  ( z_{\alpha_{j_1}} - z_{\beta_{j_2}} ) }           }{   ( z_{\alpha_{j_1}}  - z_{\beta_{j_2}}   )^2 }      F_{\beta_{j_2}}(i z_{\beta_{j_2}}   )  \summany_{\substack{T_1,W_1, T_2, W_2, T_3, W_3 \\ T_1 + W_1 = \{ \alpha_1,.., \alpha_{j_1 -1} \} \cup \{\beta_1,..., \beta_{  j_2 -1} \} \\ T_2 + W_2 = \{\alpha_{ j_1 + 1},.., \alpha_{|S_{12}|} \} \\ T_3 + W_3 = \{\beta_{j_2  + 1} , ..., \beta_{|S_{22}|}  \} }} (-1)^{ j_1 + |S_{12}|   +|W_2 |+| W_3|}    \\ 
&      \times  \int_{   \mc D_{|S_{12}|+|S_{22}| }(\vec{T}, \vec{W} : \beta_{j_2})    } \left( \prod_{j \in S_{12}\cup S_{22} \setminus \{   \beta_{j_2}     \} }  \widehat F_j(-u_j) \right)   e^{ - 2 \pi ( u_{\alpha_{j_1}}  + u(\vec{T}))z_{\alpha_{j_1}} - 2 \pi     u(\vec{W}) z_{\beta_{j_2}}} d \tilde{\vecu}  dz_{\beta_{j_2} } dz_{\alpha_{j_1}}    , 
\end{align*}
where $ \mc D_{|S_{12}|+|S_{22}| }(\vec{T}, \vec{W} : \beta_{j_2})$ is defined analogously to $ \mc D_{|S_{12}|+|S_{22}| }(\vec{T}, \vec{W}  )$ but without the $u_{\beta_{j_2}}$-coordinate. By changing the order of integrals, we see that
\begin{align*}
 \mc{J} ( S_{12}, S_{22})   =  &    \sum_{ \substack{ j_1 \leq | S_{12}| \\ j_2 \leq | S_{22}| }}  \frac{  1}{   2 \pi  i } \int_{ (\delta_{j_2} )  }          F_{\beta_{j_2}}(i z_{\beta_{j_2}}   )  \summany_{\substack{T_1,W_1, T_2, W_2, T_3, W_3 \\ T_1 + W_1 = \{ \alpha_1,.., \alpha_{j_1 -1} \} \cup \{\beta_1,..., \beta_{  j_2 -1} \} \\ T_2 + W_2 = \{\alpha_{ j_1 + 1},.., \alpha_{|S_{12}|} \} \\ T_3 + W_3 = \{\beta_{j_2  + 1} , ..., \beta_{|S_{22}|}  \} }} (-1)^{ j_1 + |S_{12}|   +|W_2 |+| W_3|}    \\ 
&    \times     \int_{   \mc D_{|S_{12}|+|S_{22}| }(\vec{T}, \vec{W} : \beta_{j_2})    } \left( \prod_{j \in S_{12}\cup S_{22} \setminus \{   \beta_{j_2}     \} }  \widehat F_j(-u_j) \right)    \\
& \times  \frac{  -1}{   2 \pi   i }   \int_{   (-\delta_{j_1}) }         \frac{    e^{ - 2 \pi ( u_{\alpha_{j_1}}  + u(\vec{T})-1 )z_{\alpha_{j_1}} - 2 \pi     ( u(\vec{W})+1)  z_{\beta_{j_2}}}        }{   ( z_{\alpha_{j_1}}  - z_{\beta_{j_2}}   )^2 }  dz_{\alpha_{j_1}} d \tilde{\vecu}  dz_{\beta_{j_2} }     . 
\end{align*}
The last integral is nonzero only when $   u_{\alpha_{j_1}}  + u(\vec{T})>1   $. In such a case, we shift the $z_{\alpha_{j_1}}$-integral to $\infty $ and obtain that the above equals 
\begin{align*}
   &    \sum_{ \substack{ j_1 \leq | S_{12}| \\ j_2 \leq | S_{22}| }}  \frac{  1}{     i } \int_{ (\delta_{j_2} )  }          F_{\beta_{j_2}}(i z_{\beta_{j_2}}   )  \summany_{\substack{T_1,W_1, T_2, W_2, T_3, W_3 \\ T_1 + W_1 = \{ \alpha_1,.., \alpha_{j_1 -1} \} \cup \{\beta_1,..., \beta_{  j_2 -1} \} \\ T_2 + W_2 = \{\alpha_{ j_1 + 1},.., \alpha_{|S_{12}|} \} \\ T_3 + W_3 = \{\beta_{j_2  + 1} , ..., \beta_{|S_{22}|}  \} }} (-1)^{ j_1 + |S_{12}|   +|W_2 |+| W_3|}    \\ 
&    \times     \int_{ \substack{  \mc D_{|S_{12}|+|S_{22}| }(\vec{T}, \vec{W} : \beta_{j_2}) \\  u_{\alpha_{j_1}}  + u(\vec{T})>1  }     } \left( \prod_{j \in S_{12}\cup S_{22} \setminus \{   \beta_{j_2}     \} }  \widehat F_j(-u_j) \right)      ( 1-  u_{\alpha_{j_1}}  - u(\vec{T})  )    e^{ - 2 \pi ( u_{\alpha_{j_1}}  + u(\vec{T}) + u(\vec{W}) )z_{\beta_{j_2}}  }        d \tilde{\vecu}  dz_{\beta_{j_2} }     . 
\end{align*}
We then interchange the order of integrals again and replace the $z_{\beta_{j_2}}$-integral by $\hat{F}_{\beta_{j_2}}$. Thus $ \mc{J} ( S_{12}, S_{22}) $ equals  
\begin{align*}
   &    \sum_{ \substack{ j_1 \leq | S_{12}| \\ j_2 \leq | S_{22}| }}         \summany_{\substack{T_1,W_1, T_2, W_2, T_3, W_3 \\ T_1 + W_1 = \{ \alpha_1,.., \alpha_{j_1 -1} \} \cup \{\beta_1,..., \beta_{  j_2 -1} \} \\ T_2 + W_2 = \{\alpha_{ j_1 + 1},.., \alpha_{|S_{12}|} \} \\ T_3 + W_3 = \{\beta_{j_2  + 1} , ..., \beta_{|S_{22}|}  \} }} (-1)^{ j_1 + |S_{12}|   +|W_2 |+| W_3|}    \\ 
&   \times      \int_{ \substack{  \mc D_{|S_{12}|+|S_{22}| }(\vec{T}, \vec{W} : \beta_{j_2}) \\  u_{\alpha_{j_1}}  + u(\vec{T})>1  }     } \left( \prod_{j \in S_{12}\cup S_{22} \setminus \{   \beta_{j_2}     \} }  \widehat F_j(-u_j) \right)         ( 1-  u_{\alpha_{j_1}}  - u(\vec{T})  )  \hat{ F}_{\beta_{j_2}} (   u_{\alpha_{j_1}}  + u(\vec{T}) + u(\vec{W}) )        d \tilde{\vecu}           \\ 
 =  &    \sum_{ \substack{ j_1 \leq | S_{12}| \\ j_2 \leq | S_{22}| }}         \summany_{\substack{T_1,W_1, T_2, W_2, T_3, W_3 \\ T_1 + W_1 = \{ \alpha_1,.., \alpha_{j_1 -1} \} \cup \{\beta_1,..., \beta_{  j_2 -1} \} \\ T_2 + W_2 = \{\alpha_{ j_1 + 1},.., \alpha_{|S_{12}|} \} \\ T_3 + W_3 = \{\beta_{j_2  + 1} , ..., \beta_{|S_{22}|}  \} }} (-1)^{ j_1 + |S_{12}|   +|W_2 |+| W_3|}    \\ 
&   \times     \int_{ \substack{  \mc D_{|S_{12}|+|S_{22}| }(\vec{T}, \vec{W}  ) \\  u_{\alpha_{j_1}}  + u(\vec{T})>1  }     } \left( \prod_{j \in S_{12}\cup S_{22}   }  \widehat F_j(-u_j) \right)         ( 1-  u_{\alpha_{j_1}}  - u(\vec{T})  )  \delta (    u(S_{12}) + u(S_ {22}) )        d  \vecu,        
\end{align*}
which is the same as  $(-1)^{|S_{12}|+|S_{22}|} \ \mc{I} ( S_{12}, S_{22}) $, finishing the lemma. 

\end{proof}

\subsection{Conclusion of the proof of Proposition \ref{sn:eq1} }
From Equation (\ref{eqn:C1GNG}) and Lemma \ref{lem:J0}, 
\est{ \lim_{Q \rightarrow \infty}	\frac{C_{1,\underline{G}}}{D(\W, Q)}  =   \mc R_0  +  \lim_{Q \rightarrow \infty} \frac{ N_{\underline{G}} }{D(\W, Q)} .}
Then from Equations \eqref{eqn:NGasymp}, (\ref{partitionJ1}) and Lemma \ref{lem:J1}, we obtain that
\est{\lim_{Q \rightarrow \infty} \frac{ N_{\underline{G}} }{D(\W, Q)} = \mc R_1. }
Thus by Equation (\ref{def:R}), we derive at
\est{ \lim_{Q \rightarrow \infty}	\frac{C_{1,\underline{G}}}{D(\W, Q)} = \mc R_0 + \mc R_1 = \mc R}
as desired. 

\section{Acknowledgement}
This work was initially suggested by Brian Conrey during the Arithmetic Statistics MRC program at Snowbird. We would like to thank him for his guidance and provide us useful materials. Also we would also like to thank Xiannan Li for helpful editorial comments, and Sheng-Chi Liu and Maksym Raziwi\l\l \ for discussion during this project. Part of this work was done while the first author was in residence at the Mathematical Sciences Research Institute (MSRI) in Berkeley, California, during the Spring semester of year 2017, supported in part by the National Science Foundation (NSF) under Grant No. DMS-1440140. She also would like to acknowledge support from AMS-Simons Travel grant. The second author has been supported by the National Research Foundation of Korea (NRF) grant funded by the Korea government(MSIP)
(No. 2016R1C1B1008405).  Lastly we are grateful to the anonymous referees, whose comments and suggestions were most helpful in improving the paper.

\end{document}